\newtheorem{Theorem}{Theorem}[section]
\newtheorem{Proposition}[Theorem]{Proposition}
\newtheorem{Lemma}[Theorem]{Lemma}
\newtheorem{Corollary}[Theorem]{Corollary}
\theoremstyle{definition}
\newtheorem{Definition}[Theorem]{Definition}
\newtheorem{Example}[Theorem]{Example}
\newtheorem{Remark}[Theorem]{Remark}
\begin{document}
\title{Volume comparison by timelike Lipschitz maps} 
\author{Hikaru Kubota\thanks{\texttt{u523299e@ecs.osaka-u.ac.jp}\textrm{, Department of Mathematics, Graduate School of Science, University of Osaka, Japan.}}} 
  
\date{\today} 
\maketitle 

\begin{abstract}

In this article, we introduce a modification of the timelike Hausdorff measure $\mathcal{V}^N$ defined by McCann and S\"amann on Lorentzian pre-length spaces. We write the modification of $\mathcal{V}^N$ as $\mathcal{W}^N$. We establish volume comparison inequalities by causality preserving and timelike Lipschitz maps for $\mathcal{V}^N$ and $\mathcal{W}^N$, and discuss basic properties of both $\mathcal{V}^N$ and $\mathcal{W}^N$. Moreover, we show the coincidence of $\mathcal{W}^N$ and $\mathcal{V}^N$ on smooth spacetimes and some Lorentzian pre-length spaces, and construct some examples of timelike Lipschitz maps and causality preserving maps. 
\end{abstract}

\section{Introduction}

 Recently, less regular Lorentz geometry is studied intensively. Such less regular spacetimes naturally appear in general relativity. For example, in \cite{Gal;Gra;Li}, we see a spacetime which exists physically and possesses only $C^0$ regularity. We can find classic results of singularity theorems and these recent advances in low regularity settings in \cite{Ste}. For $C^{1.1}$-metrics, the Hawking singularity theorem, the Penrose singularity theorem, and the Hawking-Penrose singularity theorem were formulated in \cite{Kun;Ste;Vic;HC11}, \cite{Kun;Ste;Vic;PC11}, and \cite{Gra;Gran;Kun;Ste;HPC11}. The first and the second referred above were generalized to $C^1$-metrics in \cite{Gra;TC1}, and the third was generalized in \cite{Kun;Oha;Sch;Ste;HPC1}. We can see basic results for causal structures and geometry of $C^0$-spacetimes in \cite{Lin;C0} and \cite{Chr;Gra;C0} and that there is a unique phenomenon in the $C^0$-case called causal bubble. An important example of causal bubble is found in \cite{Gar;Sou}. In \cite{Gra;Kun;Sa;Ste;future}, the relation of various definitions for causal bubble was studied. As alternative approaches to low regularity Lorentz geometry, closed cone structures, Lorentzian pre-length spaces, and Lorentzian metric spaces were investigated in \cite{Min;cone}, \cite{Kun;Sa;pre-length}, $\cite{Min;Suh1}$.

 In \cite{Kun;Sa;pre-length}, as a synthetic approach to less regular Lorentz geometry, Kunzinger and S\"{a}mann introduced Lorentzian pre-length spaces. A Lorentzian pre-length space is a metric space equipped with the causal and timelike relations, denoted by $\le$ and $\ll$, and a time separation function, denoted by $\tau$. In \cite{Kun;Sa;pre-length}, they generalized causal structures of Lorentzian manifolds to Lorentzian pre-length spaces and defined a condition of timelike curvature bounded below and above by $k\in\mathbb{R}$, denoted by $K\ge(\le)\; k$, as in the case of Alexandrov and CAT spaces. This notion enables us to study less regular Lorentzian geometry in an analogous way to the geometry of metric spaces. For example, for Lorentzian pre-length spaces with $K\ge0$, Beran and S\"{a}mann constructed spaces of directions and tangent cones \cite{Be;Sa;angle}, and Beran, Ohanyan, Rott, and Solis proved a Lorentzian splitting theorem \cite{Be;Oha;Ro;So;splliting}. Moreover, similarly to the geometry of metric measure spaces, we can study the notion of timelike Ricci curvature bounded below and dimension bounded above, called timelike curvature condition. Basic tools and results related to the timelike curvature dimension condition are summarized in \cite{Cav;Mon}, \cite{McC;De}, and \cite{Mon;Su}. See also \cite{Br} and \cite{Br;Oh} for an improved estimate and the Finsler case. In \cite{Mc;Sa}, McCann and S\"{a}mann constructed a Lorentzian analog for Hausdorff measure denoted by $\mathcal{V}^{N}$.

In this work, we introduce a modification $\mathcal{W}^{N}$ of $\mathcal{V}^{N}$. We establish volume comparison inequalities by causality preserving and timelike Lipschitz maps between two Lorentzian pre-length spaces for $\mathcal{V}^{N}$ and $\mathcal{W}^{N}$.

\begin{Definition}[Construction of $\mathcal{W}^{N}$]
Let $(X, d, \le, \ll, \tau)$ be a Lorentzian pre-length space, $A\subseteq X$, $\delta>0$, and  $N\ge0$. We set $\mathcal{W}_{\delta}^{N}(A)$ as
\[\mathcal{W}_{\delta}^{N}(A):=\inf\Biggl{\{} \sum_{i=1}^{\infty} \rho_{N}(J(a_{i},b_{i})) \;\bigg{|}\; J(a_{i},b_{i})\in \mathcal{J}, d(a_{i},b_{i})<\delta \; ,  A\subseteq \bigcup_{i=1}^{\infty} J(a_{i},b_{i})\Biggr{\}},\]
where we set $\mathcal{J} := \{J(p,q)\;|\; p,q\in X\;\mathrm{with}\;p<q\}\cup \{\emptyset\}$, where $J(p,q)=J^{+}(p)\cap J^{-}(q)$ is a causal diamond, and $\rho_{N}(J(p,q)):=\omega_{N}\tau(p,q)^{N}$ with the normalization constant $\omega_{N}\ge1$. Then, we define
\[\mathcal{W}^{N}(A):=\sup_{\delta >0}\mathcal{W}_{\delta}^{N}(A).\]
\end{Definition}
$\linebreak$We can see that $\mathcal{W}^{N}$ becomes a measure on $X$. In the definition of $\mathcal{V}^{N}$, we replace $d(a_{i},b_{i})$ with $\mathrm{diam}_{d}(J(a_{i}, b_{i}))$.

\begin{Definition}[Timelike Lipschitz maps]\label{Def timelike Lipschitz}
Let $X$ and $Y$ be Lorentzian pre-length spaces. Let $\tau_{X}$ and $\tau_{Y}$ be time separations on $X$ and $Y$ and $\lambda>0$. We say that $f:X\to Y$ is a \emph{timelike $\lambda$-Lipschitz map} if 
\[\tau_{Y}(f(p), f(q))\le \lambda\tau_{X}(p, q) \quad\textrm{for all } p,q \in X \textrm{ with } p\ll q\]
holds.
\end{Definition}
Timelike Lipschitz maps were investigated in \cite{Nol} to consider the convergence of spacetimes. Note that for any timelike Lipschitz map $f$, we can see that $f(p) \ll f(q) \Rightarrow p\ll q$ holds.

\begin{Theorem}[Volume comparison inequality for $\mathcal{V}^{N}$]\label{Th Volcom V^n}
Let $(X,d_{X},\le_{X},\ll_{X},\tau_{X})$ and $(Y, d_{Y}, \le_{Y}, \ll_{Y}, \tau_{Y})$ be Lorentzian pre-length spaces. Take $A \subseteq X$. Let $f:X \to Y$ satisfy the following.
\begin{description}
\item{(1)} There exists $\epsilon > 0 $ such that $f$ is uniformly continuous with respect to $d_{X}$ and $d_{Y}$ on $A_ \epsilon$, where  $A_ \epsilon:=\{x\in X| \; d(x, A)\le \epsilon \}$,

\item{(2)} $f$ is timelike $\lambda$-Lipschitz with respect to $\tau_{X}$ and $\tau_{Y}$ on $A_ \epsilon$,

\item{(3)} $f$ is surjective,

\item{(4)} $f$ dually preserves the causal relation, i.e. $p\le q$ $\Leftrightarrow$ $f(p)\le f(q)$ on $X$. 
\end{description}
Then, $\mathcal{V}^{N}(f(A)) \le \lambda^{N} \mathcal{V}^{N}(A)$ holds.
  
\end{Theorem}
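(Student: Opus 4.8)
The plan is to fix a scale $\delta'>0$, transport an arbitrary admissible covering of $A$ at an appropriately chosen smaller scale into an admissible covering of $f(A)$ at scale $\delta'$ with total weight inflated by at most $\lambda^{N}$, and then pass to infima and suprema; since $\mathcal{V}^{N}(f(A))=\sup_{\delta'>0}\mathcal{V}^{N}_{\delta'}(f(A))$ and $\mathcal{V}^{N}_{\delta}(A)\le\mathcal{V}^{N}(A)$ for all $\delta>0$, this will give the inequality. First I would use the uniform continuity in (1) to pick $\delta$ with $0<\delta\le\epsilon$ so small that $d_{X}(x,y)<\delta$ forces $d_{Y}(f(x),f(y))<\delta'/2$ for all $x,y\in A_{\epsilon}$; then $\mathrm{diam}_{d_{X}}(S)<\delta$ implies $\mathrm{diam}_{d_{Y}}(f(S))<\delta'$ for every $S\subseteq A_{\epsilon}$. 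Given an admissible covering $\{J(a_{i},b_{i})\}_{i}\subseteq\mathcal{J}$ of $A$ with $\mathrm{diam}_{d_{X}}(J(a_{i},b_{i}))<\delta$ for each $i$, I would discard those diamonds missing $A$ (this preserves a covering of $A$ and cannot increase $\sum_{i}\rho_{N}(J(a_{i},b_{i}))$), so that each remaining diamond meets $A$ and, having $d_{X}$-diameter $<\delta\le\epsilon$, lies in $A_{\epsilon}$ --- which is exactly what makes (1) and (2) applicable to it.

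The heart of the argument is the set identity $f\bigl(J(a_{i},b_{i})\bigr)=J\bigl(f(a_{i}),f(b_{i})\bigr)$. The inclusion ``$\subseteq$'' is immediate from the forward implication $p\le q\Rightarrow f(p)\le f(q)$ in (4), applied to the pairs $(a_{i},x)$ and $(x,b_{i})$. For ``$\supseteq$'', given $y$ with $f(a_{i})\le y\le f(b_{i})$, surjectivity (3) supplies $x$ with $f(x)=y$, and the reverse implication $f(p)\le f(q)\Rightarrow p\le q$ in (4) yields $a_{i}\le x\le b_{i}$, so $y\in f(J(a_{i},b_{i}))$. Granting this, $\{J(f(a_{i}),f(b_{i}))\}_{i}$ covers $f(A)$, and each member equals $f(J(a_{i},b_{i}))$, hence has $d_{Y}$-diameter $<\delta'$ by the choice of $\delta$; together with the routine verification that $J(f(a_{i}),f(b_{i}))\in\mathcal{J}$ (again via (4), the border case $f(a_{i})=f(b_{i})$ being treated separately), this exhibits an admissible covering of $f(A)$ at scale $\delta'$.

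It remains to compare weights. If $a_{i}\ll b_{i}$, then (2) --- applicable since $a_{i},b_{i}\in A_{\epsilon}$ --- gives $\tau_{Y}(f(a_{i}),f(b_{i}))\le\lambda\,\tau_{X}(a_{i},b_{i})$, hence $\rho_{N}(J(f(a_{i}),f(b_{i})))\le\lambda^{N}\rho_{N}(J(a_{i},b_{i}))$; if $a_{i}<b_{i}$ but $a_{i}\not\ll b_{i}$, then $\tau_{X}(a_{i},b_{i})=0$, while $f(a_{i})\ll f(b_{i})$ would force $a_{i}\ll b_{i}$ by the remark following Definition \ref{Def timelike Lipschitz}, so $\tau_{Y}(f(a_{i}),f(b_{i}))=0$ and the bound holds trivially. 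Summing over $i$ and using that the transported family is admissible at scale $\delta'$,
\[\mathcal{V}^{N}_{\delta'}(f(A))\le\sum_{i}\rho_{N}\bigl(J(f(a_{i}),f(b_{i}))\bigr)\le\lambda^{N}\sum_{i}\rho_{N}\bigl(J(a_{i},b_{i})\bigr).\]
Taking the infimum over admissible $\delta$-coverings of $A$ gives $\mathcal{V}^{N}_{\delta'}(f(A))\le\lambda^{N}\mathcal{V}^{N}_{\delta}(A)\le\lambda^{N}\mathcal{V}^{N}(A)$, and the supremum over $\delta'>0$ yields $\mathcal{V}^{N}(f(A))\le\lambda^{N}\mathcal{V}^{N}(A)$.

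The main obstacle I anticipate is the set identity of the second paragraph: without it, the diamond $J(f(a_{i}),f(b_{i}))$ spanned by the images could be far larger in $d_{Y}$ than $f(J(a_{i},b_{i}))$, so the transported family would not be admissible at the controlled scale $\delta'$ --- and this is precisely why hypotheses (3) and \emph{both} implications of (4) are indispensable, whereas (1) together with the enlargement $A_{\epsilon}$ serves only to let the analytic conditions (1)--(2) be used on the covering diamonds rather than on $A$ alone. The remaining issues --- diamonds missing $A$, empty or degenerate image diamonds, and causal-but-not-timelike endpoint pairs --- are routine bookkeeping.
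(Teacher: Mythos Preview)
Your proposal is correct and follows essentially the same approach as the paper's proof: use uniform continuity on $A_{\epsilon}$ to relate scales, discard diamonds missing $A$ so the remaining ones lie in $A_{\epsilon}$, establish the set identity $f(J(a_{i},b_{i}))=J(f(a_{i}),f(b_{i}))$ via (3) and both directions of (4), transport the covering, and bound weights via (2). Your treatment is in fact slightly more careful than the paper's --- you use the $\delta'/2$ trick to get a strict diameter bound and you explicitly handle the causal-but-not-timelike endpoint case, both of which the paper glosses over.
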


\begin{Theorem}[Volume comparison inequality for $\mathcal{W}^{N}$]\label{Th Volcom W^n}
Let $(X,d_{X}, \le_{X}, \ll_{X}, \tau_{X})$ and $(Y, d_{Y}, \le_{Y}, \ll_{Y}, \tau_{Y})$ be Lorentzian pre-length spaces. Take $A \subseteq X$. Let $f:X \to Y$ satisfy the following. 

\begin{description}

\item{(1)} $f$ is uniformly continuous with respect to $d_{X}$ and $d_{Y}$ on $X$,

\item{(2)} $f$ is timelike $\lambda$-Lipschitz with respect to $\tau_{X}$ and $\tau_{Y}$ on $X$,

\item{(3)} $f$ preserves the causal relation, i.e. $p\le q \Rightarrow f(p) \le f(q)$ on $X$.

\end{description}
Then, $\mathcal{W}^{N}(f(A)) \le \lambda^{N} \mathcal{W}^{N}(A)$ holds.
\end{Theorem}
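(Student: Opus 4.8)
The plan is to push an efficient covering of $A$ by small causal diamonds in $X$ forward along $f$ to a covering of $f(A)$ by small causal diamonds in $Y$, to check that $\rho_N$ contracts by at most $\lambda^N$ under this pushforward, and then to pass to the infimum over coverings and the supremum over the scale parameter. Concretely, I would fix $\delta'>0$ and invoke the uniform continuity hypothesis (1) to choose $\delta>0$ with $d_X(x,y)<\delta\Rightarrow d_Y(f(x),f(y))<\delta'$ for all $x,y\in X$. Given any covering $\{J_X(a_i,b_i)\}_{i\ge 1}$ of $A$ with each $J_X(a_i,b_i)\in\mathcal{J}$ and $d_X(a_i,b_i)<\delta$, I set $D_i:=J_Y(f(a_i),f(b_i))$ and claim that $\{D_i\}_{i\ge 1}$ is a covering of $f(A)$ by elements of $\mathcal{J}$ with $d_Y(f(a_i),f(b_i))<\delta'$ and $\sum_i\rho_N(D_i)\le\lambda^N\sum_i\rho_N(J_X(a_i,b_i))$; note that, in contrast to Theorem~\ref{Th Volcom V^n}, neither surjectivity nor diameter control is required here precisely because $\mathcal{W}^N_\delta$ constrains $d(a_i,b_i)$ rather than $\mathrm{diam}_d(J(a_i,b_i))$.

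Three points establish the claim. First, since $a_i\le_X b_i$, hypothesis (3) gives $f(a_i)\le_Y f(b_i)$, so $D_i$ is a causal diamond in $\mathcal{J}$, and $d_Y(f(a_i),f(b_i))<\delta'$ by the choice of $\delta$. Second, if $x\in J_X(a_i,b_i)$ then $a_i\le_X x\le_X b_i$, hence $f(a_i)\le_Y f(x)\le_Y f(b_i)$ by (3), i.e. $f(x)\in D_i$; therefore $f(A)\subseteq\bigcup_i f(J_X(a_i,b_i))\subseteq\bigcup_i D_i$. Third, for the cost: if $a_i\ll_X b_i$ then (2) gives $\tau_Y(f(a_i),f(b_i))\le\lambda\tau_X(a_i,b_i)$, so $\rho_N(D_i)=\omega_N\tau_Y(f(a_i),f(b_i))^N\le\lambda^N\omega_N\tau_X(a_i,b_i)^N=\lambda^N\rho_N(J_X(a_i,b_i))$; if $a_i$ and $b_i$ are merely causally related, then $\tau_X(a_i,b_i)=0$, and the remark following Definition~\ref{Def timelike Lipschitz} excludes $f(a_i)\ll_Y f(b_i)$ (which would force $a_i\ll_X b_i$), so $\tau_Y(f(a_i),f(b_i))=0$ as well and $\rho_N(D_i)=0=\lambda^N\rho_N(J_X(a_i,b_i))$. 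Granting the claim, taking the infimum over all admissible coverings of $A$ yields $\mathcal{W}^N_{\delta'}(f(A))\le\lambda^N\mathcal{W}^N_{\delta}(A)\le\lambda^N\mathcal{W}^N(A)$, and since $\delta'>0$ was arbitrary, taking the supremum over $\delta'$ gives $\mathcal{W}^N(f(A))\le\lambda^N\mathcal{W}^N(A)$.

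The step I expect to be the main obstacle is the first point above in the degenerate case $f(a_i)=f(b_i)$, in which $D_i$ is not a nondegenerate causal diamond: there $f(J_X(a_i,b_i))$ is confined to the causal equivalence class of $f(a_i)$, which one must still cover admissibly at zero $\rho_N$-cost, relying on the convention on degenerate diamonds implicit in the definition of $\mathcal{J}$ (respectively on the structural hypotheses on the spaces). The remaining bookkeeping — empty diamonds appearing in the covering, and the separate treatment of $N=0$ via the convention on $\rho_0$ — is routine, and throughout one uses only that $\mathcal{W}^N$ is the supremum over $\delta>0$ of the set functions $\mathcal{W}^N_\delta$, so no measurability of $A$ or $f(A)$ enters.
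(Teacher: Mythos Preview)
Your proof is correct and follows essentially the same route as the paper's: push a $\delta$-fine diamond cover of $A$ forward via causal preservation to a $\delta'$-fine diamond cover of $f(A)$, bound $\rho_N$ using the timelike Lipschitz hypothesis, and pass to the limit. The paper parametrizes by $k\in\mathbb{N}$ (with $\delta'=\tfrac{1}{k}$ and $\delta=D(k)$) and restricts to finite covers, but this is cosmetic; your explicit treatment of the non-timelike case $a_i\le b_i$, $a_i\not\ll b_i$ and of the degenerate image $f(a_i)=f(b_i)$ is in fact more careful than the paper's proof, which simply writes $\tau_Y(f(a_i),f(b_i))^N\le\lambda^N\tau_X(a_i,b_i)^N$ without separating cases.
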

\quad

These inequalities are analogs of a well-known volume comparison inequality for the Hausdorff measure by Lipschitz maps between metric spaces \cite[Proposition 1.7.8(4)]{Bra;Bra;Iva}. As a further research for Lorentzian pre-length spaces with $K\ge0$, we could consider a construction of local coordinates as an analog of the well-known result in the geometry of Alexandrov spaces. Gromov-Hausdorff convergence in the Lorentzian setting is also an important problem and has been investigated intensively in \cite{Nol}, \cite{Kun;Ste;NULLd}, \cite{Min;Suh1}, \cite{Mul} etc. Lorentzian analogs of Hausdorff measure as well as the volume comparison inequalities above will play a fundamental role in those future investigations. 

Furthermore, we see the coincidence of $\mathcal{V}^{N}$ and $\mathcal{W}^{N}$. In smooth spacetimes, through the notion of null distance, we will show $\mathrm{vol}^{g}=\mathcal{V}^{N}=\mathcal{W}^{N}$, where $\mathrm{vol}^{g}$ is the volume measure of the Lorentzian metric and $\mathcal{V}^{N}$ and $\mathcal{W}^{N}$ are with respect to the Riemannian distance of any complete Riemannian metric. Moreover, we will also show $\mathcal{V}^{N} = \mathcal{W}^{N}$ on compact and strongly causal continuous spacetimes and specific Lorentzian pre-length spaces called Lorentzian warped products.  

This paper is organized as follows. We summarize basic definitions and properties of Lorentzian pre-length spaces and timelike Hausdorff measure in Section 2. In Section 3 we prove the volume comparison inequality for $\mathcal{V}^{N}$, introduce $\mathcal{W}^{N}$ as a modification of $\mathcal{V}^{N}$, and prove the volume comparison inequality for $\mathcal{W}^{N}$. Moreover, we show basic properties of $\mathcal{V}^{N}$ and $\mathcal{W}^{N}$. In Section 4, we discuss the coincidence of $\mathcal{V}^{N}$ and $\mathcal{W}^{N}$. In Section 5, we construct examples of timelike Lipschitz maps and causality preserving maps. Section 6 is for outlook.

\section*{Acknowledgements}
The author would like to thank Tobias Beran, Felix Rott, Tadashi Fujioka. Beran and Rott gave him some advices and informed him of null distance in a discussion, and Fujioka gave him advices about the geometry of metric spaces. He is also thankful to his supervisor Shin-ichi Ohta. The author got a lot of supports and advices from him in the whole process of writing this paper, especially in correcting proofs, English grammar, and offering informations of related researches. This work is supported by JST SPRING, Grant Number JPMJST2138.

\section{Lorentzian pre-length spaces and timelike Hausdorff measure}

\subsection{Lorentzian pre-length spaces} 

In this subsection, we review basic definitions of Lorentzian pre-length spaces, causality conditions, and the condition of timelike curvature bound based on the paper \cite{Kun;Sa;pre-length}. We will also see examples and find that Lorentzian pre-length spaces generalize smooth spacetimes and causally plain continuous spacetimes.

\begin{Definition} [Lorentzian pre-length space]\label{Def LpLS}
For a metric space $(X, d)$, let $\le$ be a reflexive and transitive relation on $X$, and let $\ll$ be a transitive relation on $X$. Moreover, we assume that $\ll$ is included in $\le$. Let $\tau:X\times X \to [0, \infty ]$ be a lower semi-continuous function with respect to the topology induced by $d$. Additionally, we assume that $\tau$ satisfies the reverse triangle inequality,

\[\tau(x, z) \ge \tau(x,y) + \tau(y, z) ,\]
for all $x,y,z\in X$ with $x \le y \le z$. Moreover, suppose that $\tau(x, y)=0$ if $x \not\le y$
and $\tau(x, y)>0$ if and only if $x\ll y$. Then we call the quintuplet $(X, d, \le, \ll, \tau)$ a \emph{Lorentzian pre-length space}. We call $\le$ and $\ll$ \emph{causal} and \emph{timelike relations}, respectively, and $\tau$ a \emph{time separation function}.
\end{Definition}

\quad

\begin{Example}[Smooth spacetimes]
A smooth spacetime is a Lorentzian pre-length space with its canonical causal and timelike relations, time separation function, and a metric $d_{h}$ induced from an ambient smooth Riemannian metric $h$.

\end{Example}

\begin{Example}[Lorentzian warped products]\label{Ex Lorentzian warped product}
Let $(X,d)$ be a length metric space, $I \subseteq \mathbb{R}$ be an open interval, and $f:I \to (0, \infty)$ be a continuous function. Let $D$ be the product metric on $Y \coloneq I\times X$. Let $\gamma:[a,b]\to Y$ be a curve such that, by writing $\gamma=(\alpha, \beta)$, $\alpha$ and $\beta$ are both absolutely continuous and $\alpha$ is strictly monotonous. Then, we say that $\gamma$ is 

\[\left\{ \begin{aligned}
&timelike \\
&null \\
&causal \\
 \end{aligned} \right.
  \quad \textrm{if}
  \quad -\dot{\alpha}^{2}+(f\circ\alpha)^{2}\upsilon^{2}_{\beta}
  \quad
   \left\{\begin{aligned}
&< \\
&= \;\;0 \\
&\le \\
 \end{aligned}\right.\]
almost everywhere. Here, $\upsilon_{\beta}$ is the metric derivative of $\beta$. $\gamma$ is called future (past) directed if $\alpha$ is strictly increasing (decreasing). The length $L(\gamma)$ is defined as
\[L(\gamma)=\int^{b}_{a}\sqrt{\dot{\alpha}^{2}-(f\circ\alpha)^{2}\upsilon^{2}_{\beta}} \;dt.\]
We define relations $\le$, $\ll$, and a function $\tau:Y\times Y\to [0, \infty]$ as
\[\left\{\begin{aligned}
&x\le y \Leftrightarrow \textrm{there exists a future directed causal curve from $x$ to $y$}\\
&x\ll y \Leftrightarrow \textrm{there exists a future directed timelike curve from $x$ to $y$}
\end{aligned}\right.\]
and
\[\tau(x,y)= \sup\{L(\gamma)\;|\;\textrm{$\gamma$ is a future directed causal curve from $x$ to $y$}\}.\]
Then, $\le$ is reflexive and transitive, and $\ll$ is transitive. Moreover, from Lemma 3.25 in $\cite{Ale;Gra;Kun;Sa;cone}$, we see that $\tau$ is lower semi-continuous with respect to $D$. Therefore, $(Y, D, \le, \ll, \tau)$ is a Lorentzian pre-length space. We write $Y\equiv I\times_{f}X$ and call it a Lorentzian warped product.

\end{Example}

\begin{Definition}[Future and past]\label{Def FuturePast}
For a Lorentzian pre-length space $X$ and $x\in X$, we define \emph{chronological} and \emph{causal future} of $x$ as
\begin{description}

\item{(1)} $\mathit{I}^+(x) := \{y\in X \;|\; x\ll y\}$,

\item{(2)} $\mathit{J}^+(x) := \{y\in X \;|\; x\le y\}$. 
\end{description}
\end{Definition}
We define \emph{chronological} and \emph{causal past} $\mathit{I}^-(x)$ and $\mathit{J}^-(x)$ by flipping the order of relations above. For $x,y\in X$, write $\mathit{I}(x, y) := \mathit{I}^+(x) \cap \mathit{I}^-(y)$, and we can see that 
$\{\mathit{I}(x, y) \;|\; x,y\in X\}$ is a subbase of a topology on $X$. We call this topology the \textit{Alexandrov topology}.

\begin{Example}[$C^0$ as a Lorentzian pre-length space]
Let $(M,g)$ be a continuous and causally plain spacetime i.e. for every $x\in M$, $\mathrm{int}[J^{+}(x)]\backslash \overline{I^{+}(x)}=\emptyset$. Then, $(M,d_{h},\le,\ll,\tau)$ is a Lorentzian pre-length space, where $d_{h}$ is the distance function induced from a fixed complete Riemannian metric $h$ and
\[\tau(x,y):=\sup\{L_{g}(\gamma) \;|\; \gamma \;\textrm{is a causal curve from \emph{x} to \emph{y}}\}.\]
We call $\mathrm{int}[J^{+}(x)]\backslash \overline{I^{+}(x)}$ a \emph{future bubbling}, and a \emph{past bubbling} is defined as its time dual \cite[Definition 4.1]{Lin;C0}. In the case of continuous spacetimes with non-empty future (past) bubbles, the time separation $\tau$ fails to be a lower semi-continuous function in general \cite[Proposition 5.7]{Kun;Sa;pre-length}.

\end{Example}

\quad

\begin{Definition}[Causal and timelike curves]\label{Def CausalTimelike curve}
Let $\gamma :I \to X$ be a non-constant locally Lipschitz continuous curve with respect to $d$ on an interval $I\subseteq \mathbb{R}$. Then, we say that $\gamma$ is a \emph{future-directed causal (timelike)} curve if for all $t_1, t_2 \in I$ with $t_1 <  t_2$, we have $\gamma (t_1) \le(\ll) \; \gamma (t_2)$. We define \emph{past-directed causal (timelike)} curves by flipping the order of causal (timelike) relation.

\end{Definition}

\begin{Definition}[$\tau$-length]\label{Def tau-length}
Let $\gamma :[a,b] \to X$ be a future directed causal curve. Then, we define the $\tau$-\emph{length} of $\gamma$, denoted by $L_\tau(\gamma)$, as
\[L_\tau(\gamma) := \inf \Bigg{\{} \sum_{i = 0}^{L - 1} {\tau(\gamma (t_i), \gamma (t_{i+1}))}
\;\bigg{|}\; L\in \mathbb{N}, a = t_0 \le t_1 \le \cdots \le t_L = b \Bigg{\}}.\]
\end{Definition} 
$\linebreak$We can see that $\tau$-length satisfies
\[L_{\tau}(\gamma|_{[x, z]}) = L_{\tau}(\gamma|_{[x, y]}) + L_{\tau}(\gamma|_{[y, z]}) \quad(\mathrm{for \;all} \;x\le y\le z \in [a,b]),\]
and $\tau$-length is reparametrization invariant. We say that a causal curve $\gamma:[a,b]\to X$ is \emph{a maximal curve} if $L_{\tau}(\gamma)=\tau(\gamma(a),\gamma(b))$. For a maximal curve $\gamma:[a,b]\to X$, its $\tau$-length is larger than or equal to that of any other causal curve from $\gamma(a)$ to $\gamma(b)$.

As in the case of Lorentz manifolds, we can consider causality conditions on Lorentzian pre-length spaces.

\begin{Definition}[Causality conditions]\label{Def causality condition}
A Lorentzian pre-length space \linebreak $(X, d, \le, \ll, \tau)$ is called 
\begin{description}
\item{(1)} \emph{chronological} if $\ll$ is irreflexive, i.e. $x \not\ll x$ $\mathrm{for \;all}$ $x\in X$,

\item{(2)} \emph{causal} if $\le$ is a partial order, i.e. $x\le y$ and $y\le x$ imply $x = y$ $\mathrm{for \;all}$ $x, y\in X$,

\item{(3)} \emph{strongly causal} if the Alexandrov topology agrees with the topology induced by $d$,

\item{(4)} \emph{globally hyperbolic} if the following two conditions hold:
\begin{description}
\item{(i)} For any compact set $K\subseteq X$, there exists $C>0$ such that $L_{d}(\gamma)\le C$ for any causal curve $\gamma$ in $K$ (non-totally imprisoning),

\item{(ii)} $J^+(x) \cap J^-(y)$ is a compact set $\mathrm{for \;all}$ $x, y\in X$.

\end{description}
\end{description}

\end{Definition}

$\linebreak$
There are other causality conditions, e.g. \emph{locally causally closed}, \emph{causally path connected} and \emph{localizable}. With these conditions, we define Lorentzian length spaces corresponding to length spaces in the geometry of metric spaces (see \cite[Subsection 3.4]{Kun;Sa;pre-length}).  

\begin{Remark}[Causal ladder]
We can see that on Lorentzian length spaces, $(4)\Rightarrow(3)\Rightarrow(2)\Rightarrow(1)$ (\cite[Theorem 3.26]{Kun;Sa;pre-length}).  
\end{Remark}

For Lorentzian pre-length spaces, we can define curvature bound from below or above in the same way as Alexandrov and CAT spaces. For $k\in \mathbb{R}$, we define

\[ M_{k} := \left\{ \begin{aligned}
&\tilde{S}_1^2(r) & &k=\frac1{r}>0 \\
&\mathbb{R}_1^2 & &k=0 \\
&\tilde{H}_1^2(r) & &k=-\frac1{r}<0. \\
\end{aligned} \right.\]
Here, $\tilde{S}_1^2(r)$ and $\tilde{H}_1^2(r)$ are simply connected covering manifolds of two-dimensional Lorentzian pseudosphere (\emph{de Sitter space}), and  two-dimensional Lorentzian pseudo hyperbolic space (\emph{anti-de Sitter space}), respectively. Recall a fact about the existence of triangles of timelike maximizers in $M_{k}$ \cite[Lemma 4.6]{Kun;Sa;pre-length}.

\begin{Lemma}[Realizability]\label{Lem Realizablity}
\quad Let $k\in \mathbb{R}$. Let $a, b, c \in\mathbb{R}^3_{+}$ with $c\ge a+b$. When $c=a+b$ and $k>0$, assume $c<\frac{\pi}{\sqrt{k}}$ additionally. When $c>a+b$ and $k<0$, assume $c<\frac{\pi}{\sqrt{-k}}$ additionally. Then, there exists a timelike geodesic triangle in $M_{k}$ with sidelengths $a, b, c$ uniquely. Here, a timelike geodesic triangle is a triple $(x, y, z)\in M_{k}^3$ with $x\ll y\ll z$ and $\tau(x,z)<\infty$ connected by maximal timelike geodesics, $\alpha$ from $x$ to $y$, $\beta$ from $y$ to $z$, $\gamma$ from $x$ to $z$, satisfying $L_{\tau}(\alpha)=\tau(x, y)$, $L_{\tau}(\beta)=\tau(y, z)$, and $L_{\tau}(\gamma)=\tau(x, z)$.

\end{Lemma}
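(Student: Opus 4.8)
\emph{Plan of proof.} The statement concerns only the fixed comparison surfaces $M_{k}$, so the task is to recall its proof. The plan is to use the homogeneity of $M_{k}$ to reduce the construction of the triangle to solving a single equation in one real variable, and then to apply the intermediate value theorem. First I would normalise. Each $M_{k}$ is a geodesically complete, homogeneous Lorentzian surface of constant curvature whose isometry group acts transitively on pairs consisting of a point and a future-directed unit timelike vector, and which contains a reflection fixing any such pointed vector. Thus I may fix $x\in M_{k}$ together with a future-directed, unit-speed timelike geodesic $\gamma$ issuing from $x$, and set $z:=\gamma(c)$. The content of this step is to verify that $\gamma|_{[0,c]}$ is a \emph{maximal} timelike geodesic, so that $\tau(x,z)=c$ and the third side is already in place; this is exactly where the extra size hypotheses enter. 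In $\mathbb{R}_{1}^{2}$ every timelike geodesic is maximal; in anti-de Sitter space ($k<0$) a timelike geodesic segment ceases to be maximal beyond its first conjugate point, which sits at parameter $\pi/\sqrt{-k}$; and in de Sitter space ($k>0$) the restriction $c<\pi/\sqrt{k}$ in the collinear case reflects the analogous behaviour of its timelike geodesics. For the precise description of geodesics in $M_{k}$ one consults \cite{Kun;Sa;pre-length} and the references therein.

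Next I would vary the middle vertex. Since $\dim M_{k}=2$, the future-directed unit timelike vectors at $x$ form a one-parameter family, which I coordinatise by a hyperbolic angle $\omega\in(-\omega_{0},\omega_{0})$ with $\omega=0$ corresponding to $\dot\gamma(0)$, the bound $\omega_{0}\le\infty$ being fixed by the light cone. For each $\omega$ let $\gamma_{\omega}$ be the maximal timelike geodesic from $x$ in the direction of angle $\omega$ and put $y_{\omega}:=\gamma_{\omega}(a)$; since the length $a\le c$ stays below the conjugate-point threshold, $\gamma_{\omega}|_{[0,a]}$ is maximal and $\tau(x,y_{\omega})=a$. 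A timelike geodesic triangle with the prescribed side lengths is exactly a choice of $\omega$ with $y_{\omega}\ll z$, with $\tau(y_{\omega},z)=b$, and with the connecting maximal timelike geodesic from $y_{\omega}$ to $z$. So set $g(\omega):=\tau(y_{\omega},z)$ on the set where $y_{\omega}\ll z$. Then $g$ is continuous, and the law of cosines in $M_{k}$ --- in the flat case the identity $g(\omega)^{2}=a^{2}+c^{2}-2ac\cosh\omega$, and an analogous trigonometric identity, still monotone in $\cosh\omega$, when $k\ne 0$ --- shows that $g$ is strictly decreasing in $|\omega|$. Since $g(0)=\tau(\gamma(a),\gamma(c))=c-a\ge b$ by the reverse triangle inequality $c\ge a+b$, while $g(\omega)\to 0$ as $|\omega|$ approaches the edge of its domain (beyond which $y_{\omega}$ leaves $I^{-}(z)$), the intermediate value theorem produces $\omega$ with $y_{\omega}\ll z$ and $g(\omega)=b$; strict monotonicity in $|\omega|$ makes $\omega$ unique up to sign, i.e.\ up to the reflection isometry fixing $x$ and $\gamma$, which together with the normalisation of Step 1 gives uniqueness of the triangle up to isometries of $M_{k}$. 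By construction $(x,y_{\omega},z)$ has all three sides maximal timelike geodesics of lengths $a$, $b$, $c$, and $\omega=0$ occurs precisely when $c=a+b$, matching the degenerate case.

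\textbf{Main obstacle.} The continuity and monotonicity argument is routine once one is inside a model space; the real content sits in the normalisation of Step 1 and in the curved instances of the law of cosines, that is, in the classical geometry of the de Sitter and anti-de Sitter surfaces: when a timelike geodesic segment is maximal, the exact range of realisable timelike separations, and the monotone dependence of $\tau$ on the angle at $x$ (in the curved cases one must make sure that the size restrictions keep $g$ in its monotone regime). I expect the delicate part to be checking that the conditions $c\ge a+b$, ``$c<\pi/\sqrt{k}$ when $c=a+b$ and $k>0$'', and ``$c<\pi/\sqrt{-k}$ when $c>a+b$ and $k<0$'' are \emph{precisely} those under which these constructions go through; the full verification is carried out in \cite[Lemma 4.6]{Kun;Sa;pre-length}.
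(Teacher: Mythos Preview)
The paper does not give its own proof of this lemma: it is stated as a recalled fact with the citation \cite[Lemma 4.6]{Kun;Sa;pre-length} and no argument is supplied. Your proposal is therefore not to be compared against a proof in the paper but against the cited source; the sketch you give (normalise using homogeneity, vary the middle vertex along a one-parameter family, apply the intermediate value theorem with monotonicity coming from the Lorentzian law of cosines) is the standard route and you correctly point to the same reference for the full verification.
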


\begin{Definition}[Curvature bound from below (above) by $k$]\label{Def timelike curvature bound}
\quad A Lorentzian pre-length space $(X,d,\le,\ll,\tau)$ has \emph{timelike curvature bounded from below (above)} by $k\in\mathbb{R}$ if for all $p\in X$, there exists a neighborhood $U_{p}$ of $p$ such that
\begin{description}
\item{(1)} $\tau\vert_{U_{p}\times U_{p}}$ is finite and continuous,

\item{(2)} for all $x, y\in U_{p}$ with $x\ll y$, there exists a future-direct causal curve $\alpha$ from $x$ to $y$ in $U_{p}$ such that $L_{\tau}(\alpha)=\tau(x,y)$,

\item{(3)} let $(x,y,z)$ be a timelike geodesic triangle in $U_{p}$ whose sides, $\alpha$ from $x$ to $y$, $\beta$ from $y$ to $z$, and $\gamma$ from $x$ to $z$, satisfy the timelike size bound for $k$ in $\mathrm{Lemma} \;\ref{Lem Realizablity}$. Let $(\bar{x},\bar{y},\bar{z})$ be a comparison triangle of $(x,y,z)$ in $M_{k}$, realized by timelike geodesics $\bar{\alpha}$, $\bar{\beta}$, and $\bar{\gamma}$. For any two points, $a$ and $b$, on sides of $(x,y,z)$, take points, $\bar{a}$ and $\bar{b}$,  as corresponding points for $a$ and $b$ on sides of $(\bar{x},\bar{y},\bar{z})$. Then, we have $\tau(a,b)\le \bar\tau(\bar{a},\bar{b})$  \;($\tau(a,b)\ge \bar\tau(\bar{a},\bar{b})$), where $\bar\tau$ is the time separation function of $M_{k}$.

\end{description}

\end{Definition}

We call this neighborhood, $U_{p}$, a \emph{comparison neighborhood} of $p$. The equivalence between the timelike curvature bound as above and the corresponding sectional curvature bound in time direction in smooth spacetimes is discussed in \cite{Be;Ku;Oha;Ro}.

\subsection{ McCann and S\"{a}mann's timelike Hausdorff measure} 

In \cite{Mc;Sa}, McCann and S\"{a}mann constructed a Lorentzian analog for Hausdorff measure that we call the \emph{timelike Hausdorff measure}, $\mathcal{V}^{N}$. In this subsection, we recall the definition of timelike Hausdorff measure and its basic properties. In particular, we see that the timelike Hausdorff measure $\mathcal{V}^{N}$ with respect to the Riemannian distance of any complete Riemannian metric coincides with the volume measure $\mathrm{vol}^{g}$ of Lorentzian metric $g$ on strongly causal and causally plain continuous spacetimes with dimension $N$.

\begin{Definition}[Volume of causal diamonds]
Let $N\ge0$. For a Lorentzian pre-length space $(X,d,\le,\ll,\tau)$, set $J(x,y)=J^{+}(x)\cap\; J^{-}(y)$ for $x,y\in X$ with $x\le y$ such that $\tau(x,y)<\infty$, and define 
\[\rho_{N}(J(x,y))\coloneq\omega_{N}\tau(x,y)^{N},\]
where $\omega_{N}\coloneq\frac{\pi^{\frac{N-1}{2}}}{N\Gamma(\frac{N+1}{2})2^{N-1}}$ and $\Gamma(x):=\int_0^{\infty} t^{x-1}e^{-t}dt$ is the Euler's gamma function.

\end{Definition}

\begin{Remark}
We can see that for an integer $N\ge2$ and $x,y\in X$ with $\tau(x,y)<\infty$, 
\[\rho_{N}(J(x,y))=\mathrm{vol}^{\mathbb{R}_{1}^{N}}(\tilde{J}(\tilde{x},\tilde{y})),\]
where $\tilde{x}$ and $\tilde{y}$ are points in the $N$-dimensional Minkowski space $\mathbb{R}_{1}^{N}$ such that $\tilde{\tau}(\tilde{x},\tilde{y})=\tau(x,y)$. Recall that the above equation holds independently from the choice of $\tilde{x}$ and $\tilde{y}$.

\end{Remark}

\begin{Definition}[Timelike Hausdorff measure $\mathcal{V}^{N}$]\label{Def V^n}
Let $(X,d,\le,\ll,\tau)$ be a Lorentzian pre-length space and $N\ge0$. Set $\mathcal{J} \coloneq \{J(x,y)\;|\; x,y\in X,\;x<y\}\cup \{\emptyset\}$. By setting $\rho_{N}(\emptyset):=0$ and $\rho_{N}(J(x,y)):=\infty$ when $\tau(x,y)=\infty$, we extend $\rho_{N}: \mathcal{J} \to [0,\infty ]$. For $\delta >0$ and $A \subseteq X$, we define

\[\mathcal{V}_{\delta}^{N}(A):=\inf\Bigg{\{} \sum_{i=1}^{\infty} \rho_{N}(A_{i}) \;\bigg{|}\; A_{i}\in \mathcal{J}, \mathrm{diam}_{d}(A_{i})<\delta,  A\subseteq \bigcup_{i=1}^{\infty} A_{i}\Bigg{\}},\]
with $\inf\emptyset =\infty$. Moreover, we define

\[\mathcal{V}^{N}(A):=\sup_{\delta>0} \mathcal{V}_{\delta}^{N}(A).\]
\end{Definition}

We can see that $\mathcal{V}^{N}$ is a Borel measure on $X$ similarly to the case of Hausdorff measures on metric spaces.

\begin{Remark} 
We can restrict countable covers in Definition \ref{Def V^n} to finite coverings \cite[Proposition 3.8]{Mc;Sa}. In fact set a Borel outer measure $\tilde{\mathcal{V}}_{\delta}^{N}$ on $X$ as
\[\tilde{\mathcal{V}}_{\delta}^{N}(A):=\inf\Bigg{\{} \sum_{i=1}^{L} \rho_{N}(A_{i}) \;\bigg{|}\; A_{i}\in \mathcal{J}, \mathrm{diam}(A_{i})<\delta,  A\subseteq \bigcup_{i=1}^{L} A_{i}\Bigg{\}},\]
then we have $\tilde{\mathcal{V}}_{\delta}^{N}=\mathcal{V}_{\delta}^{N}$ for all $\delta$. Therefore, $\tilde{\mathcal{V}}^{N}=\mathcal{V}^{N}$ follows.

\end{Remark}

\begin{Definition}[Timelike Hausdorff dimension]\label{Def Timelike Hausdorff dimension}
Let $(X,d,\le,\ll,\tau)$ be a Lorentian pre-length space and $\{\mathcal{V}^{N}\}_{N\ge 0}$ be the family of timelike Hausdorff measures. For $B\subseteq X$, we define its \emph{timelike Hausdorff dimension}, denoted by  $\dim^{\tau}(B)$, as

\[\dim^{\tau}(B):=\inf\{ N \ge0 \;|\; \mathcal{V}^{N}(B)<\infty \},\]
with $\inf\emptyset =\infty$.

\end{Definition}

With an additional assumption, \emph{local d-uniformity}, we see that the timelike Hausdorff dimension of $B\subseteq X$ exists uniquely.

\begin{Definition}[Local $d$-uniformity]
For a Lorentzian pre-length space $(X,d,\le,\ll,\tau)$, we say that $X$ is \emph{locally $d$-uniform} if every point $p\in X$ has a neighborhood, $U_{p}$, such that $\tau(x,y)=o(1)$ on $U_{p}$ as $d(x,y)\to 0$.

\end{Definition}

In \cite[Lemmas 3.3, 3.5]{Mc;Sa} and \cite[Theorem 4.8]{Mc;Sa}, we see the uniqueness of the timelike Hausdorff dimension and the coincidence of $\mathcal{V}^{N}$ and $\mathrm{vol}^{g}$ on continuous causally plain and strongly causal spacetimes of dimension $N$.
\begin{Theorem}[Uniqueness of dimension]\label{Th uniqueness of dimension v^n}
Let $(X,d,\le,\ll,\tau)$ be a Lorentzian pre-length space, $A\subseteq B\subseteq X$ and $0\le k_1 <\dim^{\tau}(B)<k_2<\infty$. Then, we have

\begin{description}
\item{(1)} $\dim^{\tau}(A)\le \dim^{\tau}(B)$,

\item{(2)} $\mathcal{V}^{k_1}(B)=\infty$,

\item{(3)} if $(X,d,\le,\ll,\tau)$ is locally $d$-uniform, then $\mathcal{V}^{k_2}(B)=0$.

\end{description}

\end{Theorem}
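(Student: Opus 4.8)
\textbf{Proof plan for Theorem \ref{Th uniqueness of dimension v^n}.}

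The plan is to mimic the classical argument for Hausdorff measure and dimension on metric spaces, transporting each step to the setting of $\rho_N$ and causal diamonds. First I would establish the key monotonicity-in-$N$ estimate at the level of the approximating set functions: if $\mathcal{V}_\delta^{k_2}(B) < \infty$ for some $k_2 > k_1 \ge 0$, then for any admissible cover $\{A_i\}$ of $B$ by diamonds of $d$-diameter $< \delta$ one compares $\sum_i \rho_{k_1}(A_i)$ with $\sum_i \rho_{k_2}(A_i)$. Writing $\rho_N(A_i) = \omega_N \tau(x_i, y_i)^N$, the obstruction is that $\tau(x_i, y_i)$ is controlled not by $\operatorname{diam}_d(A_i)$ directly but must be bounded in terms of it; this is exactly where local $d$-uniformity would enter for part (3), giving $\tau(x_i,y_i) \le \eta(\delta)$ with $\eta(\delta) \to 0$. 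Then $\rho_{k_2}(A_i) = \tfrac{\omega_{k_2}}{\omega_{k_1}} \tau(x_i,y_i)^{k_2 - k_1} \rho_{k_1}(A_i) \le C \eta(\delta)^{k_2-k_1} \rho_{k_1}(A_i)$ locally, which upon summing and taking infima yields $\mathcal{V}_\delta^{k_2}(B) \le C\,\eta(\delta)^{k_2-k_1}\, \mathcal{V}_\delta^{k_1}(B)$, hence $\mathcal{V}^{k_2}(B) = 0$ whenever $\mathcal{V}^{k_1}(B) < \infty$; contrapositively, $\mathcal{V}^{k_1}(B) = \infty$ whenever $\mathcal{V}^{k_2}(B) > 0$. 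Since $k_1 < \dim^\tau(B)$ means $\mathcal{V}^{k_1}(B) = \infty$ is not yet assumed, I would instead use the definition: for $k_1 < \dim^\tau(B)$ there is no witness making $\mathcal{V}^{k_1}(B)$ finite, so $\mathcal{V}^{k_1}(B) = \infty$, which is part (2); and for $k_2 > \dim^\tau(B)$ some $N \in (\dim^\tau(B), k_2)$ has $\mathcal{V}^N(B) < \infty$, whence the displayed inequality forces $\mathcal{V}^{k_2}(B) = 0$, giving part (3).

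For part (1), monotonicity of $\dim^\tau$ under inclusion, the argument is purely formal: for $A \subseteq B$ every admissible cover of $B$ is an admissible cover of $A$, so $\mathcal{V}_\delta^N(A) \le \mathcal{V}_\delta^N(B)$ for all $\delta$ and all $N$, hence $\mathcal{V}^N(A) \le \mathcal{V}^N(B)$; therefore $\{N : \mathcal{V}^N(B) < \infty\} \subseteq \{N : \mathcal{V}^N(A) < \infty\}$ and taking infima gives $\dim^\tau(A) \le \dim^\tau(B)$. This requires no curvature or uniformity hypothesis.

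I expect the main obstacle to be the bookkeeping in part (2), namely justifying $\mathcal{V}^{k_1}(B) = \infty$ rigorously from $k_1 < \dim^\tau(B)$. By definition of $\dim^\tau(B)$ as an infimum, $k_1 < \dim^\tau(B)$ says $k_1$ does not belong to $\{N \ge 0 : \mathcal{V}^N(B) < \infty\}$, hence $\mathcal{V}^{k_1}(B) = \infty$ directly — so this is in fact immediate once one unwinds the definition, and the only subtlety is that the set $\{N : \mathcal{V}^N(B) < \infty\}$ need not be an interval a priori; but part (3) together with part (2) shows it is, in the locally $d$-uniform case, an interval with left endpoint $\dim^\tau(B)$ (the value at $\dim^\tau(B)$ itself being indeterminate), which is the content of "uniqueness of the dimension." The genuinely delicate point is the localization in the comparison $\rho_{k_2} \le C\eta(\delta)^{k_2-k_1}\rho_{k_1}$: the constant $C = \omega_{k_2}/\omega_{k_1}$ is global, but $\eta(\delta)$ and the neighborhoods $U_p$ from local $d$-uniformity are local, so one must pass to a countable cover of $B$ by such neighborhoods, run the estimate on each piece, and use countable subadditivity of $\mathcal{V}^{k_2}$ to conclude $\mathcal{V}^{k_2}(B) = 0$; for this one also needs $\delta$ small enough that diamonds of diameter $< \delta$ meeting a given piece are contained in the corresponding $U_p$, which is where one would invoke a Lebesgue-number-type argument or simply restrict attention to $\delta \to 0$ and diamonds centered in each piece.
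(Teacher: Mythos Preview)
Your proposal is correct. The paper does not give its own proof of this theorem---it is quoted from \cite[Lemmas 3.3, 3.5]{Mc;Sa}---but it does prove the analogous statement for $\mathcal{W}^N$ (Theorem~\ref{Th timelike dimension W^n}), and that argument is exactly yours: for part (3), pick $k_3 \in [\mathrm{Dim}^\tau(B), k_2)$ with finite $k_3$-measure, take a near-optimal cover, write $\rho_{k_2} = \tfrac{\omega_{k_2}}{\omega_{k_3}}\,\tau^{k_2-k_3}\rho_{k_3}$, and send $\tau \to 0$ via the uniformity hypothesis. The one difference is that the $\mathcal{W}^N$ version assumes $\tau$ globally uniformly continuous, so no localization is needed; your observation that for $\mathcal{V}^N$ under mere local $d$-uniformity one must first pass to a countable cover by $d$-uniform neighborhoods and use subadditivity is the right additional step and is what the cited source does. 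Your handling of (1) and (2) directly from the definition of $\dim^\tau$ as an infimum is also correct.
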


\begin{Theorem}
  Let $(X,d,\le,\ll,\tau)$ be a locally $d$-uniform Lorentzian pre-length space, and $X=\bigcup_{i\in\mathbb{N}}U_{i}$. Then,
  \[\dim^{\tau}(X)=\sup_{i\in\mathbb{N}}\dim^{\tau}(U_{i}).\]
\end{Theorem}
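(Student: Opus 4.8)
\medskip
\noindent\emph{Proof idea.}
This is the Lorentzian analogue of the countable stability of the Hausdorff dimension, and the plan is to prove the two inequalities separately. The inequality $\sup_{i\in\mathbb{N}}\dim^{\tau}(U_{i})\le\dim^{\tau}(X)$ is immediate from monotonicity: since $U_{i}\subseteq X$, Theorem~\ref{Th uniqueness of dimension v^n}(1) gives $\dim^{\tau}(U_{i})\le\dim^{\tau}(X)$ for each $i$, and taking the supremum over $i$ yields the claim. All the content is therefore in the reverse inequality $\dim^{\tau}(X)\le\sup_{i\in\mathbb{N}}\dim^{\tau}(U_{i})$.

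For the reverse inequality, set $s:=\sup_{i\in\mathbb{N}}\dim^{\tau}(U_{i})$ and assume $s<\infty$ (otherwise there is nothing to prove). I would fix an arbitrary $N>s$ and show $\mathcal{V}^{N}(X)=0$. Since $\dim^{\tau}(U_{i})\le s<N$ for every $i$ and $X$ is locally $d$-uniform, the vanishing of the timelike Hausdorff measure above the dimension threshold (Theorem~\ref{Th uniqueness of dimension v^n}(3)) gives $\mathcal{V}^{N}(U_{i})=0$ for every $i$. Because $\mathcal{V}^{N}=\sup_{\delta>0}\mathcal{V}^{N}_{\delta}$ is obtained from infima over countable covers, it is an outer measure, and in particular countably subadditive on arbitrary subsets (no measurability of the $U_{i}$ is needed), so
\[\mathcal{V}^{N}(X)=\mathcal{V}^{N}\Bigl(\bigcup_{i\in\mathbb{N}}U_{i}\Bigr)\le\sum_{i\in\mathbb{N}}\mathcal{V}^{N}(U_{i})=0.\]
Thus $\mathcal{V}^{N}(X)<\infty$, and Definition~\ref{Def Timelike Hausdorff dimension} gives $\dim^{\tau}(X)\le N$; letting $N$ decrease to $s$ yields $\dim^{\tau}(X)\le s$, which together with the first step proves the equality.

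The step I expect to require the most care — the only genuine obstacle — is that Theorem~\ref{Th uniqueness of dimension v^n}(3) as stated asks for a $k_{1}$ with $0\le k_{1}<\dim^{\tau}(U_{i})$, which is vacuous in the boundary case $\dim^{\tau}(U_{i})=0$. This restriction is an artefact of bundling parts (1)--(3) under common hypotheses; what is really used is the finiteness-to-vanishing principle behind the dimension results of \cite{Mc;Sa} — under local $d$-uniformity, $\mathcal{V}^{s}(A)<\infty$ forces $\mathcal{V}^{t}(A)=0$ for every $t>s$ — which carries no such restriction, and since $\dim^{\tau}(U_{i})<N$ means $\mathcal{V}^{N'}(U_{i})<\infty$ for some $N'<N$ by Definition~\ref{Def Timelike Hausdorff dimension}, it applies in all cases. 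Its mechanism is the scaling identity $\rho_{N}(J(a,b))=\tfrac{\omega_{N}}{\omega_{N'}}\,\tau(a,b)^{N-N'}\,\rho_{N'}(J(a,b))$ together with local $d$-uniformity, which drives $\tau(a,b)^{N-N'}$ uniformly to $0$ along increasingly fine diamond covers; I would cite this estimate rather than re-derive it. With it in place, the remainder is routine bookkeeping with countable subadditivity and the cited dimension results.
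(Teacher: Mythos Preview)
Your proposal is correct and follows the standard route (monotonicity for one inequality, countable subadditivity of the outer measure $\mathcal{V}^{N}$ together with the finiteness-to-vanishing principle for the other), which is precisely the argument of Lemma~3.5 in \cite{Mc;Sa} that the paper cites rather than reproves. Your attention to the boundary case $\dim^{\tau}(U_{i})=0$ and the bundled hypotheses of Theorem~\ref{Th uniqueness of dimension v^n} is well placed; the underlying scaling estimate you identify is exactly what drives the result and carries no such restriction.
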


\begin{Theorem}[$\mathrm{vol}^{g}=\mathcal{V}^{N}$]\label{Th Volg=V^n}
Let $(M,g)$ be a strongly causal, continuous and causally plain spacetime of dimension $N$ and $\mathrm{vol}^{g}$ be the volume measure of the Lorentzian metric $g$ on $M$. Then, $\mathrm{vol}^{g}=\mathcal{V}^{N}$ holds. Here, $\mathcal{V}^{N}$ is with respect to the Riemannian distance of any complete Riemannian metric on $M$.

\end{Theorem}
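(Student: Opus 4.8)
The plan is to prove the two inequalities $\mathcal{V}^{N}(A)\le\mathrm{vol}^{g}(A)$ and $\mathcal{V}^{N}(A)\ge\mathrm{vol}^{g}(A)$ for every Borel set $A\subseteq M$, both of which are consequences of a single local fact: on sufficiently small scales a causal diamond $J(x,y)$ of $(M,g)$ has essentially the same $g$-volume as the corresponding causal diamond in Minkowski space $\mathbb{R}_{1}^{N}$. The constant $\omega_{N}$ is normalised precisely so that $\rho_{N}(J(p,q))$ equals the Minkowski volume of a diamond of the same time separation, as recorded in the Remark after the definition of $\rho_{N}$, so what has to be shown is that $\mathrm{vol}^{g}$ is asymptotically Minkowskian on shrinking causal diamonds. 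Since $\mathrm{vol}^{g}$ and $\mathcal{V}^{N}$ are Borel measures and $M$ is second countable, after covering $M$ by countably many precompact coordinate charts it suffices to argue on a single such chart; in particular I would assume throughout that $\overline{A}$ is compact and contained in a chart and that $\mathrm{vol}^{g}(A)<\infty$ (otherwise the lower bound already forces $\infty=\infty$).

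\textbf{Key local estimate.} For each $p\in M$ and $\epsilon>0$ there are a neighbourhood $U_{p}$ and $\delta_{p}>0$ such that
\[(1-\epsilon)\,\omega_{N}\,\tau(x,y)^{N}\ \le\ \mathrm{vol}^{g}\!\bigl(J(x,y)\bigr)\ \le\ (1+\epsilon)\,\omega_{N}\,\tau(x,y)^{N}\]
whenever $x\le y$, $J(x,y)\subseteq U_{p}$ and $\mathrm{diam}_{d}J(x,y)<\delta_{p}$. I would prove this by a blow-up argument: in coordinates centred at $p$, given a sequence of causal diamonds $J_{k}$ collapsing to $p$, dilate by their $d$-diameters $r_{k}\to0$; continuity of $g$ makes the rescaled metrics $g(r_{k}\,\cdot\,)$ converge uniformly on compacta to the constant Minkowski metric $g_{p}$, and by the $C^{0}$ causality theory \cite{Chr;Gra;C0} the rescaled diamonds then converge, in the Hausdorff sense and in measure, to a genuine $g_{p}$-causal diamond while $\tau$ rescales to the Minkowski time separation, so that $\mathrm{vol}^{g}(J_{k})/\omega_{N}$ and $\tau(x_{k},y_{k})^{N}$ have the same leading order. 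Causal plainness is essential at exactly this point: it is the hypothesis ruling out future or past bubbling, so that $J(x,y)$ does not enclose extra volume beyond $\overline{I(x,y)}$ (in particular $\mathrm{vol}^{g}(J(x,y))=0$ when $\tau(x,y)=0$); strong causality keeps the local causal picture tame (small diamonds lie in small neighbourhoods) so the blow-up is meaningful. In substance this is \cite[Theorem 4.8]{Mc;Sa}, together with \cite[Lemmas 3.3, 3.5]{Mc;Sa}.

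\textbf{The two inequalities.} Fix $\epsilon>0$ and choose $\delta>0$ small enough that every causal diamond of $d$-diameter $<\delta$ meeting $A$ lies in one of finitely many neighbourhoods $U_{p_{j}}$ (covering $\overline{A}$) on which the key estimate holds with error $\epsilon$. For the lower bound, any admissible cover $A\subseteq\bigcup_{i}J(a_{i},b_{i})$ with $\mathrm{diam}_{d}J(a_{i},b_{i})<\delta$ (discarding the diamonds disjoint from $A$) satisfies $\rho_{N}(J(a_{i},b_{i}))\ge(1-\epsilon)\,\mathrm{vol}^{g}(J(a_{i},b_{i}))$, so $\sum_{i}\rho_{N}(J(a_{i},b_{i}))\ge(1-\epsilon)\sum_{i}\mathrm{vol}^{g}(J(a_{i},b_{i}))\ge(1-\epsilon)\,\mathrm{vol}^{g}(A)$; taking the infimum over such covers gives $\mathcal{V}^{N}_{\delta}(A)\ge(1-\epsilon)\,\mathrm{vol}^{g}(A)$, hence $\mathcal{V}^{N}(A)\ge(1-\epsilon)\,\mathrm{vol}^{g}(A)$, and $\epsilon\to0$ yields $\mathcal{V}^{N}(A)\ge\mathrm{vol}^{g}(A)$. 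For the upper bound, by outer regularity pick an open $\Omega\supseteq A$ with $\mathrm{vol}^{g}(\Omega)<\mathrm{vol}^{g}(A)+\epsilon$. The causal diamonds $J(x,y)\subseteq\Omega$ with $\mathrm{diam}_{d}J(x,y)<\delta$ satisfying the key estimate form a fine cover of $A$ (every point lies in arbitrarily small chronological diamonds), and each such diamond is sandwiched between two concentric $d$-balls of uniformly bounded radius ratio on the chart, so a Besicovitch-type covering theorem gives a countable pairwise disjoint subfamily $\{J_{i}\}$ with $\mathrm{vol}^{g}(A\setminus\bigcup_{i}J_{i})=0$; the residual $\mathrm{vol}^{g}$-null set sits in an open subset of $\Omega$ of arbitrarily small $\mathrm{vol}^{g}$-measure and is therefore covered by further small regular diamonds of total $\rho_{N}$-mass $<\epsilon$. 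Hence $\mathcal{V}^{N}_{\delta}(A)\le\sum_{i}\rho_{N}(J_{i})+\epsilon\le(1+\epsilon)\sum_{i}\mathrm{vol}^{g}(J_{i})+\epsilon\le(1+\epsilon)\,\mathrm{vol}^{g}(\Omega)+\epsilon$, and since this holds for all sufficiently small $\delta$ we get $\mathcal{V}^{N}(A)\le(1+\epsilon)(\mathrm{vol}^{g}(A)+\epsilon)+\epsilon$; letting $\epsilon\to0$ gives $\mathcal{V}^{N}(A)\le\mathrm{vol}^{g}(A)$. The resulting identity $\mathrm{vol}^{g}=\mathcal{V}^{N}$ makes no reference to the auxiliary Riemannian metric, so independence of $\mathcal{V}^{N}$ from the choice of complete Riemannian metric is automatic.

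\textbf{Main obstacle.} The crux is the key local estimate, and within it the behaviour of the diamonds under the blow-up when $g$ is only continuous: one must show, uniformly over all sufficiently small diamonds in a chart rather than along one sequence, that causal plainness forces $J(x,y)$ to converge in measure to a Minkowski diamond and that $\tau(x,y)$ reproduces its Minkowski height to leading order. The Besicovitch covering argument and the measure-theoretic bookkeeping are then routine once the bounded eccentricity of causal diamonds relative to $d$-balls is noted.
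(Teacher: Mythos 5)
The paper does not prove this statement at all: Theorem \ref{Th Volg=V^n} is recalled verbatim from McCann and S\"amann and the text explicitly attributes it to \cite[Theorem 4.8]{Mc;Sa} (together with \cite[Lemmas 3.3, 3.5]{Mc;Sa}), so there is no in-paper argument to compare against. Judged as a standalone proof, your sketch has the right architecture --- it is essentially the architecture of the cited proof (localize to small neighbourhoods where $g$ is close to Minkowski, compare $\mathrm{vol}^{g}(J(x,y))$ with $\omega_{N}\tau(x,y)^{N}$ there, then run covering arguments in both directions) --- but the crucial step is not actually supplied. Your ``key local estimate'' is precisely the hard content of the theorem, and at that point you write that it ``is in substance \cite[Theorem 4.8]{Mc;Sa}'', which is circular. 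The blow-up heuristic does not obviously deliver what you need for the lower bound $\mathcal{V}^{N}\ge\mathrm{vol}^{g}$: there you cannot choose the cover, so the estimate $\mathrm{vol}^{g}(J(x,y))\le(1+\epsilon)\omega_{N}\tau(x,y)^{N}$ must hold uniformly over \emph{all} small diamonds, including pairs $x\le y$ that are nearly null. Rescaling such a diamond by its $d$-diameter produces a degenerate limit, and naive metric comparison (e.g.\ sandwiching $g$ between scaled Minkowski metrics $\eta_{C^{-1}}\prec g\prec\eta_{C}$ on a cylindrical neighbourhood) loses all multiplicative control of $\tau$ in the nearly null regime, since a pair can be $\eta_{C}$-timelike while $g$-null. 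Handling this uniformly in the eccentricity is exactly where causal plainness and the real work of \cite{Mc;Sa} enter, and your sketch does not address it.

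There is also a concrete false claim in your upper-bound argument: a small causal diamond is \emph{not} in general sandwiched between concentric $d$-balls of uniformly bounded radius ratio. For a nearly null pair in Minkowski space, $J(x,y)$ is an arbitrarily thin sliver whose $d$-diameter stays of order one while $\tau(x,y)\to0$, so the family of all small diamonds has unbounded eccentricity and the Besicovitch/Morse covering theorem does not apply to it as stated. This part is repairable: for the inequality $\mathcal{V}^{N}\le\mathrm{vol}^{g}$ you get to choose the cover, so restrict the fine cover to ``round'' diamonds (say, with vertices obtained by flowing along the $\partial_t$-direction of a cylindrical neighbourhood as in Lemma \ref{Lem cylindrical neighborhood}), for which bounded eccentricity does hold and the Vitali-type exhaustion goes through. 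With that repair, and with the key local estimate either proved or honestly quoted, your outline would reproduce the McCann--S\"amann argument; as written, both the covering step and the central estimate have genuine gaps.
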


\section{Volume comparison inequality} 

In the geometry of metric spaces, volume comparison by Lipschitz maps is a fundamental yet important fact \cite[Proposition 1.7.8 (4)]{Bra;Bra;Iva}. It plays a role to show that all open sets have the same Hausdorff dimension for Alexandrov spaces, and that is a necessary step to construct local coordinates on them. In addition,  the volume comparison is a fundamental tool in the convergence theory of Riemannian manifolds. Theorem \ref{Th Volcom V^n} and Theorem \ref{Th Volcom W^n} can be regarded as Lorentzian analogs of that volume comparison.

\subsection{Volume comparison inequality for $\mathcal{V}^{N}$}

First, we establish the volume comparison inequality between Lorentzian pre-length spaces by a timelike Lipschitz map for $\mathcal{V}^{N}$.

$\linebreak$\textbf{Theorem \ref{Th Volcom V^n}} \textit{Let $(X, d_{X}, \le_{X}, \ll_{X}, \tau_{X})$ and $(Y, d_{Y}, \le_{Y}, \ll_{Y}, \tau_{Y})$ be Lorentzian pre-length spaces. Take $A \subset X$. Let $f:X \to Y$ satisfy the following.}
\begin{description}
\item{\textit{(1)}} \textit{There exists $\epsilon > 0 $ such that $f$ is uniformly continuous  with respect to $d_{X}$ and $d_{Y}$ on $A_{\epsilon}$, where  $A_ \epsilon:=\{x\in X| \; d_{X}(x, A)\le \epsilon \}$,}

\item{\textit{(2)}} \textit{$f$ is $\lambda$-timelike Lipschitz with respect to $\tau_{X}$ and $\tau_{Y}$ on $A_ \epsilon$,}

\item{\textit{(3)}} \textit{$f$ is surjective,}

\item{\textit{(4)}} \textit{$f$ dually preserves the causal relation, i.e. $p\le q$ $\Leftrightarrow$ $f(p)\le f(q)$ on $X$.}

\end{description}
\textit{Then, $\mathcal{V}^{N}(f(A)) \le \lambda^{N} \mathcal{V}^{N}(A)$ holds.}

\begin{proof} Take $\epsilon >0$ that satisfies the conditions $(1)$ and $(2)$. For $k\in \mathbb{N}$, take $D(k)\in (0,\frac{1}{k})$ such that for any $p,q\in A_{\epsilon}$ with $d_{X}(p,q)<D(k)$, $d_{Y}(f(p),f(q))<\frac{1}{k}$ holds. Take $k >\frac{1}{\epsilon}$ and any covering $\{J(a_{i}, b_{i})\}_{i=1}^{L}$ of $A$ such that
\[\left\{ \begin{aligned} &J(a_{i}, b_{i})\cap A \not= \emptyset, \\
                          &\mathrm{diam}_{d_{X}}(J(a_{i}, b_{i}))<D(k)<\epsilon. \\
             \end{aligned} 
             \right.
             \]
Then, $J(a_{i},b_{i})\subset A_{\epsilon}$, and we can see from $(3)$ and $(4)$ that $f(J(a_{i}, b_{i}))=J(f(a_{i}), f(b_{i}))$ holds. Indeed, since $x\le y$ implies $f(x)\le f(y)$, $f(J(a_{i}, b_{i}))\subseteq J(f(a_{i}), f(b_{i}))$ holds. To get the reverse inclusion, take any $y\in J(f(a_{i}), f(b_{i}))$. Since $f$ is surjective and is dually preserving the causal relation, there exists $x\in X$ such that $f(x)=y$ and $x\in J(a_{i}, b_{i})$ hold. Then, it follows that $y\in f(J(a_{i}, b_{i}))$, and we have $J(f(a_{i}), f(b_{i}))\subseteq f(J(a_{i}, b_{i}))$. 

Hence, $\{J(f(a_{i}), f(b_{i}))\}_{i=1}^{L}$ is a finite covering of $f(A)$ and satisfies

\[\left\{ \begin{aligned} &J(f(a_{i}), f(b_{i}))\cap f(A) \not= \emptyset, \\
                          &\mathrm{diam}_{d_{Y}}(J(f(a_{i}), f(b_{i})))<\frac{1}{k}\;. \\
             \end{aligned} 
             \right.
             \]
Then, we have
 \[\begin{aligned}
 \mathcal{V}_{\frac{1}{k}}^{N}(f(A))\le \sum_{i=1}^{L} \rho_{N}(J(f(a_{i}), f(b_{i}))
                                    &=\sum_{i=1}^{L} \omega_{N}\tau_{Y}(f(a_{i}),f(b_{i}))^{N}\\
                                    &\le \sum_{i=1}^{L} \omega_{N}\lambda^{N}\tau_{X}
                                    (a_{i},b_{i})^{N}.
  \end{aligned}\]
Taking the infimum over all $\{J(a_{i}, b_{i})\}_{i=1}^{L}$ as above, we have
\[\mathcal{V}_{\frac{1}{k}}^{N}(f(A)) \le \lambda^{N} \mathcal{V}_{D(k)}^{N}(A),\]
and letting $k \to \infty$, we have $\mathcal{V}^{N}(f(A)) \le \lambda^{N} \mathcal{V}^{N}(A)$.
\end{proof}

\begin{Corollary}
Let $X$ and $Y$ be Lorentzian pre-length spaces and $f:X\to Y$ satisfy the conditions in Theorem \ref{Th Volcom V^n}. Then
\[\dim^{\tau}(f(A))\le \dim^{\tau}(A)\]
holds.
\end{Corollary}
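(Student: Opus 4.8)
The plan is to deduce this directly from Theorem~\ref{Th Volcom V^n} together with the definition of the timelike Hausdorff dimension (Definition~\ref{Def Timelike Hausdorff dimension}), so almost all of the work has already been done. First I would dispose of the trivial case: if $\dim^{\tau}(A)=\infty$ there is nothing to prove, so assume $\dim^{\tau}(A)<\infty$. Set $S:=\{N\ge 0 \mid \mathcal{V}^{N}(A)<\infty\}$. Since $\dim^{\tau}(A)=\inf S$ and $\inf\emptyset=\infty$, the assumption $\dim^{\tau}(A)<\infty$ forces $S\neq\emptyset$.

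Next, fix an arbitrary $N\in S$. Because $f$ satisfies hypotheses (1)--(4) of Theorem~\ref{Th Volcom V^n}, we have $\mathcal{V}^{N}(f(A))\le \lambda^{N}\mathcal{V}^{N}(A)$. As $\lambda>0$ and $N\ge 0$, the factor $\lambda^{N}$ is a finite constant, and $\mathcal{V}^{N}(A)<\infty$ by the choice $N\in S$; hence $\mathcal{V}^{N}(f(A))<\infty$. By Definition~\ref{Def Timelike Hausdorff dimension} this means $\dim^{\tau}(f(A))\le N$. Since $N\in S$ was arbitrary, taking the infimum over $S$ gives $\dim^{\tau}(f(A))\le \inf S=\dim^{\tau}(A)$, as claimed.

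I do not expect a genuine obstacle here: the entire substance is the volume comparison inequality of Theorem~\ref{Th Volcom V^n}, which is already proved, and the remainder is bookkeeping with the definition of the dimension. The only points demanding a little care are the edge case $\dim^{\tau}(A)=\infty$ (handled above) and the observation that one must restrict attention to exponents $N$ with $\mathcal{V}^{N}(A)<\infty$; for $N$ with $\mathcal{V}^{N}(A)=\infty$ the inequality $\mathcal{V}^{N}(f(A))\le\lambda^{N}\mathcal{V}^{N}(A)$ is vacuous and carries no information. An equivalent but slightly longer route would be to pick a sequence $N_{j}\downarrow \dim^{\tau}(A)$ with $\mathcal{V}^{N_{j}}(A)<\infty$, conclude $\dim^{\tau}(f(A))\le N_{j}$ for each $j$, and let $j\to\infty$; the infimum formulation above makes even that unnecessary.
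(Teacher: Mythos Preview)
Your argument is correct and is precisely the standard deduction the paper has in mind: the corollary is stated without proof immediately after Theorem~\ref{Th Volcom V^n}, and your use of the inequality $\mathcal{V}^{N}(f(A))\le\lambda^{N}\mathcal{V}^{N}(A)$ together with Definition~\ref{Def Timelike Hausdorff dimension} is exactly the intended one-line justification.
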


\begin{Corollary}
Assume that $\tau_{Y}$ is uniformly continuous on $Y$ with respect to $d_{Y}$. Then, we can weaken the condition (3) in Theorem \ref{Th Volcom V^n} to
\[f(X)\supset \bigcup \bigg{\{}J(x,y) \;\bigg{|}\; J(x,y)\cap f(A) \not= \emptyset, \tau_{Y}(x,y)\le L\bigg{\}}\]
for some $L>0$.
\end{Corollary}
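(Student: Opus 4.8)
The plan is to rerun the proof of Theorem~\ref{Th Volcom V^n} almost word for word and to isolate the single place where surjectivity (condition~(3)) was used. Inspecting that proof, (3) enters only to establish the reverse inclusion $J(f(a_{i}),f(b_{i}))\subseteq f(J(a_{i},b_{i}))$: given $y\in J(f(a_{i}),f(b_{i}))$ one needs a preimage $x\in X$ with $f(x)=y$, after which dual preservation of the causal relation (condition~(4)) yields $a_{i}\le x\le b_{i}$, i.e. $x\in J(a_{i},b_{i})$, hence $y\in f(J(a_{i},b_{i}))$. So it suffices to show that, under the weakened hypothesis, every diamond $J(f(a_{i}),f(b_{i}))$ occurring in a sufficiently fine covering of $A$ is contained in $f(X)$.

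Concretely, I would fix $\epsilon>0$ from (1)--(2), and for $k\in\mathbb{N}$ choose $D(k)\in(0,1/k)$ exactly as in the original proof, so that $p,q\in A_{\epsilon}$ with $d_{X}(p,q)<D(k)$ forces $d_{Y}(f(p),f(q))<1/k$. Let $\{J(a_{i},b_{i})\}_{i=1}^{L}$ be a finite covering of $A$ with $J(a_{i},b_{i})\cap A\neq\emptyset$ and $\mathrm{diam}_{d_{X}}(J(a_{i},b_{i}))<D(k)<\epsilon$; as before $J(a_{i},b_{i})\subseteq A_{\epsilon}$ and $f(J(a_{i},b_{i}))\subseteq J(f(a_{i}),f(b_{i}))$. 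Two observations then put each $J(f(a_{i}),f(b_{i}))$ into the family $\{J(x,y)\mid J(x,y)\cap f(A)\neq\emptyset,\ \tau_{Y}(x,y)\le L\}$. First, picking $a\in J(a_{i},b_{i})\cap A$ gives $f(a)\in f(A)\cap J(f(a_{i}),f(b_{i}))$, so the diamond meets $f(A)$. Second, by uniform continuity of $\tau_{Y}$ there is $\delta_{L}>0$ with $d_{Y}(p,p')+d_{Y}(q,q')<\delta_{L}\Rightarrow|\tau_{Y}(p,q)-\tau_{Y}(p',q')|<L$; since $d_{Y}(f(a_{i}),f(b_{i}))<1/k$ and $\tau_{Y}(f(a_{i}),f(a_{i}))=0$ (the reverse triangle inequality forces $\tau_{Y}(y,y)=0$ for all $y$), once $1/k<\delta_{L}$ we get $\tau_{Y}(f(a_{i}),f(b_{i}))<L$. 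Hence for all large $k$ every $J(f(a_{i}),f(b_{i}))\subseteq f(X)$ by the weakened hypothesis, the reverse inclusion above goes through, $f(J(a_{i},b_{i}))=J(f(a_{i}),f(b_{i}))$, and $\{J(f(a_{i}),f(b_{i}))\}_{i=1}^{L}$ is a finite $\frac1k$-fine covering of $f(A)$ with
\[\mathcal{V}_{1/k}^{N}(f(A))\le\sum_{i=1}^{L}\omega_{N}\,\tau_{Y}(f(a_{i}),f(b_{i}))^{N}\le\lambda^{N}\sum_{i=1}^{L}\omega_{N}\,\tau_{X}(a_{i},b_{i})^{N}.\]
Taking the infimum over all such coverings of $A$ gives $\mathcal{V}_{1/k}^{N}(f(A))\le\lambda^{N}\mathcal{V}_{D(k)}^{N}(A)$, and $k\to\infty$ yields $\mathcal{V}^{N}(f(A))\le\lambda^{N}\mathcal{V}^{N}(A)$.

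The only real obstacle is the second observation above: I need $\tau_{Y}(f(a_{i}),f(b_{i}))\le L$ for every diamond in the covering at once, and the threshold on $1/k$ guaranteeing this must be uniform in $i$ -- which is exactly why the hypothesis requires uniform, not merely pointwise, continuity of $\tau_{Y}$ (with pointwise continuity the threshold could degenerate along the family). Beyond that, the argument is a line-by-line repetition of Theorem~\ref{Th Volcom V^n}, so I would present it tersely, flagging only these two modifications and reusing the choice of $D(k)$, the covering bookkeeping, and the final inequality chain verbatim.
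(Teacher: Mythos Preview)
Your proposal is correct and follows essentially the same route as the paper's own proof: both arguments take $k$ large enough so that the composed uniform continuity of $f$ and of $\tau_{Y}$ forces $\tau_{Y}(f(a_{i}),f(b_{i}))<L$, which together with $J(f(a_{i}),f(b_{i}))\cap f(A)\neq\emptyset$ places each image diamond inside $f(X)$ and restores the reverse inclusion needed in the proof of Theorem~\ref{Th Volcom V^n}. Your write-up is in fact more careful than the paper's terse sketch (you explicitly verify the intersection with $f(A)$ and justify $\tau_{Y}(y,y)=0$), so there is nothing to correct.
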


\begin{proof}
Since $f$ and $\tau_{Y}$ are uniformly continuous on $X$ and $Y$ with respect to $d_{X}$ and $d_{Y}$, taking $k$ in the proof of Theorem \ref{Th Volcom V^n} sufficiently large, we have $\tau_{Y}(f(x),f(y))<L$ for all $x,y\in X$ such that $d_{X}(x,y)<D(k)$.
\end{proof}

$\linebreak$Let $X$ be a Lorenzian pre-length space and $A$ be a subset of $X$. With the restriction of the distance function, the causal relations, and the time separation of $X$, $A$ is a Lorentzian pre-length space as well. Set the timelike Hausdorff measure on $A$ with restrictions as $\mathcal{V'}^{N}$. Then, as opposed to the case of the Hausdorff measure on metric spaces, we can see $\mathcal{V}^{N}(A)\not= \mathcal{V'}^{N}(A)$ in general. For example, take the spacelike line segment $l=\{t=0, 0\le x\le1\}$ in the 2-dimensional Minkowski space $\mathbb{R}^{2}_{1}$ and we have  $\mathcal{V}^{1}(l)\le1$ and $\mathcal{V'}^{1}(l)=\infty$. Nonetheless we can see that for any relatively compact open subset $A$ in a strongly causal and causally path connected Lorentzian pre-length space $X$, $\mathcal{V}^{N}(A)= \mathcal{V'}^{N}(A)$ holds.

\begin{Definition}\cite[Definition 3.1]{Kun;Sa;pre-length}
  A Lorentzian pre-length space is \emph{causally path connected} if any $x,y\in X$ with $x\ll y$ $(x<y)$ are connected by a timelike (causal) curve.
\end{Definition}

\begin{Proposition}\label{Pro V^n(A)=V'^N(A)}
  Let $(X, d, \le, \ll, \tau)$ be a Lorentzian pre-length space and $A\subseteq X$ be open. Then, $\mathcal{V'}^{N}(A)\le\mathcal{V}^{N}(A)$ holds. Moreover, if $A$ is relatively compact and $X$ is strongly causal and causally path connected, $\mathcal{V}^{N}(A)\le\mathcal{V'}^{N}(A)$ holds. Here, $\mathcal{V'}^{N}$ is the timelike Hausdorff measure on $(A, d\vert_A, {\le}\vert_A, {\ll}\vert_A, \tau\vert_A)$.
\end{Proposition}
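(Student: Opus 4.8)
The plan is to compare the two measures $\mathcal{V}^N$ and $\mathcal{V'}^N$ by comparing the covering families used to define them. The inequality $\mathcal{V'}^N(A)\le\mathcal{V}^N(A)$ should be the easy direction: given a covering of $A$ by causal diamonds $J^X(a_i,b_i)$ of $X$ (with the $d$-diameter bound), I would intersect each with $A$ and observe that $J^X(a_i,b_i)\cap A$ contains the corresponding causal diamond $J^A(a_i',b_i')$ of the sub-Lorentzian-pre-length space whenever $a_i,b_i$ can be taken inside $A$; more carefully, since $\rho_N$ is computed from $\tau$ and $\tau\vert_A$ agrees with $\tau$ on pairs in $A$, and since $J^A(p,q)\subseteq J^X(p,q)$ always, any admissible $A$-covering gives an admissible $X$-covering with the same $\rho_N$-sum, hence $\mathcal{V'}^N_\delta(A)\ge$ is the wrong direction — I should instead note $J^X(p,q)\cap A\supseteq J^A(p,q)$ and that one can refine an $X$-covering of $A$ into an $A$-covering by replacing each $J^X(a_i,b_i)$ that meets $A$; the subtlety is that $J^X(a_i,b_i)\cap A$ need not itself be a causal diamond of $A$. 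This is the point where openness of $A$ enters: I would first reduce $\mathcal{V'}^N_\delta(A)$ to coverings by diamonds $J^A(p,q)$ with $p,q\in A$.

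For the reverse inequality $\mathcal{V}^N(A)\le\mathcal{V'}^N(A)$ under the extra hypotheses, the key step is: given an admissible covering of $A$ by diamonds $J^A(a_i,b_i)$ (computed in $A$), I need to produce an admissible covering of $A$ by diamonds $J^X(a_i,b_i)$ of $X$ with comparable (indeed equal, since $\tau\vert_A=\tau$ on such pairs) $\rho_N$-values and with the correct $d$-diameter bound. The obstacle is that $J^X(a_i,b_i)$ may be strictly larger than $J^A(a_i,b_i)$ and may fail the diameter bound $\mathrm{diam}_{d}<\delta$, or may even leave $A$. To control this I would use strong causality and causal path connectedness together with relative compactness: on the compact closure $\overline A$, strong causality lets me choose, around each point, a neighborhood basis of "causally convex" Alexandrov-type neighborhoods $I^X(u,v)$ contained in $A$, so that for points $a_i,b_i$ in such a neighborhood the ambient diamond $J^X(a_i,b_i)$ is already contained in $A$ and its $d$-diameter is small. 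Then a Lebesgue-number / finite-subcover argument on $\overline A$ produces a uniform scale below which every $J^A$-diamond appearing in an admissible $A$-covering is actually equal to the corresponding $X$-diamond — or at least sandwiched between two $X$-diamonds of nearly the same $\tau$-value, using causal path connectedness to realize the relevant causal relations by curves staying in $A$.

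Concretely, the order of operations I would follow is: (i) prove $\mathcal{V'}^N(A)\le\mathcal{V}^N(A)$ by refining an ambient covering, using openness of $A$ to pass to $A$-diamonds; (ii) for the converse, fix $\delta>0$ and use strong causality plus relative compactness of $A$ to extract, via a Lebesgue number argument on $\overline A$, a scale $\eta=\eta(\delta)>0$ such that any causal diamond of $A$ of $d$-diameter $<\eta$ coincides with the corresponding ambient diamond of $X$ and still has $d$-diameter $<\delta$; (iii) given an $\eta$-admissible $A$-covering realizing $\mathcal{V'}^N_\eta(A)$ up to $\varepsilon$, transport it to a $\delta$-admissible $X$-covering of $A$ with the same $\rho_N$-sum, yielding $\mathcal{V}^N_\delta(A)\le\mathcal{V'}^N_\eta(A)+\varepsilon\le\mathcal{V'}^N(A)+\varepsilon$; (iv) let $\delta\to0$ (so $\eta\to0$) and $\varepsilon\to0$.

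The main obstacle I anticipate is step (ii): showing that small ambient causal diamonds do not "stick out" of $A$ and that the two notions of diamond agree at small scales. This is precisely where strong causality (to get causally convex neighborhoods) and causal path connectedness (so that $x\le y$ in $A$ is witnessed by a causal curve in $A$, guaranteeing $J^A(x,y)\supseteq J^X(x,y)\cap A$ for nearby points) must be combined; relative compactness is needed to make the chosen scale uniform over $A$. If the clean equality $J^A=J^X$ at small scales is too much to hope for, the fallback is to bound $\rho_N(J^X(a_i,b_i))$ by $\rho_N(J^A(\tilde a_i,\tilde b_i))$ for a slightly enlarged $A$-diamond whose $\tau$-value exceeds $\tau(a_i,b_i)$ by at most a factor $(1+\delta)$, and absorb the factor $(1+\delta)^N$ in the limit $\delta\to0$.
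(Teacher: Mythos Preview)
Your outline for the harder direction $\mathcal{V}^{N}(A)\le\mathcal{V'}^{N}(A)$ is essentially the paper's argument: exhaust $A$ by the compacts $D_n=\{a\in A: d(a,A^c)\ge 1/n\}$, use strong causality to produce around each point a neighborhood $\beta\subset B_d(p,1/(6n))$ with the property that any causal curve with both endpoints in $\beta$ stays in $B_d(p,1/(6n))$, take a finite subcover of $D_{2n}$ with Lebesgue number $\delta_n$, and then use causal path connectedness to see that any ambient diamond $J^X(a,b)$ with $a,b\in\beta$ lies entirely in the ball. This forces $J^X(a,b)\subset A$ (hence $J^X(a,b)=J^A(a,b)$) and controls its $d$-diameter, which is exactly your step~(ii). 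Your ``fallback'' with a $(1+\delta)^N$ factor is not needed.

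Your treatment of the easy direction $\mathcal{V'}^{N}(A)\le\mathcal{V}^{N}(A)$ is muddled, and the confusion stems from one point you should pin down: since $\le\vert_A$ is the \emph{restriction} of $\le$, for any $p,q\in A$ one has the equality
\[
J^A(p,q)=\{x\in A:\,p\le x\le q\}=J^X(p,q)\cap A,
\]
not just an inclusion. So there is no ``subtlety'' about $J^X(a_i,b_i)\cap A$ being a causal diamond of $A$---it is one, provided $a_i,b_i\in A$. The only obstruction is therefore that the vertices of a small $X$-diamond meeting $A$ need not lie in $A$. Openness alone does not fix this; what works (and what the paper does) is again the exhaustion by $D_n$: if $\mathrm{diam}_d(J^X(a_i,b_i))<\delta<\tfrac{1}{2n}$ and $J^X(a_i,b_i)\cap D_n\neq\emptyset$, then $a_i,b_i\in A$, so $J^A(a_i,b_i)=J^X(a_i,b_i)\cap A$ has $d\vert_A$-diameter $<\delta$ and the same $\rho_N$-value, giving $\mathcal{V'}^{N}_\delta(D_n)\le\mathcal{V}^{N}_\delta(D_n)$ and hence the claim by letting $n\to\infty$. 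Your sentence ``I would first reduce $\mathcal{V'}^{N}_\delta(A)$ to coverings by diamonds $J^A(p,q)$ with $p,q\in A$'' is vacuous (that is automatic for $A$-coverings) and does not address the actual issue on the $X$-side.
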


\begin{proof}
First, we show that $\mathcal{V'}^{N}(A)\le\mathcal{V}^{N}(A)$ holds. We take a sequence of closed subsets $\{D_{n}\}_{n\in \mathbb{N}}\subset A$ as
\[D_{n}=\Bigg{\{}a\in A\;\bigg{|}\; d(a, A^{c})\ge \frac{1}{n}\Bigg{\}}.\]
Since $D_{n}\uparrow A$, we see that it is sufficient for $\mathcal{V'}^{N}(A)\le\mathcal{V}^{N}(A)$ to show
\[\mathcal{V'}^{N}(D_{n})\le\mathcal{V}^{N}(D_{n}).\]
Let $n\in\mathbb{N}$ and $0<\delta<\frac{1}{2n}$. Take any covering $\{J(a_{i}, b_{i})\}_{i\in \mathbb{N}}$ of $D_{n}$ which satisfies that $\mathrm{diam}_{d}(J(a_{i}, b_{i}))<\delta$ and $J(a_{i}, b_{i})\cap D_{n}\not=\emptyset$. Since $d(a_{i}, D_{n})<\frac{1}{2n}$ and $d(b_{i}, D_{n})<\frac{1}{2n}$, we have $a_{i}, b_{i}\in A$, and $\mathrm{diam}_{d}(J(a_{i}, b_{i})\cap A) \le\mathrm{diam}_{d}(J(a_{i}, b_{i}))<\delta$. Therefore, with $\tau(a_{i}, b_{i})=\tau\vert_A(a_{i}, b_{i})$, we have
\[\mathcal{V'}^{N}_{\delta}(D_{n})\le\mathcal{V}^{N}_{\delta}(D_{n}).\]
Then, letting $\delta\to 0$, $\mathcal{V'}^{N}(D_{n})\le\mathcal{V}^{N}(D_{n})$ follows. We can see that this inequality holds when $\mathcal{V'}^{N}(A)=\infty$.

Next, we assume that $X$ is strongly causal and causally path connected and $A\subseteq X$ is relatively compact. To get $\mathcal{V}^{N}(A)\le\mathcal{V'}^{N}(A)$, for $p\in A$ and $n\in \mathbb{N}$, we set $B_{p,n}\coloneq B_{d}(p,\frac{1}{n})$ and $\beta_{p,n}\subseteq B_{p,n}$ as an open neighborhood of $p$ such that any causal curve $\gamma:[a,b]\to X$ with $\gamma(a),\gamma(b)\in \beta_{p,n}$ satisfies $\gamma([a,b])\subset B_{p,n}$. 

Let $n\in\mathbb{N}$ be sufficiently large. Then, we take a finite covering $\{\beta_{p_{k},6n}\}_{k=1}^{L}$ of $D_{2n}$ and the Lebesgue number $\delta_{n}\in(0,\frac{1}{2n})$ of the covering. Take a causal diamond $J(a, b)$ such that $a, b\in A$, $J(a,b)\cap D_{n}\not=\emptyset$ and $\mathrm{diam}_{d}(J(a,b)\cap A)<\delta_{n}$. Then, we have $a,b\in D_{2n}$ and in particular $a,b\in\beta_{p_{k},6n}$ for some $k$. Then, since $X$ is causally path connected, for any $q\in J(a,b)$ we can take future-directed causal curves $\gamma_{1}:[0,1]\to X$ from $a$ to $q$ and $\gamma_{2}:[0,1]\to X$ from $q$ to $b$. By connecting $\gamma_{1}$ and $\gamma_{2}$, we can see that there is a future-directed causal curve from $a$ to $b$ which includes $q$. In particular, we have $q\in B_{p_{k}, 6n}$. Therefore, we have $J(a,b)\subseteq B_{p_{k}, 6n}$ and $\mathrm{diam}_{d}(J(a,b))<\frac{1}{3n}$ holds. Since $J(a,b)\cap D_{n}\not=\emptyset$, $J(a, b)\subset A$ holds, and we have $\mathrm{diam}_{d}(J(a,b)\cap A) =\mathrm{diam}_{d}(J(a,b))$.

Let $\epsilon\in(0,\delta_{n})$. Take any covering $\{J(a_{i},b_{i})\}_{i\in\mathbb{N}}$ of $D_{n}$ such that $a_{i},b_{i}\in A$, $J(a_{i},b_{i})\cap D_{n}\not=\emptyset$, and $\mathrm{diam}_{d}(J(a_{i},b_{i})\cap A)<\epsilon$. Then, with $\mathrm{diam}_{d}(J(a_{i},b_{i})\cap A) =\mathrm{diam}_{d}(J(a_{i},b_{i}))$ and $\tau(a_{i}, b_{i})=\tau\vert_A(a_{i}, b_{i})$, we have
\[\mathcal{V}^{N}_{\epsilon}(D_{n})\le\mathcal{V'}^{N}_{\epsilon}(D_{n}),\]
and letting $\epsilon\to 0$, we have $\mathcal{V}^{N}(D_{n})\le\mathcal{V'}^{N}(D_{n}).$
\end{proof}

\subsection{A modification of $\mathcal{V}^{N}$}

The condition $(3)$ in Theorem \ref{Th Volcom V^n} is not desirable to construct volume comparison maps. To remove the condition, we modify the definition of timelike Hausdorff measure $\mathcal{V}^{N}$ by replacing the diameter of causal diamonds with the distance of their vertices. In this subsection, we see the basic properties of $\mathcal{W}^{N}$.

\begin{Definition}\label{Def W^n}
For a Lorentzian pre-length space $X$, $A\subseteq X$, and $\delta>0$, we set $\mathcal{W}_{\delta}^{N}(A)$ as
  \[\mathcal{W}_{\delta}^{N}(A):=\inf\Bigg{\{} \sum_{i=1}^{\infty} \rho_{N}(J(a_{i},b_{i})) \;\bigg{|}\; J(a_{i},b_{i})\in \mathcal{J}, d(a_{i},b_{i})<\delta,  A\subseteq \bigcup_{i=1}^{\infty} A_{i}\Bigg{\}},\]
with $\inf\emptyset =\infty$. Moreover, we define

\[\mathcal{W}^{N}(A):=\sup_{\delta>0} \mathcal{W}_{\delta}^{N}(A).\]
Here, $\rho_{N}$ and $\mathcal{J}$ are defined as in Definition \ref{Def V^n}.

\end{Definition}

We see that $\mathcal{W}^{N}$ is a measure on $X$. From the definitions of $\mathcal{V}^{N}$ and $\mathcal{W}^{N}$, we have $\mathcal{W}^{N}\le\mathcal{V}^{N}$ for $N\ge0$. Similarly to $\mathcal{V}^{N}$, we can reduce countable coverings in Definition \ref{Def W^n} to finite coverings. Moreover, we can define timelike 
Hausdorff dimension for $\mathcal{W}^{N}$, denoted by $\mathrm{Dim}^{\tau}$. Since $\mathcal{W}^{N}\le\mathcal{V}^{N}$, we have $\mathrm{Dim}^{\tau}(A)\le \mathrm{dim}^{\tau}(A)$ for any $A\subseteq X$. 

Let $A\subseteq X$, $\epsilon, \delta>0$, and $J(x,y)$ be any causal diamond such that $J(x, y)\cap A\not=\emptyset$ and $d(x,y)<\delta$. Then, in general, $x$ and $y$ are not necessarily in $A_{\epsilon}$ for any sufficiently small $\delta>0$. Therefore, for $\mathcal{W}^{N}$, the uniqueness of dimension does not hold generally even on locally $d$-uniform Lorentzian pre-length spaces. For Lorentzian pre-length spaces with uniformly continuous time separation, we have the uniqueness of timelike Hausdorff dimension for $\mathcal{W}^{N}$.

\begin{Theorem}\label{Th timelike dimension W^n}
  Let $(X,d,\le,\ll,\tau)$ be a Lorentzian pre-length space, $A\subseteq B\subseteq X$ and $0\le k_1 <\mathrm{Dim}^{\tau}(B)<k_2<\infty$. Then, we have

\begin{description}
\item{(1)} $\mathrm{Dim}^{\tau}(A)\le \mathrm{Dim}^{\tau}(B)$,

\item{(2)} $\mathcal{W}^{k_1}(B)=\infty$,

\item{(3)} if $\tau$ is uniformly continuous, then $\mathcal{W}^{k_2}(B)=0$ holds.

\end{description}

\end{Theorem}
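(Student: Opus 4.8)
The statement mirrors Theorem~\ref{Th uniqueness of dimension v^n} (uniqueness of dimension for $\mathcal{V}^N$), so the plan is to adapt that argument to $\mathcal{W}^N$, paying attention to where the replacement of $\mathrm{diam}_d$ by $d(a_i,b_i)$ in the covers forces changes. For part (1), if $\mathrm{Dim}^\tau(A)>\mathrm{Dim}^\tau(B)$ were possible, pick $N$ with $\mathrm{Dim}^\tau(B)<N<\mathrm{Dim}^\tau(A)$; then $\mathcal{W}^N(B)<\infty$, but since $A\subseteq B$ every admissible cover of $B$ is an admissible cover of $A$, so $\mathcal{W}^N_\delta(A)\le\mathcal{W}^N_\delta(B)$ for all $\delta$, giving $\mathcal{W}^N(A)\le\mathcal{W}^N(B)<\infty$ and hence $\mathrm{Dim}^\tau(A)\le N$, a contradiction. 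This monotonicity in the set is immediate from the definition and needs no extra hypothesis.

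For part (2), the key is the standard scaling comparison between $\mathcal{W}^{k_1}_\delta$ and $\mathcal{W}^{k_2}_\delta$ for $k_1<k_2$. Given an admissible $\delta$-cover $\{J(a_i,b_i)\}$ of $B$, one has $\rho_{k_2}(J(a_i,b_i))=\omega_{k_2}\tau(a_i,b_i)^{k_2}$, and I want to bound $\tau(a_i,b_i)$ in terms of $\delta$ in order to factor out a power of $\delta$. Here the first real subtlety appears: for $\mathcal{V}^N$ one controls $\tau(a_i,b_i)\le\mathrm{diam}_d(J(a_i,b_i))\cdot(\text{something})$ via local $d$-uniformity applied inside the diamond, but for $\mathcal{W}^N$ the vertices $a_i,b_i$ are only constrained by $d(a_i,b_i)<\delta$, with no a priori control on points of $A$ near the diamond. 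So I would instead argue contrapositively in the style of the $\mathcal{V}^N$ proof: suppose $\mathcal{W}^{k_1}(B)<\infty$; then for each $\delta$ choose a cover with $\sum\rho_{k_1}(J(a_i,b_i))\le \mathcal{W}^{k_1}(B)+1=:C$, and estimate
\[
\mathcal{W}^{k_2}_\delta(B)\le\sum_i\omega_{k_2}\tau(a_i,b_i)^{k_2}
=\frac{\omega_{k_2}}{\omega_{k_1}}\sum_i\bigl(\omega_{k_1}\tau(a_i,b_i)^{k_1}\bigr)\,\tau(a_i,b_i)^{k_2-k_1}.
\]
To push $\mathcal{W}^{k_2}_\delta(B)\to 0$ as $\delta\to 0$ I need $\sup_i\tau(a_i,b_i)\to 0$, and \emph{this is exactly where uniform continuity of $\tau$ enters}: $d(a_i,b_i)<\delta$ and uniform continuity of $\tau$ (against $d\times d$) give a modulus $\eta(\delta)\to 0$ with $\tau(a_i,b_i)=|\tau(a_i,b_i)-\tau(a_i,a_i)|\le\eta(\delta)$ (using $\tau(a_i,a_i)=0$ when $a_i\not\ll a_i$, and in general $\tau(x,x)$ is either $0$ or, by uniform continuity together with the reverse triangle inequality, forced to vanish as $d\to0$ — I would spell out that $\tau(x,x)=0$ follows from uniform continuity, or simply note $\tau(a_i,b_i)\le\tau(a_i,a_i)+\eta(\delta)$ and that the cover may be taken with $a_i\ll b_i$ so the relevant case is $a_i\le b_i$). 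Hence $\mathcal{W}^{k_2}_\delta(B)\le\frac{\omega_{k_2}}{\omega_{k_1}}\,C\,\eta(\delta)^{k_2-k_1}\to 0$, proving (3); and (2) follows because if $\mathcal{W}^{k_1}(B)<\infty$ then (3)'s conclusion $\mathcal{W}^{k_2}(B)=0$ for every $k_2>\mathrm{Dim}^\tau(B)$ combined with the definition of $\mathrm{Dim}^\tau$ forces $\mathrm{Dim}^\tau(B)\le k_1$, contradicting $k_1<\mathrm{Dim}^\tau(B)$ — so in fact (2) is the contrapositive of (the relevant half of) the scaling estimate and does \emph{not} need uniform continuity, matching the hypotheses as stated.

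Let me restructure for the writeup: prove (1) from set-monotonicity; prove (2) by showing that $\mathcal{W}^{k_1}(B)<\infty$ together with $k_1<k_2$ and the crude bound $\tau(a_i,b_i)\le \mathrm{diam}_d$-type control is \emph{not} available, so instead derive (2) directly as: if $\mathcal{W}^{k_1}(B)<\infty$, then for $k_2>k_1$, $\mathcal{W}^{k_2}_\delta(B)\le (\omega_{k_2}/\omega_{k_1})\,(\sup_i\tau(a_i,b_i))^{k_2-k_1}\,\mathcal{W}^{k_1}_\delta$-cover-sum — but $\sup_i\tau(a_i,b_i)\le \delta$ fails in general without uniform continuity, so (2) must be argued purely from the definition of $\mathrm{Dim}^\tau$: since $\mathrm{Dim}^\tau(B)=\inf\{N:\mathcal{W}^N(B)<\infty\}$ and $k_1<\mathrm{Dim}^\tau(B)$, by definition of infimum $\mathcal{W}^{k_1}(B)=\infty$ — this is immediate and needs nothing. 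Finally prove (3) by the $\eta(\delta)$-modulus estimate above. The main obstacle, and the only place requiring care, is part (3): making rigorous that uniform continuity of $\tau$ yields $\tau(a_i,b_i)\to 0$ uniformly over all admissible covers as $\delta\to0$, which amounts to checking $\tau$ vanishes on the diagonal under uniform continuity (handle the three cases $a_i\not\le b_i$, $a_i\le b_i$ but $a_i\not\ll b_i$, and $a_i\ll b_i$ using $\tau(a_i,b_i)=0$, lower semicontinuity/uniform continuity, and the modulus respectively) and then summing; everything else is a transcription of the $\mathcal{V}^N$ argument with $\mathrm{diam}_d$ replaced by $d(\cdot,\cdot)$.
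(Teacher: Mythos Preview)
Your overall strategy matches the paper's: parts (1) and (2) are immediate from monotonicity of $\mathcal{W}^N$ in the set and from the very definition $\mathrm{Dim}^\tau(B)=\inf\{N:\mathcal{W}^N(B)<\infty\}$, and part (3) goes via the scaling identity $\rho_{k_2}=(\omega_{k_2}/\omega_{k})\,\rho_{k}\cdot\tau^{k_2-k}$ together with uniform continuity of $\tau$ to force $\tau(a_i,b_i)=o(1)$ as $d(a_i,b_i)\to 0$.

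There is, however, one genuine slip in your argument for (3). You run the finite-cover-sum estimate with the exponent $k_1$, assuming a cover with $\sum_i\rho_{k_1}(J(a_i,b_i))\le C<\infty$; but by part (2) you have just shown $\mathcal{W}^{k_1}(B)=\infty$, so no such cover exists and the estimate never launches. The fix (which is precisely what the paper does) is to introduce an intermediate exponent $k_3\in[\mathrm{Dim}^\tau(B),k_2)$ with $\mathcal{W}^{k_3}(B)=:G<\infty$; such $k_3$ exists by the definition of $\mathrm{Dim}^\tau$ as an infimum. Then for each $\delta>0$ take a cover with $d(a_i,b_i)<\delta$ and $\sum_i\rho_{k_3}(J(a_i,b_i))<G+1$, and your computation becomes
\[
\mathcal{W}^{k_2}_\delta(B)\le\frac{\omega_{k_2}}{\omega_{k_3}}\sum_i\rho_{k_3}(J(a_i,b_i))\,\tau(a_i,b_i)^{k_2-k_3}
\le\frac{\omega_{k_2}}{\omega_{k_3}}(G+1)\,\eta(\delta)^{k_2-k_3}\to 0.
\]
Your justification that $\tau(a_i,b_i)\le\eta(\delta)$ via uniform continuity and $\tau(x,x)=0$ is fine and is exactly the content of the paper's ``$o(1)$''; the only missing ingredient was using $k_3$ rather than $k_1$.
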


\begin{proof}
$(1)$ and $(2)$ follow similarly to the proof of $(i)$ and $(ii)$ in \cite[Lemma 3.3]{Mc;Sa}. Let $B\subseteq X$ and $ k_{3}\in[\mathrm{Dim}^{\tau}(B),k_{2})$ such that $\mathcal{W}^{k_{3}}(B)=G<\infty$. Take $\delta>0$ and a covering $\{J(a_{i}, b_{i})\}_{i\in \mathbb{N}}$ of $B$ such that $d(a_{i}, b_{i})<\delta$ and $\sum_{i}\rho_{k_{3}}(J(a_{i},b_{i}))<G+1$. Then, we have that
\[
\begin{aligned}
\mathcal{W}^{k_{2}}(B) & \le \sum_{i\in\mathbb{N}}\rho_{k_{2}}(J(a_{i},b_{i})) \\
                       & = \frac{\omega_{k_{2}}}{\omega_{k_{3}}} \sum_{i\in\mathbb{N}} \rho_{k_{3}}(J(a_{i},b_{i}))\tau(a_{i},b_{i})^{k_{2}-k_{3}} \\
                       & <\frac{\omega_{k_{2}}}{\omega_{k_{3}}}(G+1) \;o(1)^{k_{2}-k_{3}} \to 0,
\end{aligned}
\]
as $\delta \to 0$. Therefore, we have $\mathcal{W}^{k_{2}}(B)=0$.
\end{proof}

\begin{Theorem}
  Let $(X,d,\le,\ll,\tau)$ be a Lorentzian pre-length space, with uniformly continuous time separation $\tau$. Let $X=\bigcup_{i\in\mathbb{N}}U_{i}$. Then, 
  \[\mathrm{Dim}^{\tau}(X)=\sup_{i\in\mathbb{N}}\mathrm{Dim}^{\tau}(U_{i}).\]
\end{Theorem}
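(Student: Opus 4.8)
The plan is to imitate the proof of the analogous union formula for $\dim^{\tau}$ and $\mathcal{V}^{N}$, using Theorem \ref{Th timelike dimension W^n} as the sole nontrivial input, together with the fact (recorded after Definition \ref{Def W^n}) that $\mathcal{W}^{N}$ is a measure on $X$. The argument splits into the two inequalities.

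First I would prove $\sup_{i\in\mathbb{N}}\mathrm{Dim}^{\tau}(U_{i})\le\mathrm{Dim}^{\tau}(X)$. This is immediate from monotonicity: each $U_{i}\subseteq X$, so part (1) of Theorem \ref{Th timelike dimension W^n} gives $\mathrm{Dim}^{\tau}(U_{i})\le\mathrm{Dim}^{\tau}(X)$ for every $i$, and taking the supremum over $i$ yields the claim. No use of the uniform continuity hypothesis is needed for this direction.

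For the reverse inequality, set $S:=\sup_{i\in\mathbb{N}}\mathrm{Dim}^{\tau}(U_{i})$; if $S=\infty$ there is nothing to prove, so assume $S<\infty$. Fix any $k>S$. Then $k>\mathrm{Dim}^{\tau}(U_{i})$ for every $i$, and since $\tau$ is uniformly continuous, part (3) of Theorem \ref{Th timelike dimension W^n} applies and gives $\mathcal{W}^{k}(U_{i})=0$ for all $i$. Because $\mathcal{W}^{k}$ is a measure on $X$, it is countably subadditive, so $\mathcal{W}^{k}(X)\le\sum_{i\in\mathbb{N}}\mathcal{W}^{k}(U_{i})=0$; in particular $\mathcal{W}^{k}(X)<\infty$, hence $\mathrm{Dim}^{\tau}(X)\le k$ by definition of $\mathrm{Dim}^{\tau}$. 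Since this holds for every $k>S$, taking the infimum over such $k$ gives $\mathrm{Dim}^{\tau}(X)\le S$, and combining with the first step completes the proof.

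The argument is essentially routine once Theorem \ref{Th timelike dimension W^n} is available, so I do not expect a serious obstacle; the only points deserving a little care are that part (3) of that theorem applies only strictly above the dimension, so one obtains $\mathcal{W}^{k}(X)=0$ for each $k>S$ rather than directly at $k=S$ and must pass to the infimum, and that the countable union here consists of arbitrary subsets, so one genuinely needs the (outer) countable subadditivity of $\mathcal{W}^{k}$ rather than just additivity on Borel sets. The uniform continuity assumption is exactly what licenses the use of part (3); as observed before Theorem \ref{Th timelike dimension W^n}, it cannot be dropped in general, since for $\mathcal{W}^{N}$ the uniqueness of dimension already fails on merely locally $d$-uniform spaces.
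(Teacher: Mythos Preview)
Your proposal is correct and is exactly the standard argument the paper has in mind: the paper's own proof simply says ``The proof goes exactly in the same way as that of Lemma 3.5 in \cite{Mc;Sa},'' and that lemma is proved by precisely the monotonicity/countable-subadditivity scheme you wrote out, with Theorem \ref{Th timelike dimension W^n}(3) supplying the vanishing of $\mathcal{W}^{k}$ above the supremum. Your care about applying part (3) only for $k>S$ and then passing to the infimum, and about using outer countable subadditivity of $\mathcal{W}^{k}$, is well placed and matches the intended argument.
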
 

\begin{proof}
  The proof goes exactly in the same way as that of Lemma 3.5 in $\cite{Mc;Sa}$.
\end{proof}

\subsection{Volume comparison inequality for $\mathcal{W}^{N}$}
In this subsection, we establish the volume comparison inequality for $\mathcal{W}^{N}$, and we see that for $\mathcal{W}^{N}$, we do not need to require the map $f$ to be surjective and to preserve the causal relation dually.

$\linebreak$\textbf{Theorem \ref{Th Volcom W^n}} \textit{Let $(X, d_{X}, \le_{X}, \ll_{X}, \tau_{X})$ and $(Y, d_{Y} ,\le_{Y}, \ll_{Y}, \tau_{Y})$ be Lorentzian pre-length spaces. Take $A \subseteq X$. Let $f:X \to Y$ satisfy the following.}

\begin{description}

\item{\textit{(1)}} \textit{$f$ is uniformly continuous with respect to $d_{X}$ and $d_{Y}$ on $X$,}

\item{\textit{(2)}} \textit{$f$ is timelike $\lambda$-Lipschitz with respect to $\tau_{X}$ and $\tau_{Y}$ on $X$,}

\item{\textit{(3)}} \textit{$f$ preserves causality i.e. $x \le y \Rightarrow f(x) \le f(y)$ on $X$}.

\end{description}$\linebreak$
\textit{Then, $\mathcal{W}^{N}(f(A)) \le \lambda^{N} \mathcal{W}^{N}(A)$ holds.}

\begin{proof}
For $k\in \mathbb{N}$, set $D(k)$ as in Theorem \ref{Th Volcom V^n}. Take any covering $\{J(a_{i},b_{i})\}_{i=1}^{L}$ of $A$ such that
  \[\left\{ \begin{aligned} &J(a_{i},b_{i})\cap A \not= \emptyset, \\
                            &d_{X}(a_{i},b_{i})<D(k). \\
             \end{aligned} 
             \right.
             \]
Since $f$ preserves the causal relation and by the choice of $D(k)$, we have 
\[f(J(a_{i},b_{i})) \subseteq J(f(a_{i}),f(b_{i})),\]
and 
\[d_{Y}(f(a_{i}),f(b_{i}))< \frac{1}{k}.\]
Therefore, we have a covering, $\{J(f(a_{i}),f(b_{i}))\}_{i=1}^{L}$, of $f(A)$ such that 
 \[\left\{ \begin{aligned}  &J(f(a_{i}),f(b_{i}))\cap f(A) \not= \emptyset, \\
                            &d_{Y}(f(a_{i}),f(b_{i}))<\frac{1}{k}. \\
             \end{aligned} 
             \right.
             \]
Furthermore, for $f$ is timelike $\lambda$-Lipschitz, we have
 \[\begin{aligned}
 \mathcal{W}_{\frac{1}{k}}^{N}(f(A))\le \sum_{i=1}^{L} \rho_{N}(J(f(a_{i}),f(b_{i})))
                                     &=\sum_{i=1}^{L} \omega_{N}\tau_{Y}(f(a_{i}),f(b_{i}))^{N}\\
                                     &\le \sum_{i=1}^{L} \omega_{N}\lambda^{N}\tau_{X}(a_{i},b_{i})^{N}.
  \end{aligned}\]
It implies that $\mathcal{W}_{\frac{1}{k}}^{N}(f(A))\le \lambda^{N}\mathcal{W}_{D(k)}^{N}(A)$, and letting $k\to \infty$, $\mathcal{W}^{N}(f(A))\le \lambda^{N}\mathcal{W}^{N}(A)$ follows. 
\end{proof}

\section{Coincidence of $\mathcal{W}^{N}$ and $\mathcal{V}^{N}$}

In general, a uniform upper bound for the distance between vertices of causal diamonds does not give a uniform upper bound for the diameter of causal diamonds. Therefore, $\mathcal{W}^{N}$ may not coincide with $\mathcal{V}^{N}$, and even $\mathcal{W}^{N}$ may fail to be a Borel measure.

On globally hyperbolic smooth spacetimes, we can construct a distance, named null distance, for which the diameter of a causal diamond is bounded from above by the distance between its vertices. From this property, we can see that $\mathcal{V}^{N}$ and $\mathcal{W}^{N}$ with respect to the null distance coincide. Moreover, in smooth spacetimes, there exists a definite null distance that is locally bi-Lipschitz to any Riemannian distance, and we can see that $\mathcal{W}^{N}$ with respect to such null distance coincides with the volume measure with respect to the Lorentzian metric $\mathrm{vol}^{g}$. 

On continuous spacetimes, it is not known if there exists such distance as mentioned above. However, we can construct cylindrical neighborhoods for each point on strongly causal continuous spacetimes \cite[Lemma 4.2]{Mc;Sa}, and we show that $\mathcal{V}^{N} = \mathcal{W}^{N}$ on compact continuous spacetimes with cylindrical neighborhoods. Moreover, we will see that $\mathcal{V}^{N}=\mathcal{W}^{N}$ on Lorentzian warped products.

\subsection{Null distance and $\mathcal{W}^{N}=\mathcal{V}^{N}$ on Lorentzian pre-length spaces}
In \cite{Sor;Ve}, Sormani and Vega introduced a pseudo-metric on smooth spacetimes, named null distance, which stems from generalized time function. Null distances encode both the topological and causal structures on smooth spacetimes. The notion of null distance was extended to Lorentzian pre-length spaces in $\cite{Kun;Ste;NULLd}$. In this subsection, we recall the definition and basic properties of null distance on Lorentzian pre-length spaces based on $\cite{Kun;Ste;NULLd}$, and show that on a Lorentzian pre-length space with a definite null distance, $\mathcal{W}^{N}$ coincides with $\mathcal{V}^{N}$ with respect to the null distance.

\begin{Definition}[Null distance]\label{Def Null distance}
Let $X$ be a Lorentzian pre-length space. We say that $\rho:X \to \mathbb{R}$ is a \emph{generalized time function} on $X$ when $\rho$ is strictly increasing along any non-constant future-directed causal curve. We call a generalized time function $\rho$ a \emph{time function} if $\rho$ is continuous. Let $\rho$ be a generalized time function and $\beta:[a, b]\to X$ be a \emph{piecewise causal curve} from $x=\beta(a)$ to $y=\beta(b)$, i.e. there exists a partition $\{t_{i}\}_{i=1}^{L}$ of $[a,b]$ such that $\beta \vert_{[t_{i}, t_{i+1}]}$ is future or past causal for \textrm{$i$=1, 2, \dots, $L-1$}. Then we define the \emph{null length of $\beta$}, denoted by $\hat{L}_{\rho}(\beta)$ as
\[\hat{L}_{\rho}(\beta)= \sum_{i=1}^{L-1}{\mid}\rho(\beta(t_{i+1}))-\rho(\beta(t_{i})){\mid}.\]
Furthermore, we define \emph{null distance} between $x$ and $y$ as
\[\hat{d}_{\rho}(x,y)=\inf\{\hat{L}_{\rho}(\beta)\;|\;\beta \;\textrm{is a piecewise causal curve from $x$ to $y$}\}.\]
\end{Definition}

\quad Through following Definition \ref{Def scc LpLS} and Lemma \ref{Lem existence piecewise causal cueve}, we can see a sufficient condition for that a Lorentzian pre-length space has null distance. 
\begin{Definition}[\cite{Kun;Ste;NULLd} Definition 3.4]\label{Def scc LpLS}
Let $X$ be a Lorentzian pre-length space. We say that $X$ is $\emph{sufficiently causally connected (scc)}$ if $X$ is path connected, causally path connected, and for any $p\in X$ there exists a timelike curve including $p$. 
\end{Definition}

\begin{Lemma}[\cite{Kun;Ste;NULLd} Lemmas 3.5, 3.7]\label{Lem existence piecewise causal cueve}
Let $X$ be a scc Lorentzian pre-length space with a generalized time function $\rho$. Then, for any $x,y\in X$, there exists a piecewise causal curve connecting $x$ and $y$. In particular, we can define the null distance $\hat{d}_{\rho}$ on $X\times X$. Moreover, $\hat{d}_{\rho}$ is a finite pseudo-metric on $X$.

\end{Lemma}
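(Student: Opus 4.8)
The plan is to follow \cite[Lemmas 3.5 and 3.7]{Kun;Ste;NULLd} and proceed in three stages: first construct a piecewise causal curve between any two points, then deduce that $\hat{d}_{\rho}$ is well defined and finite, and finally verify the pseudo-metric axioms.

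For the first stage I would begin by recording that $I^{\pm}(p)$ is $d$-open for every $p\in X$: since $\tau$ is lower semi-continuous, $\{(x,y)\in X\times X:\tau(x,y)>0\}=\{(x,y):x\ll y\}$ is open, and hence so are its slices $I^{+}(p)=\{y:\tau(p,y)>0\}$ and $I^{-}(p)=\{y:\tau(y,p)>0\}$. Fix $x,y\in X$; by path connectedness choose a continuous $\sigma:[0,1]\to X$ with $\sigma(0)=x$ and $\sigma(1)=y$. By the scc hypothesis (Definition \ref{Def scc LpLS}) every $\sigma(t)$ lies on a timelike curve, so there are $a_t\ll\sigma(t)\ll b_t$, and $U_t:=I^{+}(a_t)\cap I^{-}(b_t)$ is an open neighbourhood of $\sigma(t)$. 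Covering $[0,1]$ by the open sets $\sigma^{-1}(U_t)$ and using a Lebesgue number, take a partition $0=t_0<\dots<t_n=1$ so that each $\sigma([t_{j-1},t_j])$ lies in a single $U_{s_j}$; then $a_{s_j}\ll\sigma(t_{j-1})$ and $a_{s_j}\ll\sigma(t_j)$, so causal path connectedness yields future-directed timelike curves from $a_{s_j}$ to each of these two points. Traversing the first backwards and the second forwards joins $\sigma(t_{j-1})$ to $\sigma(t_j)$ by a piecewise causal curve, and concatenating over $j=1,\dots,n$ gives a piecewise causal curve from $x$ to $y$.

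It follows that the infimum defining $\hat{d}_{\rho}(x,y)$ is over a non-empty set; moreover the curve just constructed has finitely many causal pieces, so its null length is a finite sum of terms $|\rho(\beta(t_{i+1}))-\rho(\beta(t_i))|$, each finite because $\rho$ is real-valued, whence $\hat{d}_{\rho}(x,y)<\infty$. Nonnegativity is immediate, symmetry holds since reversing a piecewise causal curve is piecewise causal with the same null length, and the triangle inequality follows by concatenating near-optimal curves for $(x,y)$ and $(y,z)$, using additivity of null length under concatenation.

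The one axiom that genuinely needs the ``timelike curve through every point'' hypothesis is $\hat{d}_{\rho}(x,x)=0$, and I expect this to be the main obstacle. Taking a timelike curve $\gamma$ through $x=\gamma(0)$, the concatenation of $\gamma|_{[0,\epsilon]}$ with its reversal is a piecewise causal loop at $x$ of null length $2\,|\rho(\gamma(\epsilon))-\rho(\gamma(0))|$, and one must show this can be made arbitrarily small as $\epsilon\to 0$. When $\rho$ is a (continuous) time function this is immediate; for a merely generalized time function one has to control the possible jump of $\rho$ at $x$ along $\gamma$, which is the delicate point of the argument. The remaining items — reparametrisation invariance of null length and compatibility with Definition \ref{Def Null distance} — are routine.
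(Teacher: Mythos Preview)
The paper does not give its own proof of this lemma: it is stated purely as a citation of \cite[Lemmas~3.5, 3.7]{Kun;Ste;NULLd}, with no argument supplied, so there is nothing in the paper to compare your sketch against. That said, your first stage---cover the image of a connecting path $\sigma$ by chronological diamonds $I^+(a_t)\cap I^-(b_t)$, extract a Lebesgue number, and join consecutive partition points through the common past vertex $a_{s_j}$ using causal path connectedness---is precisely the argument of \cite[Lemma~3.5]{Kun;Ste;NULLd}, and your verification of finiteness, symmetry, and the triangle inequality is the routine content of \cite[Lemma~3.7]{Kun;Ste;NULLd}.

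Where your proposal goes astray is in singling out $\hat d_\rho(x,x)=0$ as ``the main obstacle'' and the one place that genuinely needs the timelike-curve-through-every-point hypothesis. In the formulation of Definition~\ref{Def Null distance} (and in \cite{Kun;Ste;NULLd}), the degenerate curve $\beta:[a,a]\to X$, $\beta(a)=x$, is a piecewise causal curve: with the one-point partition $\{t_1\}=\{a\}$ the condition on the pieces is vacuous, and its null length is the empty sum, hence zero. Thus $\hat d_\rho(x,x)=0$ is immediate and requires neither the scc hypothesis nor any control of jumps of $\rho$ along a timelike curve. The scc hypothesis has already done its work in your first stage; the genuinely delicate issues with merely generalized (possibly discontinuous) time functions concern continuity and definiteness of $\hat d_\rho$, which are the subject of the subsequent Lemma~\ref{Lem d_rho continuous} and Proposition~\ref{Pro d_rho definite}, not the pseudo-metric axioms themselves.
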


We can see sufficient conditions for that $d_{\rho}$ is continuous and that topologies induced by $d$ and $d_{\rho}$ coincide through following Lemma \ref{Lem d_rho continuous} and Proposition \ref{Pro d_rho topology}.
\begin{Lemma}[\cite{Kun;Ste;NULLd} Proposition 3.9]\label{Lem d_rho continuous}
$\hat{d}_{\rho}$ is continuous if and only if $\rho$ is continuous on $X$.
\end{Lemma}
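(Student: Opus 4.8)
The plan is to prove the two implications separately, with the elementary estimate $|\rho(x)-\rho(y)|\le\hat{d}_{\rho}(x,y)$ powering both. This estimate is immediate from the definition of null length: if $\beta\colon[a,b]\to X$ is any piecewise causal curve from $x$ to $y$ with associated partition $\{t_{i}\}_{i=1}^{L}$, then, writing $c_{i}=\rho(\beta(t_{i+1}))-\rho(\beta(t_{i}))$, the sum $\sum_{i}c_{i}$ telescopes to $\rho(y)-\rho(x)$, so $|\rho(y)-\rho(x)|=|\sum_{i}c_{i}|\le\sum_{i}|c_{i}|=\hat{L}_{\rho}(\beta)$; taking the infimum over all such $\beta$ gives the claim.

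For ``$\hat{d}_{\rho}$ continuous $\Rightarrow$ $\rho$ continuous'' I would argue as follows. Since $\hat{d}_{\rho}$ is a finite pseudo-metric (Lemma \ref{Lem existence piecewise causal cueve}), $\hat{d}_{\rho}(x_{0},x_{0})=0$ for every $x_{0}$. If $x_{n}\to x_{0}$ in $(X,d)$, then $\hat{d}_{\rho}(x_{n},x_{0})\to\hat{d}_{\rho}(x_{0},x_{0})=0$ by continuity of $\hat{d}_{\rho}$, and hence $|\rho(x_{n})-\rho(x_{0})|\le\hat{d}_{\rho}(x_{n},x_{0})\to0$; as $(X,d)$ is metric, this shows $\rho$ is continuous.

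For the converse ``$\rho$ continuous $\Rightarrow$ $\hat{d}_{\rho}$ continuous'', I would first reduce to the diagonal: since $\hat{d}_{\rho}$ satisfies the triangle inequality, $|\hat{d}_{\rho}(x,y)-\hat{d}_{\rho}(x_{0},y_{0})|\le\hat{d}_{\rho}(x,x_{0})+\hat{d}_{\rho}(y,y_{0})$, so it suffices to show that for each fixed $x_{0}\in X$ one has $\hat{d}_{\rho}(x,x_{0})\to0$ as $x\to x_{0}$ in $(X,d)$. Fix $x_{0}$ and $\varepsilon>0$. Using that $X$ is scc (Definition \ref{Def scc LpLS}), choose a timelike curve through $x_{0}$; since it is $d$-continuous and $\rho$ is continuous, a point $q$ on it slightly to the future of $x_{0}$ satisfies $x_{0}\ll q$ and $|\rho(q)-\rho(x_{0})|<\varepsilon/3$. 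By lower semi-continuity of $\tau$ the set $I^{-}(q)=\{x:\tau(x,q)>0\}$ is $d$-open and contains $x_{0}$, so intersecting it with the neighbourhood $\{x:|\rho(x)-\rho(x_{0})|<\varepsilon/3\}$ given by continuity of $\rho$ yields a $d$-neighbourhood $U$ of $x_{0}$. For $x\in U$ we have $x\ll q$ and $x_{0}\ll q$, so by causal path connectedness there are future-directed causal curves from $x$ to $q$ and from $x_{0}$ to $q$; concatenating the first with the reverse of the second gives a piecewise causal curve $\beta$ from $x$ to $x_{0}$ with $\hat{L}_{\rho}(\beta)=|\rho(q)-\rho(x)|+|\rho(q)-\rho(x_{0})|\le\big(|\rho(q)-\rho(x_{0})|+|\rho(x_{0})-\rho(x)|\big)+|\rho(q)-\rho(x_{0})|<\varepsilon$, whence $\hat{d}_{\rho}(x,x_{0})<\varepsilon$.

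The step that carries the content is the converse direction, and within it the observation that lower semi-continuity of $\tau$ — built into the pre-length space axioms — is exactly what makes $I^{-}(q)$ a $d$-neighbourhood of $x_{0}$, so that the two-segment curve above can be formed for every $x$ near $x_{0}$; once this is recognized, what remains (the reduction to the diagonal via the triangle inequality, the telescoping lower bound, and the $\varepsilon/3$ bookkeeping) is routine. I would also dispose explicitly of the harmless degenerate cases $x=q$ or $q=x_{0}$, where a single segment of $\beta$ already gives null length less than $\varepsilon$.
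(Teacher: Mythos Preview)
The paper does not prove this lemma; it merely records it as a citation of Proposition~3.9 in \cite{Kun;Ste;NULLd}. Your argument is correct and is essentially the standard one found in that reference: the telescoping estimate $|\rho(x)-\rho(y)|\le\hat{d}_{\rho}(x,y)$ yields the forward implication immediately, and for the converse the scc hypothesis together with the openness of $I^{-}(q)$ (from lower semi-continuity of $\tau$) lets one build a two-leg piecewise causal curve through a nearby future point $q$ of small null length.
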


 \begin{Proposition}[\cite{Kun;Ste;NULLd} Proposition 3.10]\label{Pro d_rho topology}
Let $X$ be a scc Lorentzian pre-length space with a generalized time function $\rho$ and suppose that $X$ is locally compact. Then, the following are equivalent:

  \begin{itemize}
  
  \item[(i)] $\hat{d}_{\rho}$ induces the same topology on $X$ as that of $d$,
  \item[(ii)] $\rho$ is continuous and $\hat{d}_{\rho}$ is definite.
 
  \end{itemize}
\end{Proposition}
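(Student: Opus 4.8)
The plan is to prove the two implications separately, using Lemma~\ref{Lem d_rho continuous} as the bridge between continuity of $\rho$ and continuity of $\hat{d}_{\rho}$ relative to $d$, and invoking local compactness only in the harder direction (ii)$\Rightarrow$(i). For (i)$\Rightarrow$(ii): since $(X,d)$ is a metric space it is Hausdorff, so if the $\hat{d}_{\rho}$-topology coincides with the $d$-topology then $\hat{d}_{\rho}$ is a pseudo-metric inducing a Hausdorff topology, which forces it to be definite. For continuity of $\rho$ I would note that $\hat{d}_{\rho}$ is always continuous as a map $(X,\hat{d}_{\rho})\times(X,\hat{d}_{\rho})\to\mathbb{R}$ by the triangle inequality $|\hat{d}_{\rho}(x,y)-\hat{d}_{\rho}(x',y')|\le\hat{d}_{\rho}(x,x')+\hat{d}_{\rho}(y,y')$; since the two topologies on $X$ agree, the two product topologies on $X\times X$ agree, hence $\hat{d}_{\rho}$ is continuous with respect to $d$, and Lemma~\ref{Lem d_rho continuous} then gives that $\rho$ is continuous.

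For (ii)$\Rightarrow$(i), first continuity of $\rho$ yields, through Lemma~\ref{Lem d_rho continuous}, continuity of $\hat{d}_{\rho}\colon(X,d)\times(X,d)\to\mathbb{R}$; in particular every $\hat{d}_{\rho}$-ball is $d$-open, so the $\hat{d}_{\rho}$-topology is coarser than the $d$-topology. It remains to show that $\mathrm{id}\colon(X,\hat{d}_{\rho})\to(X,d)$ is continuous, and since both spaces are metric (here definiteness makes $\hat{d}_{\rho}$ a genuine metric, finite by Lemma~\ref{Lem existence piecewise causal cueve}) it suffices to work with sequences. Assume this fails: there are $x\in X$, a $d$-open $U\ni x$, and points $y_{n}$ with $\hat{d}_{\rho}(x,y_{n})\to0$ but $y_{n}\notin U$. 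Using that $(X,d)$ is locally compact and Hausdorff, I would choose a compact set $C$ with $x\in\mathrm{int}_{d}(C)\subseteq C\subseteq U$, and pick piecewise causal curves $\beta_{n}$ from $x$ to $y_{n}$ with $\hat{L}_{\rho}(\beta_{n})\to0$ (such curves exist by Lemma~\ref{Lem existence piecewise causal cueve}). Each $\beta_{n}$ is continuous and leaves $C$, so by connectedness of its parameter interval it meets $\partial C$; letting $z_{n}$ be its value at the first parameter where it exits $\mathrm{int}_{d}(C)$, one has $z_{n}\in\partial C$, and since the null length of an initial sub-curve is at most that of the whole curve (because $\rho$ is monotone along each causal piece), $\hat{d}_{\rho}(x,z_{n})\le\hat{L}_{\rho}(\beta_{n})\to0$. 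As $\partial C$ is compact, a subsequence $z_{n_{k}}$ converges in $d$ to some $z\in\partial C$, and continuity of $\hat{d}_{\rho}$ with respect to $d$ gives $\hat{d}_{\rho}(x,z)=\lim_{k}\hat{d}_{\rho}(x,z_{n_{k}})=0$; definiteness then forces $x=z$, contradicting $x\in\mathrm{int}_{d}(C)$ and $z\in\partial C$. Hence the two topologies agree.

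The step I expect to be the main obstacle is the second half of (ii)$\Rightarrow$(i): upgrading ``$\hat{d}_{\rho}$-small'' to ``$d$-small''. The only leverage is local compactness, exploited via the observation that an approximating piecewise causal curve from $x$ to a far-away point must pass through the compact topological boundary of a compact $d$-neighbourhood of $x$ at a point that is still $\hat{d}_{\rho}$-close to $x$; making the ``first exit point'' and the monotonicity of null length along initial sub-curves precise is the technical core. The remaining parts are soft point-set topology together with the already-available Lemma~\ref{Lem d_rho continuous} and Lemma~\ref{Lem existence piecewise causal cueve}.
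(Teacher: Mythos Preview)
The paper under review does not supply its own proof of this proposition; it is merely quoted from \cite{Kun;Ste;NULLd}, Proposition~3.10, so there is nothing in the present paper to compare your argument against. Your proof is correct and is the natural one: (i)$\Rightarrow$(ii) is soft point-set topology combined with Lemma~\ref{Lem d_rho continuous}, and for (ii)$\Rightarrow$(i) your first-exit-point argument through the boundary of a compact $d$-neighbourhood is sound (piecewise causal curves are $d$-continuous since each piece is locally Lipschitz, the null length of an initial segment is dominated by the full null length because $\rho$ is monotone on each causal piece, and $\partial C$ is a closed subset of the compact set $C$, hence compact).
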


In Definition \ref{Def Locally anti-Lipschitz} and Proposition \ref{Pro d_rho definite}, we get a necessary and sufficient condition that a null distance $d_{\rho}$ is definite on locally compact scc Lorentzian pre-length spaces.
\begin{Definition}\label{Def Locally anti-Lipschitz}
Let $X$ be a Lorentzian pre-length space and $U \subseteq X$. Then, we say that $f:X \to \mathbb{R}$ is \emph{anti-Lipschitz on $U$} if there exists a distance function $d_{U}$ on $U$ such that
\[f(y)-f(x)\ge d_{U}(x,y) \textrm{ for all $x,y\in U$ with $x\le y$}.\]
We call $f$ a \emph{locally anti-Lipschitz function} on $X$ if for any $x\in X$, there exists a neighborhood $U_{x}$ of $x$ where $f$ is anti-Lipschitz.
\end{Definition}

\begin{Proposition}[\cite{Kun;Ste;NULLd} Proposition 3.12]\label{Pro d_rho definite}
Let $\rho$ be a generalized time function on a locally compact scc Lorentzian pre-length space $X$. Then, $\hat{d}_{\rho}$ is definite if and only if $\rho$ is a locally anti-Lipschitz function on X. 
\end{Proposition}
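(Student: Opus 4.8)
The plan is to prove the two implications separately. The forward implication, that definiteness of $\hat{d}_{\rho}$ forces $\rho$ to be locally anti‑Lipschitz, turns out to be short once one notices that the null length of a single causal segment is exactly an increment of $\rho$; the converse is the substantive one and is where local compactness is really used.

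For the forward direction, assume $\hat{d}_{\rho}$ is definite; by Lemma~\ref{Lem existence piecewise causal cueve} it is then a finite pseudo‑metric, hence a genuine metric on $X$. The first step I would carry out is the elementary bound $\hat{d}_{\rho}(x,y)\le\rho(y)-\rho(x)$ for all $x\le y$. Indeed, if $x\ne y$ then $x<y$, so by causal path connectedness (part of scc) there is a future‑directed causal curve $\gamma$ from $x$ to $y$, and taking the trivial partition of its parameter interval gives $\hat{L}_{\rho}(\gamma)=|\rho(y)-\rho(x)|=\rho(y)-\rho(x)$ since $\rho$ strictly increases along $\gamma$; the case $x=y$ is trivial. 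Thus $\rho$ is anti‑Lipschitz on all of $X$ with respect to the distance function $\hat{d}_{\rho}$, and a fortiori locally anti‑Lipschitz (take $U_{x}=X$, or restrict $\hat{d}_{\rho}$ to any neighborhood). Note that local compactness is not needed here, and that the $d_{U_{x}}$ so produced need not be continuous, which is harmless in view of Definition~\ref{Def Locally anti-Lipschitz}.

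For the converse, assume $\rho$ is locally anti‑Lipschitz and fix $x\ne y$; the goal is $\hat{d}_{\rho}(x,y)>0$. First I would use Definition~\ref{Def Locally anti-Lipschitz} to pick a neighborhood $U$ of $x$ carrying a distance function $d_{U}$ with $\rho(q)-\rho(p)\ge d_{U}(p,q)$ whenever $p\le q$ in $U$, and then shrink it, using local compactness, so that $\overline{B_{d}(x,\epsilon)}$ is compact and contained in $U$ for some $\epsilon>0$. The key lemma to establish next is that any piecewise causal curve $\beta\colon[a,b]\to X$ with image in $U$ satisfies $\hat{L}_{\rho}(\beta)\ge d_{U}(\beta(a),\beta(b))$: on each future‑ or past‑directed causal piece the anti‑Lipschitz inequality gives $|\rho(\beta(t_{i+1}))-\rho(\beta(t_{i}))|\ge d_{U}(\beta(t_{i}),\beta(t_{i+1}))$, and summing and applying the triangle inequality for $d_{U}$ yields the claim. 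Then, for an arbitrary piecewise causal curve $\beta$ from $x$ to $y$: if its image stays inside $B_{d}(x,\epsilon)$ then $\hat{L}_{\rho}(\beta)\ge d_{U}(x,y)>0$; otherwise $\beta$ first reaches $d$‑distance $\epsilon$ from $x$ at some parameter $s$, the restriction $\beta|_{[a,s]}$ (still piecewise causal after adding $s$ to the partition) has image in $\overline{B_{d}(x,\epsilon)}\subseteq U$, and so $\hat{L}_{\rho}(\beta)\ge\hat{L}_{\rho}(\beta|_{[a,s]})\ge d_{U}(x,\beta(s))\ge c$, where $c:=\inf\{d_{U}(x,z)\mid z\in\overline{B_{d}(x,\epsilon)},\ d(x,z)=\epsilon\}$. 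Taking the infimum over all such $\beta$ (nonempty by Lemma~\ref{Lem existence piecewise causal cueve}) gives $\hat{d}_{\rho}(x,y)\ge\min\{d_{U}(x,y),c\}$, with the first term dropped when $y\notin B_{d}(x,\epsilon)$.

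The main obstacle is to see that $c>0$. This is exactly where compactness of $\overline{B_{d}(x,\epsilon)}$ is spent, together with compatibility of $d_{U}$ with the ambient topology: the $d$‑sphere $\{z\in\overline{B_{d}(x,\epsilon)}\mid d(x,z)=\epsilon\}$ is compact and does not contain $x$, so the continuous map $z\mapsto d_{U}(x,z)$ attains a strictly positive minimum on it. Pinning down this point carefully — together with the routine but necessary check that restrictions of piecewise causal curves stay piecewise causal — is the bulk of the work; once it is in place the two implications combine to give the proposition. As a consistency check, this dovetails with Proposition~\ref{Pro d_rho topology}: if $\rho$ is moreover continuous, then definiteness of $\hat{d}_{\rho}$ is equivalent to $\hat{d}_{\rho}$ inducing the topology of $d$, so the proposition then reads that this topological compatibility is governed precisely by the local anti‑Lipschitz property of $\rho$.
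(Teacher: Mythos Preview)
The paper does not prove this proposition; it is quoted from \cite{Kun;Ste;NULLd} without argument, so there is no in-paper proof to compare against. Your forward direction is correct and is the standard one-line observation: a single causal curve from $x$ to $y$ witnesses $\hat d_\rho(x,y)\le\rho(y)-\rho(x)$, so $\hat d_\rho$ itself (now a genuine metric) serves as $d_U$.

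There is, however, a gap in your converse direction. You assert that $z\mapsto d_{U}(x,z)$ is continuous (``compatibility of $d_U$ with the ambient topology''), and this is precisely what makes $c>0$ via compactness of the $d$-sphere. But Definition~\ref{Def Locally anti-Lipschitz} as stated in this paper imposes no topological condition on $d_U$: it is merely \emph{some} metric on $U$, possibly inducing a topology unrelated to that of $d$. Without continuity, a sequence $z_n$ on the $d$-sphere of radius $\epsilon$ can satisfy $d_U(x,z_n)\to 0$ while each $d_U(x,z_n)>0$, so the infimum $c$ may vanish; the sequential variant (extract a $d$-convergent subsequence of exit points $\beta_n(s_n)\to z$ and conclude $d_U(x,z)=0$) hits the same obstruction, since $d_U$-convergence need not follow from $d$-convergence. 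You should check whether the original definition in \cite{Kun;Ste;NULLd} (following \cite{Sor;Ve}) actually requires $d_U$ to induce the subspace topology --- in which case your argument goes through verbatim --- or else supply an argument that does not invoke continuity of $d_U$. As written, the step ``the continuous map $z\mapsto d_U(x,z)$ attains a strictly positive minimum'' is unjustified by the definition given here.
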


Finally, to get $\mathcal{V}^{N}=\mathcal{W}^{N}$ with respect to null distances, we will see that the distance of vertices of a causal diamond gives an upper bound of the diameter of the causal diamond.

\begin{Lemma}\label{Lem diam for d_rho}
Let $X$ be a scc Lorentzian pre-length space, $\rho$ be a generalized time function on $X$, and $p,q\in X$ with $p\le q$. Then we have
\[\mathrm{diam}_{{\hat{d}_{\rho}}}(J(x,y)) \le 2\hat{d}_{\rho}(x,y).\]
\end{Lemma}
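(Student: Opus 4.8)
The plan is to bound the $\hat{d}_\rho$-distance between any two points $u,v$ of the causal diamond $J(x,y)$ by $2\hat{d}_\rho(x,y)$, which immediately yields the claimed diameter bound. The key observation is that for any point $u\in J(x,y)$ we have $x\le u\le y$, so that $\rho(x)\le\rho(u)\le\rho(y)$ by monotonicity of the generalized time function along causal curves; moreover, since $X$ is scc and causally path connected, there are actual future-directed causal curves from $x$ to $u$ and from $u$ to $y$, which are in particular piecewise causal curves. First I would estimate $\hat{d}_\rho(x,u)$ and $\hat{d}_\rho(u,y)$: concatenating the causal curve from $x$ to $u$ with the reverse of the causal curve from $x$ to $y$ (or more directly, using a piecewise causal curve realizing $\hat d_\rho(x,y)$ together with a causal segment) shows $\hat{d}_\rho(x,u)\le \rho(u)-\rho(x)\le\rho(y)-\rho(x)$, and similarly $\hat{d}_\rho(u,y)\le\rho(y)-\rho(u)\le\rho(y)-\rho(x)$. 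The crucial point here is that $\rho(y)-\rho(x)\le\hat{d}_\rho(x,y)$: indeed, for any piecewise causal curve $\beta$ from $x$ to $y$, a telescoping/triangle-inequality argument on the partition gives $|\rho(y)-\rho(x)|\le\sum_i|\rho(\beta(t_{i+1}))-\rho(\beta(t_i))|=\hat{L}_\rho(\beta)$, and taking the infimum over $\beta$ yields $\rho(y)-\rho(x)\le\hat{d}_\rho(x,y)$.

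Next I would combine these estimates using the triangle inequality for the pseudo-metric $\hat{d}_\rho$ (Lemma \ref{Lem existence piecewise causal cueve} guarantees $\hat d_\rho$ is a genuine pseudo-metric on $X$). For two arbitrary points $u,v\in J(x,y)$,
\[
\hat{d}_\rho(u,v)\le\hat{d}_\rho(u,x)+\hat{d}_\rho(x,v)\le\bigl(\rho(y)-\rho(x)\bigr)+\bigl(\rho(y)-\rho(x)\bigr)\le 2\hat{d}_\rho(x,y),
\]
where I use $\hat{d}_\rho(u,x)=\hat{d}_\rho(x,u)\le\rho(y)-\rho(x)$ (symmetry of $\hat d_\rho$, plus the bound above applied to $u$) and similarly for $v$. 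Taking the supremum over all $u,v\in J(x,y)$ gives $\mathrm{diam}_{\hat{d}_\rho}(J(x,y))\le 2\hat{d}_\rho(x,y)$, which is exactly the statement (note the statement is phrased with $x,y$ in place of $p,q$, and the hypothesis $p\le q$ is what makes $J(x,y)$ nonempty and the argument run).

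The main technical obstacle I anticipate is making the estimate $\hat{d}_\rho(x,u)\le\rho(u)-\rho(x)$ fully rigorous. One must produce an admissible piecewise causal curve from $x$ to $u$ whose null length is controlled: the future-directed causal curve from $x$ to $u$ (which exists by causal path connectedness) is itself piecewise causal with a one-piece partition, and along it $\rho$ is increasing, so its null length equals $\rho(u)-\rho(x)$ exactly; hence $\hat{d}_\rho(x,u)\le\rho(u)-\rho(x)$. A subtlety is that $u\in J(x,y)$ only gives $x\le u$, not $x\ll u$, so one should check that causal path connectedness (not merely timelike path connectedness) is available — it is, by the definition of scc in Definition \ref{Def scc LpLS}, and indeed the definition of causally path connected includes the causal case $x<u$. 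If $u=x$ or $u=y$ the bounds are trivial. Once this lemma is in place, the promised consequence $\mathcal{W}^N=\mathcal{V}^N$ with respect to a null distance follows because a uniform bound $d(a_i,b_i)<\delta$ then forces $\mathrm{diam}_{\hat d_\rho}(J(a_i,b_i))<2\delta$, so the two families of admissible covers are comparable.
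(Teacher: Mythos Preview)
Your argument is correct and is essentially what the paper's one-line proof defers to: the paper simply cites Proposition~3.8(ii),(iv) of \cite{Kun;Ste;NULLd}, whose content is precisely that $\hat d_\rho(x,u)=\rho(u)-\rho(x)$ whenever $x\le u$ and that $|\rho(y)-\rho(x)|\le\hat d_\rho(x,y)$ in general, from which the diameter bound follows by the triangle inequality exactly as you wrote. In other words, you have reproved the relevant pieces of the cited proposition in a self-contained way; there is no substantive difference in approach.
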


\begin{proof}
  We can get the result directly from $(ii)$ and $(iv)$ of Proposition 3.8 in $\cite{Kun;Ste;NULLd}$.
\end{proof}

The following theorem is a direct implication of Lemma \ref{Lem diam for d_rho}.

\begin{Theorem}\label{Th V^n=W^n null distance}
Let $X$ be a locally compact scc Lorentzian pre-length space with a generalized time function $\rho$ and $\hat{d}_{\rho}$ be the null distance with respect to $\rho$. Then, $\mathcal{W}^{N}$ coincides with $\mathcal{V}^{N}$ with respect to $\hat{d}_{\rho}$.

\end{Theorem}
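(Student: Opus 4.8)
The plan is to deduce the equality $\mathcal{W}^{N}=\mathcal{V}^{N}$ (both taken with respect to the null distance $\hat d_\rho$) from the general inequality $\mathcal{W}^{N}\le\mathcal{V}^{N}$, which already holds on any Lorentzian pre-length space, together with the reverse inequality $\mathcal{V}^{N}\le\mathcal{W}^{N}$, which is where Lemma \ref{Lem diam for d_rho} enters. First I would recall that by definition $\mathcal{W}^{N}(A)\le\mathcal{V}^{N}(A)$ for every $A\subseteq X$, since every admissible cover in the definition of $\mathcal{V}^{N}_\delta$ (those with $\mathrm{diam}_{\hat d_\rho}(J(a_i,b_i))<\delta$) is also admissible in the definition of $\mathcal{W}^{N}_\delta$ (those merely require $\hat d_\rho(a_i,b_i)<\delta$), so the infimum defining $\mathcal{W}^{N}_\delta$ is over a larger family.

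For the reverse direction, fix $\delta>0$ and take any cover $\{J(a_i,b_i)\}_i$ of $A$ admissible for $\mathcal{W}^{N}_{\delta/2}$, i.e.\ with $\hat d_\rho(a_i,b_i)<\delta/2$. By Lemma \ref{Lem diam for d_rho}, $\mathrm{diam}_{\hat d_\rho}(J(a_i,b_i))\le 2\hat d_\rho(a_i,b_i)<\delta$, so this same cover is admissible for $\mathcal{V}^{N}_\delta$. Since $\rho_N(J(a_i,b_i))=\omega_N\tau(a_i,b_i)^N$ depends only on the causal diamond and not on which distance we use, the sums $\sum_i\rho_N(J(a_i,b_i))$ coincide, and taking the infimum over all such covers gives $\mathcal{V}^{N}_\delta(A)\le\mathcal{W}^{N}_{\delta/2}(A)$. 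Letting $\delta\to 0$ (equivalently taking the supremum over $\delta$, noting both $\mathcal{V}^{N}_\delta$ and $\mathcal{W}^{N}_\delta$ are monotone nonincreasing in $\delta$), we obtain $\mathcal{V}^{N}(A)\le\mathcal{W}^{N}(A)$. Combined with the previous paragraph this yields $\mathcal{V}^{N}(A)=\mathcal{W}^{N}(A)$ for all $A$, which is the claim.

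There is essentially no hard step: the content is entirely packaged into Lemma \ref{Lem diam for d_rho}, and the only things to be careful about are (a) checking that the hypotheses of that lemma — $X$ scc, $\rho$ a generalized time function — are exactly those assumed in the theorem (they are, since local compactness is only used to ensure $\hat d_\rho$ is a genuine metric and that the topology matches, which is not needed for the measure identity itself), and (b) the bookkeeping with the factor $2$ relating the cutoff scales $\delta$ and $\delta/2$, which disappears in the limit. I would also remark that the argument shows more: any distance $d$ on $X$ for which $\mathrm{diam}_d(J(x,y))\le C\,d(x,y)$ holds for some uniform constant $C>0$ forces $\mathcal{W}^{N}=\mathcal{V}^{N}$, the null distance being the relevant example here via Lemma \ref{Lem diam for d_rho}.
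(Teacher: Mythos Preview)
Your proof is correct and is exactly the argument the paper has in mind: the paper simply declares the theorem to be ``a direct implication of Lemma~\ref{Lem diam for d_rho}'' without spelling out the details, and your write-up supplies precisely those details (the trivial inequality $\mathcal{W}^{N}\le\mathcal{V}^{N}$, the reverse via $\mathrm{diam}_{\hat d_\rho}(J(a_i,b_i))\le 2\hat d_\rho(a_i,b_i)$ yielding $\mathcal{V}^{N}_{\delta}\le\mathcal{W}^{N}_{\delta/2}$, and the passage to the limit).
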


\subsection{Null distance and $\mathrm{vol}^{g}=\mathcal{W}^{N}$ on globally hyperbolic smooth spacetimes}

We call $X$ a smooth spacetime if $X$ is a connected $N$-manifold equipped with a time oriented and $C^{k}$-differentiable Lorentzian metric $g$, where $k \in \mathbb{N} \cup \{\infty\}$. Any smooth spacetime is a scc Lorentzian pre-length space. In \cite{Ger}, \cite{Ber;San}, and \cite{Ber;San2}, splitting theorems and the existence of time functions on smooth spacetimes are studied. According to these studies, globally hyperbolic smooth spacetimes have time functions and definite null distances. Moreover in \cite[Theorem 1.7]{Bur;Gar}, it is shown that on globally hyperbolic spacetimes, we have a null distance which is locally bi-Lipschitz to any given Riemannian distance. 

The following theorem stems from Lemma 2.6 and Theorem 4.2 in $\cite{Bur;Gar}$.
\begin{Theorem}\label{Th d_h bi-Lipschitz d_rho}
Let $M$ be a globally hyperbolic smooth spacetime and $h$ be any Riemannian metric on $M$. We denote the Riemannian distance with respect to $h$ by $d_{h}$. Then, there exists a time function $\rho$ on $M$, and the null distance $\hat{d}_{\rho}$ is definite. Moreover, for any $p\in M$, there exists a neighborhood $U_{p}$ of $p$ and a constant $C\ge1$ such that
\[\frac{1}{C}d_{h}(x,y) \le \hat{d}_{\rho}(x,y) \le Cd_{h}(x,y)\]
holds for all $x,y\in U_{p}$.
\end{Theorem}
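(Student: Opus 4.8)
The strategy is to assemble the statement from the Bernal--Sánchez splitting theorem together with the two cited results of Burtscher and García--Heveling; essentially no new computation is required beyond putting these pieces together. First I would recall that a globally hyperbolic smooth spacetime $M$ admits a smooth Cauchy temporal function by \cite{Ber;San}, \cite{Ber;San2} (building on \cite{Ger}); in particular $M$ carries a time function, so, being a locally compact scc Lorentzian pre-length space, it has by Lemma~\ref{Lem existence piecewise causal cueve} and Lemma~\ref{Lem d_rho continuous} a well-defined continuous finite null pseudo-metric for any such time function. The task is therefore to choose the time function carefully.

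Fix the Riemannian metric $h$ and its distance $d_{h}$. By \cite[Theorem~4.2]{Bur;Gar} one may choose the temporal function $\rho$ to be \emph{steep} with respect to $h$, so that its $g$-gradient is everywhere past-directed timelike with $g(\nabla\rho,\nabla\rho)\le -1$ and, more importantly, the causal cones of $g$ lie uniformly ``below'' the level sets of $\rho$ on $h$-balls of bounded radius. From this I would extract the lower bound: localized on a relatively compact neighborhood $U_{p}$, steepness makes $\rho$ anti-Lipschitz on $U_{p}$ in the sense of Definition~\ref{Def Locally anti-Lipschitz}, i.e.\ $\rho(y)-\rho(x)\ge \tfrac{1}{C}\,d_{h}(x,y)$ whenever $x\le y$ in $U_{p}$. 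Summing this estimate over the causal pieces of any piecewise causal curve $\beta$ in $U_{p}$ from $x$ to $y$ gives $\hat{L}_{\rho}(\beta)\ge \tfrac{1}{C}\,d_{h}(x,y)$, hence $\hat{d}_{\rho}(x,y)\ge \tfrac{1}{C}\,d_{h}(x,y)$. The anti-Lipschitz property moreover yields, through Proposition~\ref{Pro d_rho definite}, that $\hat{d}_{\rho}$ is definite, which is the second assertion of the theorem.

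For the upper bound I would invoke \cite[Lemma~2.6]{Bur;Gar}: since $\rho$ is smooth it is locally Lipschitz with respect to $d_{h}$, and for $x,y$ in a possibly smaller neighborhood one can connect them by a short piecewise causal curve $\beta$ — for instance by going up from $x$ and from $y$ to a common point $r\in I^{+}(x)\cap I^{+}(y)$ with $d_{h}(x,r)+d_{h}(y,r)\le C'\,d_{h}(x,y)$ — so that $\hat{L}_{\rho}(\beta)\le \mathrm{Lip}(\rho)\bigl(d_{h}(x,r)+d_{h}(y,r)\bigr)\le C''\,d_{h}(x,y)$; taking the infimum over such curves gives $\hat{d}_{\rho}(x,y)\le C''\,d_{h}(x,y)$. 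Shrinking $U_{p}$ and enlarging the constant so that both inequalities hold on a single neighborhood completes the proof.

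The main obstacle is the lower, anti-Lipschitz, estimate: the null length of a piecewise causal curve records only the total variation of $\rho$ along the curve and a priori controls nothing transverse to $\nabla\rho$, so one must turn the infinitesimal steepness inequality into a comparison between $\hat{d}_{\rho}$ and $d_{h}$ that is uniform on a whole neighborhood. This is exactly where global hyperbolicity — guaranteeing that the causal cones of $g$ do not degenerate on compact sets — and the precise construction in \cite[Theorem~4.2]{Bur;Gar} are used; the remaining steps are routine local comparisons among $g$, $h$, and $\rho$. As a byproduct, combining the bi-Lipschitz equivalence with Proposition~\ref{Pro d_rho topology} shows that $\hat{d}_{\rho}$ induces the manifold topology.
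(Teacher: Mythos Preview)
Your proposal is correct and aligns with the paper's treatment: the paper does not give an independent proof of this theorem at all, but simply records it as a consequence of \cite[Lemma~2.6 and Theorem~4.2]{Bur;Gar}, which are precisely the two results you invoke. Your sketch of how the steep temporal function yields the anti-Lipschitz lower bound and how the local Lipschitz property of $\rho$ together with a short piecewise causal construction yields the upper bound is more detailed than anything in the paper, but it is exactly the content of those cited results, so there is nothing to correct.
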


\begin{Theorem}\label{Th Volg=W^n}
Let $M$ be an $N$-dimensional globally hyperbolic smooth spacetime, and $\rho$ and $\hat{d}_{\rho}$ be as in Theorem 4.10. Then, $\mathcal{V}^{N}_{d_{h}}=\mathcal{V}^{N}_{\hat{d}_{\rho}}$, and in particular $\mathcal{W}^{N}_{\hat{d}_{\rho}}=\mathrm{vol}^{g}$ on $M$. Here, $\mathcal{V}^{N}_{d_{h}}$, $\mathcal{V}^{N}_{\hat{d}_{\rho}}$ ,and $\mathcal{W}^{N}_{\hat{d}_{\rho}}$ are with respect to given Riemannian distance $d_{h}$ and the null distance $\hat{d}_{\rho}$ on $M$, respectively. 
\end{Theorem}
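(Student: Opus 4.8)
The plan is to establish the chain of equalities
\[
\mathcal{W}^{N}_{\hat{d}_{\rho}} \;=\; \mathcal{V}^{N}_{\hat{d}_{\rho}} \;=\; \mathcal{V}^{N}_{d_{h}} \;=\; \mathrm{vol}^{g},
\]
reading the first equality off Theorem \ref{Th V^n=W^n null distance}, the last one off Theorem \ref{Th Volg=V^n}, and proving the middle one by a lemma asserting that $\mathcal{V}^{N}$ is unchanged when the background distance is replaced by a locally bi-Lipschitz one. First I would check that the ambient hypotheses are met: a globally hyperbolic smooth spacetime $M$ is strongly causal (by the causal ladder, global hyperbolicity implies strong causality), is continuous and causally plain as a spacetime, is a locally compact scc Lorentzian pre-length space, and the $\rho$ supplied by Theorem \ref{Th d_h bi-Lipschitz d_rho} is a time function, hence a generalized time function; moreover $\hat{d}_{\rho}$ is a genuine distance (definite, Theorem \ref{Th d_h bi-Lipschitz d_rho}) inducing the same topology as $d_{h}$, so $\tau$ stays lower semicontinuous and $(M,\hat{d}_{\rho},\le,\ll,\tau)$ is again a Lorentzian pre-length space, on which $\mathcal{V}^{N}_{\hat{d}_{\rho}}$ and $\mathcal{W}^{N}_{\hat{d}_{\rho}}$ are defined and coincide by Theorem \ref{Th V^n=W^n null distance}.

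The heart of the argument is the following observation. In the construction of $\mathcal{V}^{N}$ the collection $\mathcal{J}$ of causal diamonds and the weight $\rho_{N}(J(x,y))=\omega_{N}\tau(x,y)^{N}$ are intrinsic to the causal structure and to $\tau$; the background distance enters \emph{only} through the admissibility constraint $\mathrm{diam}_{d}(A_{i})<\delta$. Hence, if $d_{1}$ and $d_{2}$ are two distances on $M$ compatible with the same Lorentzian pre-length structure and $d_{2}\le C d_{1}$ on a set containing a family $\{A_{i}\}$ of causal diamonds covering $A$ with $\mathrm{diam}_{d_{1}}(A_{i})<\delta$, then the same family is admissible for $\mathcal{V}^{N}_{C\delta,\,d_{2}}(A)$, whence $\mathcal{V}^{N}_{C\delta,\,d_{2}}(A)\le \mathcal{V}^{N}_{\delta,\,d_{1}}(A)$; letting $\delta\downarrow 0$ the factor $C$ disappears since $\sup_{\delta}=\lim_{\delta\downarrow 0}$. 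To upgrade the \emph{local} bi-Lipschitz equivalence of $d_{h}$ and $\hat{d}_{\rho}$ from Theorem \ref{Th d_h bi-Lipschitz d_rho} to a statement on all of $M$, I would first treat a relatively compact $A$: cover $\overline{A}$ by finitely many bi-Lipschitz neighborhoods, take $C$ the maximum of the corresponding constants and $\eta$ a Lebesgue number of this finite cover, so that every causal diamond of diameter $<\eta$ meeting $A$ lies in one of the neighborhoods; the displayed comparison then applies with this uniform $C$ for all $\delta<\eta$, and running it in both directions gives $\mathcal{V}^{N}_{d_{h}}(A)=\mathcal{V}^{N}_{\hat{d}_{\rho}}(A)$. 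For general $A$ one writes $M=\bigcup_{n}K_{n}$ with $K_{n}$ compact and increasing (possible since $M$ is $\sigma$-compact), notes that $A\cap K_{n}$ is relatively compact and increases to $A$, and concludes by continuity from below of the Borel measures $\mathcal{V}^{N}_{d_{h}}$ and $\mathcal{V}^{N}_{\hat{d}_{\rho}}$.

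Combining, $\mathcal{V}^{N}_{d_{h}}=\mathcal{V}^{N}_{\hat{d}_{\rho}}$, which is the first assertion. For the second, the same invariance lemma applied to $d_{h}$ and the distance $d_{h_{0}}$ of a fixed \emph{complete} Riemannian metric $h_{0}$ on $M$ (any two continuous Riemannian metrics on a manifold are locally bi-Lipschitz, being bi-Lipschitz on every compact set) gives $\mathcal{V}^{N}_{d_{h}}=\mathcal{V}^{N}_{d_{h_{0}}}$, and Theorem \ref{Th Volg=V^n} identifies $\mathcal{V}^{N}_{d_{h_{0}}}$ with $\mathrm{vol}^{g}$; together with $\mathcal{W}^{N}_{\hat{d}_{\rho}}=\mathcal{V}^{N}_{\hat{d}_{\rho}}$ this closes the chain and yields $\mathcal{W}^{N}_{\hat{d}_{\rho}}=\mathrm{vol}^{g}$. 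The only genuinely delicate point is obtaining an \emph{equality} rather than a $C^{N}$-loss from the bi-Lipschitz bounds: this hinges precisely on the metric-independence of the weight $\rho_{N}$, so that the constant is confined to the diameter threshold and is washed out in the limit $\delta\downarrow 0$; the remaining work — the Lebesgue-number localization, the $\sigma$-compact exhaustion, and the verification of the causality and topology hypotheses cited above — is routine bookkeeping.
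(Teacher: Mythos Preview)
Your proposal is correct and follows essentially the same route as the paper: reduce $\mathcal{V}^{N}_{d_{h}}=\mathcal{V}^{N}_{\hat{d}_{\rho}}$ to compact sets, use the local bi-Lipschitz equivalence from Theorem~\ref{Th d_h bi-Lipschitz d_rho} together with a Lebesgue-number argument to transfer the diameter constraint (the weight $\rho_{N}$ being metric-independent), and then invoke Theorems~\ref{Th V^n=W^n null distance} and~\ref{Th Volg=V^n} for the remaining links in the chain. Your additional care in verifying the scc and topology hypotheses, in passing to general sets by $\sigma$-compact exhaustion, and in bridging to a \emph{complete} Riemannian metric before applying Theorem~\ref{Th Volg=V^n} are welcome refinements but do not change the underlying strategy.
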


\begin{proof}
First, we show that $\mathcal{V}^{N}_{\hat{d}_{\rho}}=\mathcal{V}^{N}_{d_{h}}$. Since $\mathcal{V}^{N}_{\hat{d}_{\rho}}$ is a Borel measure with respect to the topology induced from $d_{h}$, the coincidence of $\mathcal{V}^{N}_{\hat{d}_{\rho}}$ and $\mathcal{V}^{N}_{d_{h}}$ is equivalent to
\[\mathcal{V}^{N}_{\hat{d}_{\rho}}(A)=\mathcal{V}^{N}_{d_{h}}(A) \textrm{\quad for all compact subsets $A\subseteq X$}.\]

Let $A\subseteq X$ be a compact set. Take a finite cover $\{B_{i}\}_{i=1}^{m}$ of $A$ and a sequence $\{C_{i}\}^{m}_{i=1}$ with $C_{i} \ge1$ such that for all $x,y\in B_{i}$
\[\frac{1}{C_{i}}d_{h}(x,y) \le \hat{d}_{\rho}(x,y) \le C_{i}d_{h}(x,y)\]
holds. Let $\lambda$ be the Lebesgue number of $\{B_{i}\}_{i=1}^{m}$. We set $C\coloneq \displaystyle\max_{i=1,2,\dots, m}C_{i}$. Take any $\delta \in (0,\lambda)$ and any countable cover $\{J_{k}\}$ of $A$ such that $J_{k}$ is a causal diamond and $\mathrm{diam}_{{d_{h}}}(J_{k}) \le \delta$. Then, we can assume $J_{k}\subset B_{i}$ for some $i$, and hence $\mathrm{diam}_{{\hat{d}_{\rho}}}(J_{k}) \le \delta C$. Therefore, we have $\mathcal{V}^{N}_{\hat{d}_{\rho},\,\delta C}(A) \le \mathcal{V}^{N}_{d_{h}, \,\delta}(A),$ and with $\delta \to 0$, $\mathcal{V}^{N}_{\hat{d}_{\rho}}(A) \le \mathcal{V}^{N}_{d_{h}}(A)$ follows. Similarly, we have $\mathcal{V}^{N}_{d_{h},\delta C}(A) \le \mathcal{V}^{N}_{\hat{d}_{\rho}, \,\delta}(A)$, and with $\delta \to 0$, $\mathcal{V}^{N}_{d_{h}}(A) \le \mathcal{V}^{N}_{\hat{d}_{\rho}}(A)$ follows, and we have $\mathcal{V}^{N}_{d_{h}}(A) = \mathcal{V}^{N}_{\hat{d}_{\rho}}(A)$.

Next, $\mathcal{W}^{N}_{\hat{d}_{\rho}} = \mathcal{V}^{N}_{\hat{d}_{\rho}}$ follows immediately from Theorem \ref{Th V^n=W^n null distance}. Therefore, we have $\mathrm{vol}^{g}=\mathcal{W}^{N}_{\hat{d}_{\rho}}$ by Theorem \ref{Th Volg=V^n}.
\end{proof}

\subsection{$\mathcal{W}^{N}=\mathcal{V}^{N}$ on continuous spacetimes}
For continuous spacetimes, it is not known if any globally hyperbolic spacetime possesses definite null distances. In this subsection, we show the coincidence of $\mathcal{V}^{N}$ and $\mathcal{W}^{N}$ on continuous compact spacetimes through cylindrical neighborhoods, which is introduced and investigated in \cite{Chr;Gra;C0}, \cite{Lin;C0}, and \cite[Lemma 4.2]{Mc;Sa}.

\begin{Lemma}[Cylindrical neighborhood]\label{Lem cylindrical neighborhood}
Let $C>1$ and $M$ be an $n$-dimensional spacetime with a continuous metric $g$. Then, for each $x$ $\in X$, we can take a pair of neighborhoods $(W,W')$ of $x$ satisfying the following
  \begin{description}

    \item{(1)} $W$ is an open, connected, and relatively compact coordinate chart such that 
       \[\eta_{C^{-1}} \prec g \prec \eta_{C} \text{ on } W,\]
    where $\eta_{C}$ is the scaled Minkowski metric on $\mathbb{R}^{n}$ i.e.
       \[\eta_{C}:=-C^{2}(dt)^{2}+(dx_{1})^{2}+\cdots+(dx_{n-1})^{2}.\]
      Here, we define $g \prec h$ if for any $\upsilon\neq0$, $g(\upsilon,\upsilon)\le0$ implies $h(\upsilon,\upsilon)<0$.
    \item{(2)} $W=(-B,B)\times Z$ with $B>0$ and $Z\subseteq \mathbb{R}^{n-1}$  where $x$ has the coordinates $(0,0)$.

    \item{(3)} $\partial_{t}=\partial_{x_{0}}$ is uniformly timelike on $W$.

    \item{(4)} There is a neighborhood $W'\subseteq (-b,b)\times V \subseteq W$ of $x$ such that $W'$ is causally convex in $W$ i.e. $J(p,q)\subseteq W$ for all $p,q\in W'$.

    \item{(5)} We can take $W$ arbitrarily small and globally hyperbolic.

  \end{description}

\end{Lemma}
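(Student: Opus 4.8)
The plan is to reproduce the cylindrical-neighbourhood construction of \cite{Chr;Gra;C0} and \cite[Lemma 4.2]{Mc;Sa}. First I would fix a coordinate chart centred at $x$ in which $g(x)$ equals the standard Minkowski metric $\eta=\eta_{1}$ and in which $\partial_{t}=\partial_{x_{0}}$ is future-directed $\eta$-timelike at $x$; this is a linear change of coordinates together with the choice of time orientation. By continuity of $g$ the chart may be shrunk to a coordinate cube $(-B,B)\times Z$ with $x=(0,0)$ on which $\sup\lVert g-\eta\rVert$ is as small as we wish, which is already assertion (2), and which also makes $W$ relatively compact and connected.

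Next I would convert $C^{0}$-closeness into the cone inclusions in (1). The set of Euclidean-unit vectors that are $\eta_{C^{-1}}$-causal is compact, and on it $\eta(v,v)$ attains a strictly negative maximum; likewise $\eta_{C}(w,w)$ is uniformly negative on the compact set of Euclidean-unit $\eta$-causal vectors $w$. Hence if the cube is chosen small enough that $\sup_{W}\lVert g-\eta\rVert$ lies below both margins, every $\eta_{C^{-1}}$-causal vector is $g$-timelike and every $g$-causal vector is $\eta_{C}$-timelike, i.e. $\eta_{C^{-1}}\prec g\prec\eta_{C}$ on $W$. Since $\partial_{t}$ is $\eta_{C^{-1}}$-timelike with a definite margin and $\overline{W}$ is compact, the same estimate yields $g(\partial_{t},\partial_{t})\le-\kappa<0$ on $\overline{W}$, which is (3).

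For the causal convexity (4), the key mechanism is that $g\prec\eta_{C}$ forces every future-directed $g$-causal curve that remains in $W$ to be $\eta_{C}$-timelike, hence to have strictly increasing $t$-coordinate and spatial displacement at most $C$ times the elapsed $t$. Choosing $W'\subseteq(-b,b)\times V\subseteq W$ with $V$ a small ball, $b<B$, and the radius of $V$ plus $C(B+b)$ still less than the radius of $Z$, any $g$-causal curve with both endpoints in $W'$ that stays in $W$ is confined to a relatively compact subset of $W$. Promoting this to $J(p,q)\subseteq W$ for all $p,q\in W'$, i.e. controlling causal curves of the ambient $M$ rather than only of $W$, is the delicate point; following \cite{Chr;Gra;C0,Lin;C0} I would sandwich $g$ between smooth metrics $\check g\prec g\prec\hat g$ with $\eta_{C^{-1}}\prec\check g$ and $\hat g\prec\eta_{C}$ on $W$, invoke the smooth causal theory for $\hat g$ (convex neighbourhoods, non-imprisonment) together with $J^{+}_{g}\subseteq J^{+}_{\hat g}$, and use the steep time function $t$ to exclude causal curves that leave $W$ and return.

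Finally (5) is essentially free: the entire construction lives inside an arbitrarily small chart, and global hyperbolicity of $(W,g)$ can be inherited by arranging the region so that $(W,\eta_{C})$ is globally hyperbolic and transferring via $J^{\pm}_{g}\subseteq J^{\pm}_{\eta_{C}}$, the closedness of $g$-causal diamonds (hence their compactness), and the absence of closed $\eta_{C}$-causal curves. I expect the main obstacle to be exactly the $M$-versus-$W$ issue inside (4): ruling out ambient causal curves that temporarily exit $W$, which is where continuity of $g$, the smooth inner and outer approximations, and the monotonicity of $t$ along causal curves must be combined carefully; the remaining parts are routine compactness-and-uniform-continuity arguments.
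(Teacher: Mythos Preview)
The paper does not give its own proof of this lemma: it is stated with attribution to \cite{Chr;Gra;C0}, \cite{Lin;C0}, and \cite[Lemma 4.2]{Mc;Sa}, and then used as a black box in the proof of Theorem~\ref{Th W^n=v^n continuous spacetime}. Your proposal is a faithful reconstruction of exactly that literature argument---choosing a chart in which $g(x)=\eta$, shrinking by continuity to obtain the cone sandwich $\eta_{C^{-1}}\prec g\prec\eta_{C}$, using $g\prec\eta_{C}$ to make $t$ a steep time function controlling causal curves, and handling the ambient-versus-local causality issue in (4) via smooth approximating metrics---so there is nothing to compare beyond noting that you have supplied the details the paper deliberately outsources.
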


We call $(W,W')$ a \emph{cylindrical neighborhood} of $x$.

\begin{Theorem}\label{Th W^n=v^n continuous spacetime}Let $(M,g)$ be an $N$-dimensional compact and continuous strongly causal spacetime. Then, $\mathcal{W}^{N}=\mathcal{V}^{N}$. Here, $\mathcal{W}^{N}$ and $\mathcal{V}^{N}$ are with respect to any given complete Riemannian distance.
\end{Theorem}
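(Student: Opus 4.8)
The plan is to prove the two inequalities separately. One always has $\mathcal{W}^{N}\le\mathcal{V}^{N}$ on any Lorentzian pre-length space, as observed right after Definition \ref{Def W^n}, so the whole content is the reverse bound $\mathcal{V}^{N}(A)\le\mathcal{W}^{N}(A)$ for every $A\subseteq M$; establishing this for all $A$ gives $\mathcal{W}^{N}=\mathcal{V}^{N}$ as measures. The difficulty is that on a continuous spacetime a causal diamond $J(a,b)$ with $d_{h}(a,b)$ small need not have small $d_{h}$-diameter, so one cannot convert a $\mathcal{W}^{N}$-cover into a $\mathcal{V}^{N}$-cover directly. The remedy is that inside a cylindrical neighborhood the $g$-cones are trapped between two scaled Minkowski cones, which forces causal diamonds with nearby vertices to be genuinely thin.

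First I would set up the neighborhoods. Fix $C>1$. By Lemma \ref{Lem cylindrical neighborhood} each $x\in M$ has a cylindrical neighborhood $(W_{x},W'_{x})$; shrinking, I may assume $W_{x}$ is a convex coordinate box $(-B,B)\times Z$ with $Z$ an open ball, that $\overline{W'_{x}}\subseteq W_{x}$ is compact, and that on $W_{x}$ the Riemannian metric $h$ in the chart coordinates is bi-Lipschitz comparable to the chart Euclidean metric with a constant $\kappa_{x}\ge1$. By compactness of $M$ I extract a finite subcover $W'_{x_{1}},\dots,W'_{x_{m}}$, let $\lambda_{0}>0$ be a Lebesgue number of $\{W'_{x_{j}}\}$, put $r_{j}:=d_{h}(\overline{W'_{x_{j}}},M\setminus W_{x_{j}})>0$, and set $\delta_{0}:=\min\{\lambda_{0},r_{1},\dots,r_{m}\}$ and $C_{*}:=\sqrt{1+4C^{2}}\cdot\max_{j}\kappa_{x_{j}}^{2}$.

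The key geometric step is: if $a,b\in W'_{x_{j}}$ with $a\le b$ and $d_{h}(a,b)<\delta_{0}$, then $\mathrm{diam}_{d_{h}}(J(a,b))\le C_{*}\,d_{h}(a,b)$. By the causal convexity in property (4) of Lemma \ref{Lem cylindrical neighborhood} one has $J(a,b)\subseteq W_{x_{j}}$; moreover any future causal curve from $a$ to a point $r\in J(a,b)$ stays inside $J(a,b)\subseteq W_{x_{j}}$, since each of its points lies in $J(a,r)\subseteq J(a,b)$. On $W_{x_{j}}$ the cone comparison $g\prec\eta_{C}$ makes such a curve $\eta_{C}$-timelike in the chart, so $a\le_{\eta_{C}}r\le_{\eta_{C}}b$; hence $J(a,b)$ is contained in the $\eta_{C}$-causal diamond $J_{\eta_{C}}(a,b)$, and a direct computation with the $\eta_{C}$-cones gives $\mathrm{diam}_{\mathrm{Eucl}}(J_{\eta_{C}}(a,b))\le\sqrt{1+4C^{2}}\,|b-a|_{\mathrm{Eucl}}$. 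Since $d_{h}(a,b)<\delta_{0}\le r_{j}$ forces near-minimizing paths to stay in $W_{x_{j}}$, we get $|b-a|_{\mathrm{Eucl}}\le\kappa_{x_{j}}d_{h}(a,b)$, while convexity of $W_{x_{j}}$ gives $\mathrm{diam}_{d_{h}}\le\kappa_{x_{j}}\mathrm{diam}_{\mathrm{Eucl}}$ there; combining yields the claim. Now fix $A\subseteq M$ and $0<\delta<\delta_{0}$ and take any countable cover $\{J(a_{i},b_{i})\}_{i}$ of $A$ by causal diamonds with $d_{h}(a_{i},b_{i})<\delta$; each pair $\{a_{i},b_{i}\}$ has $d_{h}$-diameter $<\delta\le\lambda_{0}$, hence lies in some $W'_{x_{j(i)}}$, so $\mathrm{diam}_{d_{h}}(J(a_{i},b_{i}))\le C_{*}\delta$. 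Therefore $\mathcal{V}^{N}_{C_{*}\delta}(A)\le\sum_{i}\rho_{N}(J(a_{i},b_{i}))$, and taking the infimum gives $\mathcal{V}^{N}_{C_{*}\delta}(A)\le\mathcal{W}^{N}_{\delta}(A)\le\mathcal{W}^{N}(A)$; letting $\delta\to0$ (so $C_{*}\delta\to0$) yields $\mathcal{V}^{N}(A)\le\mathcal{W}^{N}(A)$, and with $\mathcal{W}^{N}\le\mathcal{V}^{N}$ we conclude $\mathcal{W}^{N}=\mathcal{V}^{N}$.

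The main obstacle is the key geometric step, specifically two points: verifying that a causal curve from $a$ to $r\in J(a,b)$ stays inside $W_{x_{j}}$, so that the cone comparison on $W_{x_{j}}$ may legitimately be applied (this is exactly where property (4) is needed, and is what lets us avoid the circular move of assuming the diamonds are small in order to prove they are small); and arranging the hierarchy $(W_{x},W'_{x})$ together with the bi-Lipschitz comparison between $d_{h}$ and the chart Euclidean metric so that the constants $C_{*}$ and $\delta_{0}$ can be chosen uniformly over the finite subcover, which is where compactness of $M$ enters.
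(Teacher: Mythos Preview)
Your argument is correct and follows essentially the same strategy as the paper's proof: cover by cylindrical neighborhoods, use $g\prec\eta_{C}$ on each chart to trap $J(a,b)$ inside an $\eta_{C}$-diamond whose Euclidean diameter is linearly controlled by $|b-a|_{\mathrm{Eucl}}$, and then pass to $d_{h}$ via a bi-Lipschitz comparison to conclude $\mathrm{diam}_{d_{h}}(J(a,b))\le C_{*}\,d_{h}(a,b)$ uniformly for small $d_{h}(a,b)$. Your version is in fact a bit more careful than the paper's---you cover $M$ rather than $A$ (needed since the vertices $a_{i},b_{i}$ of a $\mathcal{W}^{N}$-cover need not lie near $A$), you make explicit why causal curves from $a$ to points of $J(a,b)$ remain in $W_{x_{j}}$ before invoking $g\prec\eta_{C}$, and you introduce the radii $r_{j}$ to justify the passage from $d_{h}$ to the chart Euclidean distance---but the underlying mechanism is the same.
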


\begin{proof}

Let $A\subseteq X$, $h$ be a Riemannian metric on $X$, and $d_{h}$ be the Riemannian distance of $h$. We can easily see that $\mathcal{W}^{N}(A)\le\mathcal{V}^{N}(A)$ from their definitions. To see the reverse inequality, we will show that $\mathrm{diam}_{d_{h}}(J(x,y))\to 0$ uniformly as $d_{h}(x,y)\to 0$ on $X$. Take a finite family of cylindrical neighborhoods $\{(W_{i},W'_{i})\}_{i=1}^{L}$ for $C=2$ such that $\{W'_{i}\}_{i=1}^{L}$ is a finite covering of $A$. Let $D_{i}$ be the Euclidean distance on $W_i =I_i \times Z_i$. Then, for each $i=1,2, \dots, L$, we can take $C_{i}\ge1$ such that $\frac{1}{C_{i}}d_{h}(x,y)\le D_{i}(x, y)\le C_{i}d_{h}(x,y)$ for $x,y\in W$. Let $C'\coloneq \displaystyle\max_{i=1,2,\dots, L}C_{i}$. Let $\delta$ be the Lebesgue number for the covering $\{W'_{i}\}_{i=1}^{L}$ of $A$. Then, take any causal diamond $J(p,q)$ such that $d_{h}(p,q) \le \epsilon \le \delta$ and we find that $J(p,q)\subset W_{i}=:I_{i}\times Z_{i}$ for some $i\in \{1,2,\ldots,L\}$. Let $d_{i}$ be the restriction of $D_{i}$ to $Z_{i}\subset \mathbb{R}^{N-1}$. Write $p=(S,x_{1})$ and $q=(T,x_{2})$ in $W_{i}$. As $g \prec \eta_{2}$ in $W_{i}$, we can see that
\[J(p,q)\subset [S,T]\times \bigl{\{}B^{D_{i}}_{x_{1}}\bigl{(}d_{i}(x_{1},x_{2})+2(T-S)\bigr{)}\cap Z_{i}\bigr{\}},\]
and by $D_{i}(p,q)=\sqrt{(T-S)^{2}+d_{i}(x_{1},x_{2})^{2}}\le C'\epsilon$, 
\[\begin{aligned}
   \mathrm{diam}_{{D_{i}}}(J(p,q)) &\le \sqrt{\{d_{i}(x_{1},x_{2})+2(T-S)\}^{2}+(T-S)^{2}}\\
                  &\le \sqrt{(C'\epsilon+2C'\epsilon)^{2}+C'^{2}\epsilon^{2}}\\
                  &= \sqrt{10}C'\epsilon.
\end{aligned}\]
Then, $\mathrm{diam}_{{d_{h}}}(J(p,q))\le\sqrt{10}C'^{2}\epsilon$ follows. Therefore, we have
\[\mathcal{V}_{\sqrt{10}C'^{2}\epsilon}^{N}(A) \le \mathcal{W}_{\epsilon}^{N}(A),\]
for $0<\epsilon<\delta$, and letting $\epsilon \to 0$ $\mathcal{V}^{N}(A)\le \mathcal{W}^{N}(A)$ follows.
\end{proof}

\subsection{$\mathcal{W}^{N}=\mathcal{V}^{N}$ on Lorentzian warped products}
In this subsection, we will summarize causal structures on Lorentzian warped products (recall Example \ref{Ex Lorentzian warped product}), and show that $\mathcal{V}^{N}$ coincides with $\mathcal{W}^{N}$ on them.

First, we will see causality conditions and the condition of the curvature bounded from below (above). 
\begin{Theorem}[\cite{Ale;Gra;Kun;Sa;cone} Theorem 4.8]\label{Th warped product as LpLS}
Let $(I\times _{f}X, D, \le, \ll, \tau)$ be a Lorentzian warped product. Assume that $(X, d)$ is a locally compact length space. Then $(I\times _{f}X, D, \le, \ll, \tau)$ is strongly causal.
\end{Theorem}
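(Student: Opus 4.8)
The plan is to work in \emph{conformal time}. Fix $t_{*}\in I$ and set $\psi(t)\coloneq\int_{t_{*}}^{t}\frac{dr}{f(r)}$; since $1/f$ is continuous and positive, $\psi\colon I\to\psi(I)$ is a strictly increasing $C^{1}$ diffeomorphism onto an open interval. The core of the argument is the explicit description of the chronological relation on $Y=I\times_{f}X$ (recall Example \ref{Ex Lorentzian warped product}):
\[(s,x)\ll(t,y)\quad\Longleftrightarrow\quad s<t\ \text{and}\ \psi(t)-\psi(s)>d(x,y).\]
For the forward implication, if $\gamma=(\alpha,\beta)$ is a future-directed timelike curve from $(s,x)$ to $(t,y)$, then $\alpha$ is strictly increasing and $-\dot\alpha^{2}+(f\circ\alpha)^{2}\upsilon_{\beta}^{2}<0$ a.e., so $\dot\alpha>(f\circ\alpha)\,\upsilon_{\beta}\ge0$ a.e.; applying the chain rule for the $C^{1}$ map $\psi$ composed with the absolutely continuous $\alpha$ and integrating gives $\psi(t)-\psi(s)=\int\frac{\dot\alpha}{f\circ\alpha}\,dt>\int\upsilon_{\beta}\,dt=L_{d}(\beta)\ge d(x,y)$, where the middle inequality is strict because the integrands differ on a set of full measure. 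For the reverse implication I would use that $X$ is a length space: since $d(x,y)<\psi(t)-\psi(s)$, pick a curve $\sigma$ in $X$ from $x$ to $y$ with $L_{d}(\sigma)<\psi(t)-\psi(s)$, reparametrize it with constant speed on $[0,1]$ so that $\upsilon_{\sigma}\equiv\ell<\psi(t)-\psi(s)$, and put $\alpha(r)\coloneq\psi^{-1}\bigl(\psi(s)+r(\psi(t)-\psi(s))\bigr)$; then $\alpha$ is $C^{1}$, strictly increasing from $s$ to $t$, and $\dot\alpha/(f\circ\alpha)\equiv\psi(t)-\psi(s)>\upsilon_{\sigma}$, so $\gamma\coloneq(\alpha,\sigma)$ is a future-directed timelike curve from $(s,x)$ to $(t,y)$.

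Granting this description, the two required topological inclusions follow quickly. First, for $p=(s,x)$ the set $I^{+}(p)=\{(t,y):t>s,\ (\psi(t)-\psi(s))-d(x,y)>0\}$ is the intersection of $\{t>s\}$ with the preimage of $(0,\infty)$ under a continuous function on $I\times X$, hence $D$-open; likewise $I^{-}(p)$ is $D$-open. Therefore every chronological diamond $I(x,y)=I^{+}(x)\cap I^{-}(y)$ is $D$-open, so the Alexandrov topology is contained in the topology induced by $D$.

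Second, for the reverse inclusion, given $p=(s_{0},x_{0})$ and a $D$-neighborhood $U$ of $p$, choose $r',r''>0$ with $(s_{0}-r',s_{0}+r')\times B_{d}(x_{0},r'')\subseteq U$, then pick $\eta\in(0,r')$ small enough that $\psi(s_{0}+\eta)-\psi(s_{0}-\eta)<r''$. Put $a\coloneq(s_{0}-\eta,x_{0})$ and $b\coloneq(s_{0}+\eta,x_{0})$. By the description above $a\ll p\ll b$, so $p\in I(a,b)$; and any $(t,z)\in I(a,b)$ satisfies $s_{0}-\eta<t<s_{0}+\eta$ and $d(x_{0},z)<\psi(t)-\psi(s_{0}-\eta)\le\psi(s_{0}+\eta)-\psi(s_{0}-\eta)<r''$, so $I(a,b)\subseteq U$. (This last step can also be done without the conformal formula, using only that for a causal curve $\dot\alpha\ge(f\circ\alpha)\upsilon_{\beta}$ with $f\circ\alpha$ bounded below on the compact time-slab.) Hence the $D$-topology is contained in the Alexandrov topology, the two coincide, and $I\times_{f}X$ is strongly causal.

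I expect the main difficulty to be the rigorous proof of the chronological description in this low-regularity setting: justifying the chain rule and the length inequality $L_{d}(\beta)\ge d(x,y)$ for merely absolutely continuous curves, the strictness in the timelike case, and the constant-speed reparametrization of a rectifiable curve in $X$. Local compactness of $X$, assumed in the statement, is not strictly needed for the steps above (length-connectedness suffices), but it is used in \cite{Ale;Gra;Kun;Sa;cone} for the stronger conclusion that $I\times_{f}X$ is a Lorentzian length space, e.g.\ for lower semicontinuity of $\tau$. A secondary point to watch is the precise treatment of the causal relation $\le$ and the equality case $\psi(t)-\psi(s)=d(x,y)$, though these are irrelevant to strong causality, which only involves $\ll$.
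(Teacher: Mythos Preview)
The paper does not prove this theorem at all: it is quoted verbatim as Theorem~4.8 of \cite{Ale;Gra;Kun;Sa;cone} and used as a black box, so there is no ``paper's own proof'' to compare against. Your argument is a self-contained proof of the cited result, and it is essentially correct. The conformal-time reparametrization $\psi(t)=\int_{t_{*}}^{t}f^{-1}$ together with the characterization $(s,x)\ll(t,y)\Leftrightarrow \psi(t)-\psi(s)>d(x,y)$ is exactly how the chronological structure of generalized cones is analysed in \cite{Ale;Gra;Kun;Sa;cone}, and once that formula is in hand the two topological inclusions follow as you indicate. Your justification of the strict inequality (integrating $\dot\alpha/(f\circ\alpha)>\upsilon_{\beta}$, which holds on a set of full and hence positive measure) is sound, as is the chain-rule step, since $\psi$ is $C^{1}$ and $\alpha$ has compact range so $\psi$ is Lipschitz there. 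Your observation that local compactness of $X$ is not actually used for strong causality---only that $X$ is a length space---is also correct; in \cite{Ale;Gra;Kun;Sa;cone} local compactness enters elsewhere (limit-curve arguments, lower semicontinuity of $\tau$, the full Lorentzian-length-space structure), not in the identification of the Alexandrov topology. One cosmetic point: the curves in Example~\ref{Ex Lorentzian warped product} are required to have absolutely continuous components, so when you build the timelike curve $(\alpha,\sigma)$ in the reverse implication you should say explicitly that the constant-speed reparametrization of a rectifiable curve is Lipschitz (hence absolutely continuous), which you allude to but do not state.
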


\begin{Theorem}[\cite{Ale;Gra;Kun;Sa;cone} Proposition 4.10]\label{Th Warped product globally hyperbolic}
Let $(I\times _{f}X, D, \le, \ll, \tau)$ be a Lorentzian warped product. Assume that $(X, d)$ is a proper geodesic length space. Then, $(I\times _{f}X, D, \le, \ll, \tau)$ is globally hyperbolic.
\end{Theorem}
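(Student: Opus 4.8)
The plan is to verify directly the two conditions defining global hyperbolicity in Definition \ref{Def causality condition}(4): non-total imprisonment and compactness of causal diamonds. This is exactly \cite[Proposition 4.10]{Ale;Gra;Kun;Sa;cone}, and I would reproduce its argument as follows. Since a proper metric space is locally compact, Theorem \ref{Th warped product as LpLS} already gives that $I\times_{f}X$ is strongly causal, hence causal, so that for $p\le q$ with $p\neq q$ the $I$-components of $p$ and $q$ are strictly ordered, a fact I use freely. For non-total imprisonment I would take a compact $K\subseteq I\times_{f}X$, enclose its projection to $I$ in a compact interval $[a,b]\subseteq I$, and set $m:=\min_{[a,b]}f>0$; for a future-directed causal curve $\gamma=(\alpha,\beta)$ in $K$ the defining inequality $-\dot\alpha^{2}+(f\circ\alpha)^{2}\upsilon_{\beta}^{2}\le0$ together with the monotonicity of $\alpha$ gives $\upsilon_{\beta}\le\dot\alpha/(f\circ\alpha)\le\dot\alpha/m$ almost everywhere, so the $D$-speed of $\gamma$ satisfies $\sqrt{\dot\alpha^{2}+\upsilon_{\beta}^{2}}\le\sqrt{1+m^{-2}}\,\dot\alpha$, whence
\[
L_{D}(\gamma)\le\sqrt{1+m^{-2}}\int\dot\alpha\,dt\le\sqrt{1+m^{-2}}\,(b-a),
\]
a bound depending only on $K$.

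For compactness of the causal diamonds I would fix $p=(s_{1},x_{1})\le q=(s_{2},x_{2})$ with $p\neq q$ (the cases $p=q$ and $p\not\le q$ being trivial), so $s_{1}<s_{2}$ and $f$ has a positive minimum on $[s_{1},s_{2}]$. Any $z=(s,x)\in J(p,q)$ lies on a future-directed causal curve $(\alpha,\beta)$ from $p$ to $q$; reparametrizing it by its $I$-component (legitimate by \cite{Ale;Gra;Kun;Sa;cone}) and using $\upsilon_{\beta}\le1/(f\circ\alpha)$ yields $d(x_{1},x)\le\int_{s_{1}}^{s_{2}}ds'/f(s')=:\ell<\infty$, so $J(p,q)\subseteq[s_{1},s_{2}]\times\overline{B}_{d}(x_{1},\ell)$, which is compact since $(X,d)$ is proper. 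It then remains to check that $J(p,q)$ is closed: given $z_{n}\to z$ with $z_{n}\in J(p,q)$, I would choose future-directed causal curves $\gamma_{n}$ from $p$ to $q$ through $z_{n}$, graph-parametrized over $[s_{1},s_{2}]$, so that the $X$-components $\beta_{n}$ are $1/m$-Lipschitz with values in the compact ball $\overline{B}_{d}(x_{1},\ell)$; by the Arzel\`a--Ascoli theorem a subsequence converges uniformly to some $\beta_{\infty}$, hence $\gamma_{n}\to\gamma_{\infty}:=(\mathrm{id},\beta_{\infty})$ uniformly. Passing to the limit in $d(\beta_{n}(u),\beta_{n}(u'))\le\int_{u}^{u'}ds'/f(s')$ gives the same estimate for $\beta_{\infty}$, and differentiating recovers $\upsilon_{\beta_{\infty}}\le1/f$ almost everywhere, so $\gamma_{\infty}$ is a future-directed causal curve from $p$ to $q$ passing through $z$ (the latter because $\beta_{n}$ evaluated at the convergent $I$-coordinates of $z_{n}$ tends to both $x$ and $\beta_{\infty}(s)$), giving $z\in J(p,q)$.

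The hard part is this last limit-curve step: with $f$ merely continuous and $X$ only a proper geodesic length space there is no smooth structure available, and the metric speed is not stable under uniform limits, so the pointwise causal inequality cannot be passed to the limit directly. The remedy is to work instead with the integrated inequality $d(\beta(u),\beta(u'))\le\int_{u}^{u'}ds'/f$, which \emph{is} preserved under uniform convergence and which, upon differentiation, returns the pointwise bound; combined with the two structural facts imported from \cite{Ale;Gra;Kun;Sa;cone} --- that every causal curve in a warped product can be graph-parametrized over its $I$-component, and that this reparametrization preserves the defining causal inequality --- this completes the verification of both conditions.
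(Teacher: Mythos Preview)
The paper does not give its own proof of this statement; it is quoted verbatim as Proposition~4.10 of \cite{Ale;Gra;Kun;Sa;cone} and used as a black box in the proof of Theorem~\ref{Th W^n=V^n in warped product}. Your proposal is a faithful reconstruction of the argument from that reference: the causal inequality $-\dot\alpha^{2}+(f\circ\alpha)^{2}\upsilon_{\beta}^{2}\le0$ together with a lower bound on $f$ over the relevant $I$-interval yields both the uniform $D$-length bound (non-total imprisonment) and the containment $J(p,q)\subseteq[s_{1},s_{2}]\times\overline{B}_{d}(x_{1},\ell)$ (this is precisely Lemma~\ref{Lem causal diamond in warped product}); closedness then follows from an Arzel\`a--Ascoli limit-curve argument, where, as you correctly stress, one must pass the \emph{integrated} inequality $d(\beta(u),\beta(u'))\le\int_{u}^{u'}ds'/f(s')$ to the limit rather than the pointwise one. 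The only point worth flagging is that the ``causal curves'' in Definition~\ref{Def causality condition}(4)(i) are those of Definition~\ref{Def CausalTimelike curve}, not a~priori the differential ones of Example~\ref{Ex Lorentzian warped product}; the identification (locally Lipschitz plus the order relation forces the differential causal inequality) is exactly one of the ``structural facts'' you import from \cite{Ale;Gra;Kun;Sa;cone}, so your argument is complete once that is invoked. Since the present paper offers no proof to compare against, there is nothing further to contrast.
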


\begin{Corollary}[\cite{Ale;Gra;Kun;Sa;cone} Corollary 4.11]
Let $(I\times _{f}X, D, \le, \ll, \tau)$ be a Lorentzian warped product. Assume that $(X, d)$ is a locally compact, complete length space. Then, $(I\times _{f}X, D, \le, \ll, \tau)$ is globally hyperbolic.
\end{Corollary}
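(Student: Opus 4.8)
The plan is to deduce this corollary from Theorem \ref{Th Warped product globally hyperbolic} by upgrading the hypotheses imposed on the fiber $(X,d)$. Theorem \ref{Th Warped product globally hyperbolic} requires $(X,d)$ to be a \emph{proper geodesic} length space, whereas here we are only given that $(X,d)$ is a \emph{locally compact, complete} length space; hence the entire content of the argument is the observation that these two hypotheses are equivalent.

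First I would invoke the Hopf--Rinow theorem in the setting of length spaces (the Hopf--Rinow--Cohn-Vossen theorem; see e.g.\ \cite[Theorem 2.5.28]{Bra;Bra;Iva}): a locally compact, complete length space is \emph{proper}, i.e.\ every closed bounded subset is compact, and moreover it is \emph{geodesic}, i.e.\ any two of its points are joined by a length-minimizing curve. Thus the assumption that the length space $(X,d)$ is locally compact and complete is precisely the assumption that it is a proper geodesic length space, which is what Theorem \ref{Th Warped product globally hyperbolic} needs.

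With $(X,d)$ now recognized as a proper geodesic length space, Theorem \ref{Th Warped product globally hyperbolic} applies directly and yields that $(I\times_{f}X, D, \le, \ll, \tau)$ is globally hyperbolic, which is the assertion. The only point that requires a little care is citing the correct form of the Hopf--Rinow theorem for length spaces --- specifically, that local compactness together with completeness already forces properness, so that no separate properness hypothesis is needed --- but this is entirely standard. Consequently there is no genuine obstacle here: the statement is a true corollary, and the proof is just the chain ``locally compact $+$ complete length space $\Rightarrow$ proper geodesic length space $\Rightarrow$ globally hyperbolic warped product''.
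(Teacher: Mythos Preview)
Your argument is correct and is exactly the intended derivation: the paper does not supply its own proof of this corollary but merely quotes it from \cite{Ale;Gra;Kun;Sa;cone}, where it follows from the preceding proposition (Theorem~\ref{Th Warped product globally hyperbolic} here) via the Hopf--Rinow--Cohn-Vossen theorem, just as you outline.
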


\begin{Theorem}[\cite{Ale;Gra;Kun;Sa;cone} Theorem 5.3]\label{Th warped product timelike curvature bound1}
Let $K,K'\in\mathbb{R}$  and let $(X,d)$ be a geodesic length space with curvature bounded from below (above) by $K$. Then, $Y=I\times _{f}X$ has timelike curvature bounded from below (above) by $K'$ if $I\times _{f}M_{K}$ has timelike curvature bounded from below (above) by $K'$. 
  
\end{Theorem}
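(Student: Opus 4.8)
The plan is to reduce the timelike triangle comparison on $Y=I\times_f X$ to the corresponding comparison on the model warped product $I\times_f M_K$, and then to invoke the hypothesis that $I\times_f M_K$ already has the desired timelike curvature bound. The key structural fact, available from \cite{Ale;Gra;Kun;Sa;cone}, is that in a Lorentzian warped product the time separation between $(s,p)$ and $(t,q)$ depends on the fiber points only through $d_X(p,q)$: a maximizing causal curve $(\alpha,\beta)$ has, after reparametrization, a minimizing geodesic of $X$ as its fiber component $\beta$, since the length integrand $\sqrt{\dot\alpha^2-(f\circ\alpha)^2\upsilon_\beta^2}$ penalizes fiber speed. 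Thus there is a single function $\tau_f$ with $\tau_Y((s,p),(t,q))=\tau_f(s,t,d_X(p,q))$ and $\tau_{I\times_f M_K}((s,\bar p),(t,\bar q))=\tau_f(s,t,d_{M_K}(\bar p,\bar q))$, and $\tau_f(s,t,\cdot)$ is non-increasing. Since the two warped products share the same $\tau_f$, finiteness and continuity of $\tau$ on small sets, as well as the existence of maximizers (using that $X$ is a geodesic space), transfer to $Y$ from $I\times_f M_K$, so conditions (1) and (2) of Definition \ref{Def timelike curvature bound} are inherited.

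First I would fix $P=(s_0,p_0)\in Y$, take a comparison neighbourhood $V_{p_0}\subseteq X$ for the curvature bound $K$ (Alexandrov in the \emph{below} case, CAT in the \emph{above} case), a small interval $J_0\ni s_0$, and set $U_P\subseteq J_0\times V_{p_0}$, shrunk so that $\tau$ is finite and continuous on $U_P\times U_P$ and every causal geodesic with endpoints in $U_P$ stays inside. Given a timelike geodesic triangle $\Delta=(x,y,z)$ in $U_P$ realized by maximal causal curves and meeting the size bounds for $K'$, I would lift it to $I\times_f M_K$ by keeping all $I$-coordinates unchanged and replacing the fiber components of the three sides---minimizing geodesics of $X$ joining the projections $p_x,p_y,p_z$---by minimizing geodesics of $M_K$ of the same respective lengths, joining points $\bar p_x,\bar p_y,\bar p_z$ forming a comparison triangle of $(p_x,p_y,p_z)$ in $M_K$, matched by arc length. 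By the structural fact the lifted curves are causal with unchanged lengths, so $\bar\Delta=(\bar x,\bar y,\bar z)$ is a timelike geodesic triangle in $I\times_f M_K$ with the same side lengths as $\Delta$, again meeting the size bounds for $K'$.

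The heart of the argument is the comparison of corresponding points. A point $a$ on a side of $\Delta$ gets a well-defined image $\bar a$ on the corresponding side of $\bar\Delta$ (same $I$-coordinate; fiber component the point of the $M_K$-geodesic at the matching arc-length parameter), and this image is exactly the corresponding point in the sense of Definition \ref{Def timelike curvature bound}. For two such points $a,b$ on sides of $\Delta$, their fiber projections lie on sides of the geodesic triangle $(p_x,p_y,p_z)\subseteq X$, so the curvature bound $K$ on $X$ gives $d_X(\pi_X(a),\pi_X(b))\ge d_{M_K}(\pi_{M_K}(\bar a),\pi_{M_K}(\bar b))$ in the \emph{below} case and the reverse inequality in the \emph{above} case. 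Since $\tau_f(s,t,\cdot)$ is non-increasing and $a,b$ and $\bar a,\bar b$ carry matching $I$-coordinates, this yields $\tau_Y(a,b)\le\tau_{I\times_f M_K}(\bar a,\bar b)$ in the \emph{below} case and $\tau_Y(a,b)\ge\tau_{I\times_f M_K}(\bar a,\bar b)$ in the \emph{above} case. Finally, because $I\times_f M_K$ has timelike curvature bounded below (above) by $K'$, the triangle $\bar\Delta$ satisfies $\tau_{I\times_f M_K}(\bar a,\bar b)\le\bar\tau_{M_{K'}}$ of the corresponding points (resp. $\ge$); chaining the two inequalities gives $\tau_Y(a,b)\le\bar\tau_{M_{K'}}$ (resp. $\ge$), which is exactly the triangle comparison defining timelike curvature $\ge K'$ (resp. $\le K'$) for $Y$.

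The main obstacle I expect is making the corresponding-points step watertight. One must guarantee that the fiber projections $p_x,p_y,p_z$ are genuinely joined by \emph{minimizing} geodesics, so that the Alexandrov/CAT hypothesis on $X$ applies to $(p_x,p_y,p_z)$ verbatim---this is why one localizes in a comparison neighbourhood and uses the size bounds for $K'$; one must check that the lift is compatible with the notion of corresponding point (matching arc-length parameters on the fiber geodesics); and one must dispose of the degenerate configurations where a side of $\Delta$ is null or where $\pi_X$ collapses a side to a point, in which case the triangle is essentially two-dimensional and the comparison reduces to $I\times_f M_K$ directly. The underlying warped-product facts---existence and monotonicity of $\tau_f$---are quotable from \cite{Ale;Gra;Kun;Sa;cone}, and it is their careful use, rather than any single hard estimate, that drives the proof.
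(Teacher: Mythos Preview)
The paper does not prove this theorem; it is quoted without proof from \cite{Ale;Gra;Kun;Sa;cone}, Theorem~5.3, and invoked only as a black box (in Example~\ref{Ex Causal preserving map}), so there is no in-paper argument to compare against. Your sketch is nonetheless the correct strategy and matches the proof in the cited reference: the time separation in $I\times_f X$ depends on fiber points only through their $d_X$-distance and is non-increasing in that variable, so the Alexandrov/CAT inequality in $X$ transfers monotonically to the timelike comparison, after which the hypothesis on $I\times_f M_K$ finishes condition~(3) of Definition~\ref{Def timelike curvature bound}.
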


\begin{Theorem}[\cite{Ale;Gra;Kun;Sa;cone} Theorem 5.7]\label{Th warped product timelike curvature bound2}
If $X$ is a geodesic length space, $I\times _{f}X$ has timelike curvature bound below (above) by $K'$, and $I\times _{f}\mathbb{M}^{2}(K)$ has timelike curvature bound above (below) by $K'$, then $X$ has curvature bound below (above) by $K$.
\end{Theorem}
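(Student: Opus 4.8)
The plan is to prove this as the converse of Theorem~\ref{Th warped product timelike curvature bound1}: I would \emph{lift} geodesic triangles of $X$ to timelike geodesic triangles of $Y=I\times_{f}X$, and then read off an Alexandrov lower curvature bound for $X$ by chaining the two hypotheses — timelike curvature $\ge K'$ on $Y$ and timelike curvature $\le K'$ on $I\times_{f}\mathbb{M}^{2}(K)$ — through their common model space $M_{K'}$.

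The first and main ingredient is the lift. For a warped product $I\times_{f}N$, a curve $\gamma=(\alpha,\beta)$ is a maximal timelike geodesic exactly when $\beta$ is a reparametrized constant-speed geodesic of $N$ and $\alpha$ solves the first-order ODE coming from the conserved quantity of the warped Lorentzian length functional; along such a lift the time separation between two of its points, or between a point of it and a point of a second lift issuing from the same base point, depends on $N$ only through the speeds (equivalently the $N$-lengths) of the underlying base geodesics, together with $I$, $f$, and the chosen heights $\alpha$. Consequently, given a small geodesic triangle $\Delta(x_{0},x_{1},x_{2})$ in $X$ with side lengths $\ell_{0},\ell_{1},\ell_{2}$ and a comparison triangle $\bar\Delta$ in $\mathbb{M}^{2}(K)$ with the same side lengths, there is a common choice of heights making $\Delta$ lift to a timelike geodesic triangle in $Y$ and $\bar\Delta$ lift to one in $I\times_{f}\mathbb{M}^{2}(K)$ with identical $\tau$-side-lengths, hence sharing the same $M_{K'}$-comparison triangle. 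One checks en route that the realizability conditions of Lemma~\ref{Lem Realizablity} hold for all auxiliary triangles, which is arranged by shrinking the comparison neighborhood.

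With the lift in hand, the rest is bookkeeping. Applying timelike curvature $\ge K'$ on $Y$ to the lifted triangle gives $\tau_{Y}(\tilde p,\tilde q)\le\bar\tau(\overline{\tilde p},\overline{\tilde q})$ for lifted points on its sides, where $\bar\tau$ is the time separation of $M_{K'}$; applying timelike curvature $\le K'$ on $I\times_{f}\mathbb{M}^{2}(K)$ to the lift of $\bar\Delta$ — which has the same $M_{K'}$-comparison triangle — gives $\bar\tau(\overline{\tilde p},\overline{\tilde q})\le\tau_{I\times_{f}\mathbb{M}^{2}(K)}(\tilde p',\tilde q')$ for the corresponding lifted points, so $\tau_{Y}(\tilde p,\tilde q)\le\tau_{I\times_{f}\mathbb{M}^{2}(K)}(\tilde p',\tilde q')$. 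Now take $\tilde p$ the lift of a point $m$ on a side of $\Delta$ and $\tilde q$ the lift of the opposite vertex. By the lift construction, with heights fixed, $\tau_{Y}(\tilde m,\tilde x_{0})$ is a strictly decreasing function of $d_{X}(m,x_{0})$ and $\tau_{I\times_{f}\mathbb{M}^{2}(K)}(\tilde m',\tilde x_{0}')$ is the same strictly decreasing function of $\bar d_{K}(\bar m,\bar x_{0})$; the inequality above therefore forces $d_{X}(m,x_{0})\ge\bar d_{K}(\bar m,\bar x_{0})$, i.e.\ $\Delta$ is at least as fat as its $\mathbb{M}^{2}(K)$-comparison triangle. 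Since $\Delta$ was arbitrary, $X$ has curvature bounded below by $K$; the parenthetical statement follows by the time-dual argument with ``below'' and ``above'' interchanged.

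The hard part is the first ingredient: verifying that the lifted configurations are genuine timelike geodesic \emph{triangles} (maximality of the lifted sides), that their $\tau$-geometry factors through the base only via the $X$-side-lengths, and that the dependence of $\tau_{Y}$ of a lift on $d_{X}(p,q)$ is strictly monotone with a modulus good enough to convert the non-strict $\tau$-inequality back into a non-strict $d$-inequality. This is precisely where the structure of Lorentzian warped products is used — the first integral of the length functional, and the fact that the factor $X$ enters only through the metric derivative $\upsilon_{\beta}$; everything else is routine comparison-triangle manipulation.
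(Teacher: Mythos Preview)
The paper does not supply its own proof of this theorem; it is quoted without proof from \cite{Ale;Gra;Kun;Sa;cone}, Theorem~5.7, and is used in the present paper only as background (to justify the remark in Example~\ref{Ex Causal preserving map} that nonnegative timelike curvature of $\mathbb{R}\times X$ forces nonnegative Alexandrov curvature of $X$). There is therefore nothing in the paper to compare your argument against.

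That said, your outline is the correct strategy and is essentially the one carried out in \cite{Ale;Gra;Kun;Sa;cone}. The two structural facts you isolate are exactly what is needed: (i) in a warped product $I\times_{f}N$ the time separation between $(t_{1},p)$ and $(t_{2},q)$ is a function $\varphi(t_{1},t_{2},d_{N}(p,q))$ depending on $N$ only through $d_{N}(p,q)$, strictly decreasing in that argument on the causal range; and (ii) maximal timelike geodesics project to reparametrized minimizing geodesics of $N$, so that a geodesic triangle in $X$ and its $\mathbb{M}^{2}(K)$-comparison triangle lift, with a common choice of heights, to timelike triangles with identical $\tau$-side-lengths and hence a common $M_{K'}$-comparison triangle. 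Chaining the two curvature hypotheses through that common comparison triangle and then inverting $\varphi$ yields the point--side Alexandrov inequality in $X$, which suffices. Your caveats about localizing to comparison neighborhoods and checking the realizability bounds of Lemma~\ref{Lem Realizablity} are the right ones; in \cite{Ale;Gra;Kun;Sa;cone} these are handled by the explicit fibre-independence lemmas for warped products established earlier in that paper.
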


Next, to get an upper bound of the diameter of causal diamond in Lorentzian warped products, we see that a causal diamond $J((t_{0},\bar{p}),(t_{1},\bar{q}))$ in $Y=I\times _{f}X$ is included in a set whose slice at $t\in[t_{0},t_{1}]$ is the intersection of balls with centers $\bar{p}$ and $\bar{q}$.
\begin{Lemma}[\cite{Ale;Gra;Kun;Sa;cone} Lemma 4.1]\label{Lem causal diamond in warped product}
Let $(Y\coloneq I \times_{f}X, D, \le, \ll, \tau)$ be a Lorentzian warped product, and $p=(t_{0}, \bar{p}), q=(t_{1}, \bar{q})\in Y$. Then, for the causal diamond $J(p, q)=J^{+}(p)\cap J^{-}(q)$, we have
\[J(p, q) \subseteq \{(t, \bar{r})\in Y|t_{0}\le t\le t_{1}, \bar{r}\in \bar{B}^{d}_{\frac{t-t_{0}}{m_{t_{0},t}}}(\bar{p}) \cap \bar{B}^{d}_{\frac{t_{1}-t}{m_{t,t_{1}}}}(\bar{q}) \},\]
where $m_{t_{0}, t_{1}} \coloneq \displaystyle \min_{t_{0}\le t\le t_{1}}\{f(t)\} > 0$, and $\bar{B}^{d}_{\delta}(q) \coloneq \{p\in X\;| \;d(p,q)\le \delta\}$ (see \cite[Definition 3.9]{Ale;Gra;Kun;Sa;cone}).
\end{Lemma}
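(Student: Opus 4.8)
The plan is to unwind the definition of the causal relation on $Y=I\times_f X$ and turn the pointwise causality condition for a connecting curve into a bound on the $d$-length of its $X$-component. Fix a point $(t,\bar r)\in J(p,q)=J^+(p)\cap J^-(q)$, so that $p\le(t,\bar r)$ and $(t,\bar r)\le q$; in particular $p\le q$. If $(t,\bar r)$ coincides with $p$ or with $q$, both ball memberships follow from the estimate below applied to a future-directed causal curve realizing $p\le q$, so assume $(t,\bar r)\notin\{p,q\}$ and choose future-directed causal curves $\gamma_1=(\alpha_1,\beta_1)\colon[a,b]\to Y$ from $p$ to $(t,\bar r)$ and $\gamma_2=(\alpha_2,\beta_2)$ from $(t,\bar r)$ to $q$, where each $\alpha_i,\beta_i$ is absolutely continuous and each $\alpha_i$ is strictly increasing, with $\alpha_1(a)=t_0$ and $\alpha_1(b)=t$. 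Since the time components are strictly increasing we immediately get $t_0<t<t_1$, and $[t_0,t],[t,t_1]\subseteq I$ are compact, so $m_{t_0,t}=\min_{[t_0,t]}f>0$ and $m_{t,t_1}=\min_{[t,t_1]}f>0$.

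The key step is to estimate $\beta_1$. By the definition of a causal curve in the warped product, $-\dot\alpha_1^2+(f\circ\alpha_1)^2\upsilon_{\beta_1}^2\le 0$ almost everywhere, and since $\alpha_1$ is increasing and absolutely continuous, hence $\dot\alpha_1\ge 0$ a.e., this rearranges to $\upsilon_{\beta_1}\le\dot\alpha_1/(f\circ\alpha_1)$ a.e. Integrating, using $L_d(\beta_1)=\int_a^b\upsilon_{\beta_1}\,ds$, and changing variables along the absolutely continuous strictly increasing homeomorphism $\alpha_1\colon[a,b]\to[t_0,t]$ gives
\[d(\bar p,\bar r)\le L_d(\beta_1)\le\int_a^b\frac{\dot\alpha_1}{f\circ\alpha_1}\,ds=\int_{t_0}^{t}\frac{du}{f(u)}\le\frac{t-t_0}{m_{t_0,t}},\]
the last inequality because $f\ge m_{t_0,t}$ on $[t_0,t]$; hence $\bar r\in\bar B^d_{(t-t_0)/m_{t_0,t}}(\bar p)$. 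The identical computation for $\gamma_2$ yields $d(\bar r,\bar q)\le(t_1-t)/m_{t,t_1}$, i.e.\ $\bar r\in\bar B^d_{(t_1-t)/m_{t,t_1}}(\bar q)$. Combined with $t_0\le t\le t_1$ this places $(t,\bar r)$ in the set on the right-hand side, which is exactly the claimed inclusion.

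I expect the only subtle point to be the change-of-variables identity $\int_a^b(g\circ\alpha_1)\,\dot\alpha_1\,ds=\int_{t_0}^{t}g\,du$ for nonnegative measurable $g$, which is valid because a strictly monotone absolutely continuous function sends Lebesgue-null sets to null sets and therefore satisfies the area formula. Everything else is routine: strict monotonicity of the time components forces $t_0\le t\le t_1$, compactness of $[t_0,t]$ and $[t,t_1]$ makes $m_{t_0,t}$ and $m_{t,t_1}$ strictly positive, and $d(\beta(a),\beta(b))\le L_d(\beta)$ is the elementary length-distance inequality in $(X,d)$.
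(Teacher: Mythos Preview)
Your argument is correct and is the natural direct verification: unwind the causal relation via a connecting curve $(\alpha,\beta)$, use the pointwise inequality $(f\circ\alpha)\,\upsilon_\beta\le\dot\alpha$ a.e., integrate, change variables, and bound $1/f$ from above by $1/m$. The endpoint cases and the absolutely continuous change-of-variables are handled carefully. Note, however, that the paper does not give its own proof of this lemma; it simply quotes the statement from \cite[Lemma~4.1]{Ale;Gra;Kun;Sa;cone} and uses it as input for Theorem~\ref{Th W^n=V^n in warped product}. Your write-up is essentially the argument one finds in that reference, so there is nothing further to compare.
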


\begin{Theorem}\label{Th W^n=V^n in warped product}
Let $(I\times _{f}X, D, \le, \ll, \tau)$ be a Lorentzian warped product, and assume that $I \subseteq \mathbb{R}$ is an open interval and that $(X, d)$ is a proper geodesic length space. Then, $\mathcal{W}^{N}$ coincides with $\mathcal{V}^{N}$. Here, $\mathcal{W}^{N}$ and $\mathcal{V}^{N}$ are with respect to the product metric $D$.
\end{Theorem}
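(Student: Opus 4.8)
The plan is to prove $\mathcal{V}^{N}(A)\le\mathcal{W}^{N}(A)$ for every $A\subseteq Y:=I\times_{f}X$, since the reverse inequality $\mathcal{W}^{N}\le\mathcal{V}^{N}$ follows directly from the definitions. The mechanism is Lemma~\ref{Lem causal diamond in warped product}: it bounds a causal diamond $J(p,q)$, with $p=(t_{0},\bar p)$ and $q=(t_{1},\bar q)$, from outside by a set whose slice over each $t\in[t_{0},t_{1}]$ is an intersection of closed $d$-balls in $X$ of radii at most $(t_{1}-t_{0})/m_{t_{0},t_{1}}$, where $m_{t_{0},t_{1}}=\min_{[t_{0},t_{1}]}f>0$. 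Thus, as long as one stays inside a region where $f$ is bounded below, a bound on the $D$-distance between the two vertices of a diamond turns into a bound on its $D$-diameter with an explicit constant. Because $Y$ is in general non-compact and $\mathcal{W}^{N}$ need not even be a Borel measure, I would not reduce to compact sets directly but instead exhaust $A$ by relatively compact pieces.

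Concretely: fix $x_{0}\in X$, write $I$ as an increasing union of compact intervals $[a_{n},b_{n}]$ with $[a_{n},b_{n}]\subseteq(a_{n+1},b_{n+1})$, and note that the closed balls $\bar B^{d}(x_{0},n)$ (compact, since $X$ is proper) exhaust $X$; set $A_{n}:=A\cap\bigl([a_{n},b_{n}]\times\bar B^{d}(x_{0},n)\bigr)$, so $A_{n}\uparrow A$. Put $m_{n}:=\min_{[a_{n+1},b_{n+1}]}f>0$. The key step is a uniform diameter estimate at scale $n$: there exist $\epsilon_{n}>0$ (determined by $m_{n}$ and by the gap between $[a_{n},b_{n}]$ and $[a_{n+1},b_{n+1}]$) and $C_{n}<\infty$ such that every causal diamond $J(p,q)$ with $D(p,q)<\epsilon<\epsilon_{n}$ and $J(p,q)\cap A_{n}\neq\emptyset$ has both vertices in $[a_{n+1},b_{n+1}]\times\bar B^{d}(x_{0},n+1)$ and satisfies $\mathrm{diam}_{D}(J(p,q))\le C_{n}\epsilon$. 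Indeed, if $(s,\bar r)\in J(p,q)\cap A_{n}$ then $t_{0}\le s\le t_{1}$ with $t_{1}-t_{0}<\epsilon$, so for $\epsilon<\epsilon_{n}$ one has $t_{0},t_{1}\in[a_{n+1},b_{n+1}]$ and hence $m_{t_{0},t_{1}}\ge m_{n}$; by Lemma~\ref{Lem causal diamond in warped product} every slice of $J(p,q)$ lies in $\bar B^{d}_{\epsilon/m_{n}}(\bar p)\cap\bar B^{d}_{\epsilon/m_{n}}(\bar q)$, which pins $\bar p$ and $\bar q$ within distance $\epsilon/m_{n}$ of $\bar r$ (hence inside $\bar B^{d}(x_{0},n+1)$ when $\epsilon\le m_{n}$) and forces any two points of $J(p,q)$ to differ by at most $\epsilon$ in the $I$-coordinate and by at most $2\epsilon/m_{n}$ in the $X$-coordinate, so their $D$-distance is $\le C_{n}\epsilon$ with $C_{n}$ depending only on $m_{n}$.

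Finally I would assemble these estimates. Fix $n$ and $\epsilon\in(0,\epsilon_{n})$: given any covering of $A_{n}$ by causal diamonds with $D$-distance of vertices $<\epsilon$, discard those disjoint from $A_{n}$; the rest still covers $A_{n}$, and by the estimate each has $D$-diameter $\le C_{n}\epsilon<2C_{n}\epsilon$, so the family is admissible for $\mathcal{V}^{N}$ at scale $2C_{n}\epsilon$. Taking the infimum over such coverings gives $\mathcal{V}^{N}_{2C_{n}\epsilon}(A_{n})\le\mathcal{W}^{N}_{\epsilon}(A_{n})\le\mathcal{W}^{N}(A)$; letting $\epsilon\to0$ yields $\mathcal{V}^{N}(A_{n})\le\mathcal{W}^{N}(A)$, and letting $n\to\infty$, using that $\mathcal{V}^{N}$ is a Borel regular (metric) outer measure and hence continuous along increasing sequences of arbitrary sets, gives $\mathcal{V}^{N}(A)\le\mathcal{W}^{N}(A)$; combined with $\mathcal{W}^{N}\le\mathcal{V}^{N}$ this is the claim. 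The main obstacle is exactly the non-globality of the diameter bound: if $f$ has a narrow, deep dip, a $D$-short diamond can be $D$-wide, so the localization to a scale on which $f$ is bounded below is essential, and the crux is verifying that diamonds meeting $A_{n}$ with $D$-small vertices cannot escape that scale — this is the content of the estimate above. A secondary subtlety is the passage to the limit over the exhaustion, which uses Borel regularity of $\mathcal{V}^{N}$ (not available for $\mathcal{W}^{N}$ in general); as a byproduct, the argument also shows that $\mathcal{W}^{N}$ is a Borel measure on such Lorentzian warped products.
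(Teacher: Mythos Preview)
Your argument is correct and follows essentially the same route as the paper: both proofs use Lemma~\ref{Lem causal diamond in warped product} to convert a bound on $D(p,q)$ into a bound on $\mathrm{diam}_{D}(J(p,q))$ via $\min f$ on the relevant time interval, yielding $\mathcal{V}^{N}_{C\epsilon}\le\mathcal{W}^{N}_{\epsilon}$ at small scales. The only difference is organizational: the paper reduces at the outset to compact $A$ (invoking local compactness, second countability and Hausdorffness of $I\times_{f}X$), whereas you exhaust an arbitrary $A$ by the relatively compact pieces $A_{n}$ and pass to the limit using Borel regularity of $\mathcal{V}^{N}$; your version makes that limiting step more explicit, but the core estimate and constants are the same.
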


\begin{proof}
$\mathcal{W}^{N}\le \mathcal{V}^{N}$ follows from their definitions. Since $I\times_{f}X$ is locally compact, second countable, and Hausdorff, it is sufficient to prove the reverse inequality that 
\[\mathcal{V}^{N}(A)\le \mathcal{W}^{N}(A) \textrm{ for all compact $A\subseteq I\times _{f}X$}.\]
Let $A\subseteq I\times _{f}X$ be compact. We denote the projection to $I$ from $I\times_{f}X$ by $P_{t}$. Take $[a,b]$ as $P_{t}(A)\subset [a,b] \subset I$ and $\epsilon>0$ such that $[a-\epsilon, b+\epsilon]\subset I$. Let $m \coloneq \displaystyle \min_{a-\epsilon\le t\le b+\epsilon}\{f(t)\} > 0$. By Lemma 4.18, for any $p=(t_{0}, \bar{p}), q=(t_{1}, \bar{q})\in [a-\epsilon, b+\epsilon]\times X$ with $p\le q$, we have
 \[J(p, q) \subseteq \{(t, \bar{r})\in Y|t_{0}\le t\le t_{1}, \bar{r}\in \bar{B}^{d}_{\frac{t_{1}-t_{0}}{m}}(\bar{p}) \cap \bar{B}^{d}_{\frac{t_{1}-t_{0}}{m}}(\bar{q}) \}.\]
Therefore, it follows that
 \[\begin{aligned}
     \mathrm{diam}_{{D}}(J(p, q))&\le \sqrt{\biggl{(}d(\bar{p}, \bar{q}) + 2\frac{t_{1}-t_{0}}{m}\biggr{)}^{2}+(t_{1}-t_{0})^{2} }\\
                                &\le \sqrt{\biggl{(}D(p, q) + \frac{2}{m}D(p,q)\biggr{)}^{2}+D(p,q)^{2} } \\
                                &=D(p,q)\sqrt{\biggl{(}1+\frac{2}{m}\biggr{)}^{2}+1}.
\end{aligned}\]
Take any positive number $\delta\in (0,\epsilon)$ and any covering $\{J(a_{i}, b_{i})\}^{L}_{i=1}$ of $A$ such that
 \begin{description}
   \item{(i)} $D(a_{i}, b_{i}) \le \delta$
   \item{(ii)} $J(a_{i}, b_{i}) \cap A \not=\emptyset$.
 \end{description}
Then, $\mathrm{diam}_{{D}}(J(a_{i}, b_{i}))\le m'D(a_{i}, b_{i})$ holds for all $i=0,1,\dots, L$. Here, $m'\coloneq \sqrt{(1+\frac{2}{m})^{2}+1}$. Therefore we have $\mathcal{V}^{N}(A)\le \mathcal{W}^{N}(A).$
\end{proof}

\section{Examples of timelike Lipschitz maps and causality preserving maps}
In this section we will see some examples of timelike Lipschitz maps and causality preserving maps. In particular, we are interested in behavior of timelike extension maps. In Alexandrov spaces, the uniformity of Hausdorff dimension for every open subsets is shown by the volume comparison by extension maps (see \cite[Theorem 10.6.1]{Bra;Bra;Iva}). We examine timelike extension maps in Lorentzian products with timelike non-positive or non-negative, and will find that such map is either timelike Lipschitz or causality preserving.

\begin{Example}(Timelike Lipschitz maps)\label{Ex Timelike Lipschitz map}
Let $(X, d, \le, \ll, \tau)$ be a geodesic Lorentzian pre-length space with timelike curvature bounded from above by $0$ globally and timelike non-branching, i.e. any timelike geodesic does not have a branching point. Let $p\in X$ and $\lambda \in (0,1)$. For every point $q \in I^{+}(p)$, we take a timelike maximizer $\gamma_{p,q}$ from $p$ to $q$ and define $\lambda q$ as the point on $\gamma_{p,q}$ as such $\tau(p, \lambda q)=\lambda\tau(p,q)$. Let $\lambda I^{+}(p)=\bigl{\{} \lambda q \;|\;q\in I^{+}(p) \bigr{\}}$, and we endow $\lambda I^{+}(p)$ with a distance and causal structures by restriction. Therefore we can set a function $f_{\lambda, p}:\lambda I^{+}(p) \to I^{+}(p)$ by $f_{p}(\lambda q)=q$. Then, for $\lambda q_{1}$ and $\lambda q_{2}$ with $f(\lambda q_{1}) \ll f(\lambda q_{2})$, we have
\[\tau(f(\lambda q_{1}), f(\lambda q_{2}))\le \frac{1}{\lambda}\tau(\lambda q_{1}, \lambda q_{2})\]
thanks to the non-positive curvature \cite[Proposition 6.1]{Be;Kun;Rot}. When $f(\lambda q_{1}) \not\ll f(\lambda q_{2})$, the inequality above holds obviously. Therefore $f_{\lambda, p}:\lambda I^{+}(p) \to I^{+}(p)$ is a timelike $\frac{1}{\lambda}$-Lipschitz map. 
\end{Example}

To generalize this result to Lorentzian pre-length spaces with timelike curvature bounded from above by $k>0$ globally, we need to find a $\lambda$ dependent constant $C(\lambda)$ as follows. Let $\lambda \in (0,1)$, $\tilde{S}_1^2(k)$ be the model space with the constant curvature $k>0$, and $\Delta pqr$ be any timelike geodesic triangle in $\tilde{S}_1^2(k)$ with $p\ll q\ll r$ and sides $\gamma_{p,q}$, $\gamma_{q,r}$, and $\gamma_{p,r}$ of timelike geodesics. We set $\lambda q\in \gamma_{p,q}$ and $\lambda r\in \gamma_{p,r}$ such that $\tau(p,\lambda q)=\lambda\tau(p,q)$ and $\tau(p,\lambda r)=\lambda\tau(p,r)$. Then, when $\lambda q \ll \lambda r$,
\[C(\lambda)\tau(\lambda q, \lambda r)\ge\tau(q, r)\]
holds.

\begin{Example}(Causality preserving maps)\label{Ex Causal preserving map}
Let $(X,d)$ be a geodesic length space. Let $(\mathbb{R}\times X, D, \le, \ll, \tau)$ be the Lorentzian product, i.e. the warped product, $Y=\mathbb{R}\times X_{f}$ with $f\equiv 1$. Assume that $\mathbb{R}\times X$ has timelike curvature bounded from below by $0$. Let $p=(0, x)\in \mathbb{R}\times X$, and $q=(T, y)\in I^{+}(p)$. When $x\not=y$, we can take a geodesic from $p$ to $q$, $\gamma_{p,q}:[0, 1]\to \mathbb{R}\times X$ such as $\gamma_{p,q}(t)=(tT, \Gamma_{x, y}(td(x,y)))$. Here, $\Gamma_{x, y}:[0,d(x, y)]\to X$ is a geodesic in $X$ from $x$ to $y$ with unit speed. When $x=y$, we can take a geodesic from $p$ to $q$, $\gamma_{(p,q)}:[0, 1]\to \mathbb{R}\times X$ as $\gamma_{p,q}(t)=(tT, x)$. 

Let $\lambda\in (0,1)$, and we set $\lambda I^{+}(p)=\{\gamma_{(p,q)}(\lambda)\;|\; q=(T,y)\in I^{+}(p), \;T>0\}$. We denote $\gamma_{(p,q)}(\lambda)$ by $\lambda q$ for $q\in I^{+}(p)$. Recall from Theorem \ref{Th warped product timelike curvature bound1} and \ref{Th warped product timelike curvature bound2} that when $\mathbb{R}\times X$ has timelike curvature bounded from below by $0$, $X$ has curvature bounded from below by $0$. Therefore, $X$ does not have any branching, and we can set a function $g_{\lambda, p}:\lambda I^{+}(p)\to X$ as $g_{\lambda ,p}(\lambda q)=q$. Here, we endow $\lambda I^{+}(p)$ with a distance and causal structures by restriction, $(\lambda I^{+}(p), D \vert_{\lambda I^{+}(p)}, {\le}\vert_{\lambda I^{+}(p)}, {\ll}\vert_{\lambda I^{+}(p)}, \tau\vert_{\lambda I^{+}(p)})$. Then, $g_{\lambda, p}$ is a causality preserving map from $\lambda I^{+}(p)$ to $X$. 

Take $\lambda q_{1},\lambda q_{2}\in \lambda I^{+}(p)$. Write $q_{1}=(T_{1}, y_{1})$ and $q_{2}=(T_{2}. y_{2})$, and we see $\lambda q_{1}=(\lambda T_{1}, \Gamma_{x,y_{1}}(\lambda d(x,y_{1})))$ and $\lambda q_{2}=(\lambda T_{2}, \Gamma_{x,y_{2}}(\lambda d(x,y_{2})))$.  Assume $\lambda q_{1}\le \lambda q_{2}$ i.e. $\lambda T_{2}-\lambda T_{1} \ge d(\Gamma_{p,y_{1}}(\lambda d(x,y_{1})),\Gamma_{p,y_{2}}(\lambda d(x,y_{2})))$. Then, we have
\[\begin{aligned}
  T_{2}-T_{1} &\ge \frac{d(\Gamma_{p,y_{1}}(\lambda d(x,y_{1})),\Gamma_{p,y_{2}}(\lambda d(x,y_{2})))}{\lambda} \\
              &\ge d(\Gamma_{p,y_{1}}(d(x,y_{1})),\Gamma_{p,y_{2}}(d(x,y_{2}))) \\
              &=d(q_{1}, q_{2}).
\end{aligned}\]
It implies $q_{1}\le q_{2}$. The second inequality above holds since $X$ has curvature bounded from below by $0$.

\end{Example}

In the case of Lorentzian products with timelike curvature bounded from below by $k<0$, with a constant $R>\max{(d(p,y_{1}), d(p,y_{2}))}$, we have 
\[d(\Gamma_{p,y_{1}}(\lambda d(p,y_{1})),\Gamma_{p,y_{2}}(\lambda d(p,y_{2})))\ge C(-k,\lambda, R)d(\Gamma_{p,y_{1}}(d(p,y_{1}))),\Gamma_{p,y_{2}}(d(p,y_{2}))),\]
where $C(-k, \lambda, R)=\frac{\sinh(-k\lambda R)}{\sinh(-kR)}$. Then, since $\frac{C(-k,\lambda, R)}{\lambda}<1\;(\lambda\in(0,1))$, we can see that $g_{\lambda,p}$ does not necessarily preserve causality.

\section{Outlook} 
We mention further researches related to this work. As we saw in Section 5, it seems not easy to construct a timelike Lipschitz map which is also causality preserving. Thus, we would need some improvements of our volume comparison inequalities. For example, it is desirable to find volume comparison inequality for merely for timelike Lipschitz maps. Another direction of further researches is to apply the volume comparison inequalities to the theory of convergence and collapsing of spacetimes. Moreover, in view of constructing local charts on Lorentzian pre-length spaces, we could work on evaluating the timelike Hausdorff dimension of the set of singular points. The properties of timelike Hausdorff measure are also our interest, for example the density of timelike Hausdorff measure and area-coarea inequalities for timelike Hausdorff measure on Lorentzian pre-length spaces.

\end{document}